\title{Backward theory supports modelling via invariant manifolds for non-autonomous dynamical systems}
\author{A.~J. Roberts\thanks{School of Mathematical Sciences, University of Adelaide, South Australia 5005.
\protect\url{mailto:profajroberts@protonmail.com}
\quad Also \protect\url{http://orcid.org/0000-0001-8930-1552}}}
\date{31 May 2022}
\newenvironment{full}{}{}
\crefname{equation}{}{} 
\Crefname{assumption}{Assumption}{Assumptions} 
\let\eqref\cref
\pgfplotsset{compat=newest}
\renewcommand{\vec}[1]{\text{\boldmath$#1$}}
\newcommand{\Z}[1]{e^{#1t}{\star}}
\newcommand{\sgn}{\operatorname{sgn}}
\newcommand{\res}{\operatorname{Res}}
\newcommand{\Orc}[1]{\ensuremath{\mathbb O(#1)}}
\def\ov{\ensuremath{\vec 0}}
\def\cond{\operatorname{cond}}
\def\diag{\operatorname{diag}}
\def\tr#1{#1^{\sf T}}
\def\ou\big(#1,#2,#3\big)%
\newcommand{\Lip}{\operatorname{Lip}}
\newcommand{\muc}{\ensuremath{\tilde\mu}}
\numberwithin{equation}{section}
\begin{document}

\maketitle


\begin{abstract}
In the area of invariant\slash integral manifolds for non-autonomous dynamical systems, this article establishes the foundation for a theory that complements extant theory. 
Current rigorous support for dimensional reduction modelling of slow-fast systems is limited by rare events in stochastic systems that may cause escape, and limited in many applications by restrictions on the nature of \pde\ operators.
To circumvent such limitations, we initiate developing a backward theory of invariant\slash integral manifolds that complements extant  theory. 
Here, for deterministic non-autonomous \ode\ systems, we construct a conjugacy with a normal form system to establish the existence, emergence and exact construction of center manifolds in a finite domain for systems `arbitrarily close' to that specified.
A benefit is that the constructed invariant manifolds are known to be exact for systems `close' to the one specified, and hence the only error is in determining how close over the domain of interest for any specific application.
Built on the base developed here, planned future research should develop a theory for stochastic and/or \pde\ systems that is useful in a wide range of modelling applications.
\end{abstract}

%

\begin{full}
\tableofcontents
\end{full}

\section{Introduction}


Centre manifold theory provides an excellent framework to justify and construct reduced low-dimensional models of some specified high-dimensional dynamical system \cite[e.g.,][]{Roberts2014a}, complete with correct treatment of initial conditions, forcing and uncertainty \cite[e.g.,][]{Roberts89b}, and giving correct boundary conditions for homogenised\slash slowly-varying \pde\ models \cite[e.g.,][]{Roberts92c}.
Here we establish a first step in complementing extant theory with a new backward theory applicable to quite general non-autonomous systems and to systems that have center, stable and unstable dynamics---other backward theory has proved very useful in other contexts~\cite[e.g.,][]{Grcar2011, Ghosh2018}.
Indeed, \cite{Corless2019} comment that backward error analysis ``\textsc{bea} has become a general method fruitfully applied to problems \ldots\ This is hardly a surprise when one considers that \textsc{bea} offers several interesting advantages over a purely forward-error approach''.
A further step in this project is the start on corresponding backward theory for \(\infty\)-D systems recently established by \cite{Hochs2019}.
But for the more accessible finite-D systems addressed herein we present arguments extended to non-self-adjoint systems (\cref{sec:stsenfsmsde}), and provide an unprecedented lower bound on the finite domain of validity (\cref{sec:nfeep}).


The theory developed complements existing theory by addressing the issues from a different direction and thereby empowering new views.
For example, whereas \cite{Bates2008} [p.1] prove that ``if the given manifold is approximately invariant and approximately normally hyperbolic, then the dynamical system has a true invariant manifold nearby'' \cite[also][]{Henry81, Aulbach82}.  
Such theory is called a \emph{forward theory} because it proves for a given system there is a result, here an invariant manifold, that is approximated to some reasonable error. 
In contrast, a \emph{backwards theory}, such as that establiashed in\cref{sec:nfeep}, proves that ``if a constructed manifold is \ldots, then it is an exact invariant manifold for a dynamical system which approximates the given system to some reasonable error''.
\cite{Grcar2011} usefully overviews, discusses, and compares familiar forward and backward theorems \text{in linear algebra.}

Of course, forward theory will continue to be important in its rigorous results about many dynamical systems.
The aim of this research program is to \emph{complement} forward theory: to complement it by a backwards theory that establishes slightly different results for a different range of scenarios, and hence results which certify practically useful properties for a wider range of physical applications.
The backward theory here is to encompass center manifolds of more than just a few dimensions.
Consequently numerical manifold continuation methods \cite[e.g.,][]{auto01, England2007, Ziessler2018}, albeit valuable for some low-D manifolds, are far from practical over the entire \text{scope addressed here.}

There is a further pronounced difference in the complementary approach developed here.
We invoke Lagrange's remainder theorem to put lower bounds on the size of the domain of validity (\cref{lem:nfftd}). 
Such general finite bounds have not been identified before because most approaches invoke `$\epsilon\to0$' or `\(\epsilon\)-neighbourhood' theorems.
The only cognate results are by \cite{Lamarque2011} who developed upper bounds on the domain of validity for normal forms of  nonlinear normal modes in finite-D autonomous systems, and by \cite{Hooton2020} who estimate the length of branches of periodic orbits in a {Hopf} bifurcation.
In essence, we invoke a slightly extended normal form coordinate transform (\cref{sec:stsenfsmsde}) and use it from a new point of view.
In the context of normal forms, \cite{Lamarque2011} commented 
\begin{quote}
The main recognised drawback of perturbation methods is the absence of a criterion establishing their range of validity in terms of amplitude.
\end{quote} 
Here the backward approach provides new useful results on the range of validity---the lower bound of \cref{lem:nfftd}.
\cref{cor:stbackex,pro:stbackem} summarise the resultant theory including the finite size of the range.

\begin{full}
The normal form coordinate transformation of this backward approach also illuminates apparent paradoxes.
\cref{sec:asfss} discusses, through a key example \cite[]{Lorenz86, Lorenz87}, how the question of whether a slow manifold exists or not in geophysical fluid dynamics may be answered by arguing that for every given slow-fast system in the relevant class, there is an arbitrarily close system that possesses, in a finite domain, a slow manifold of interest.
This existence of a nearby system with a slow manifold is irrespective of whether or not the given system has a slow manifold.
This example is just one case where the original system does not have useful invariant manifolds, but the constructed approximate systems do possess such invariant manifolds, \emph{and} where the invariant manifolds are vital guiding\slash organising concept in the application \cite[e.g.,][]{Leith80}. 
\end{full}


\begin{example}\label{egUdwadia2022}
\cite{Udwadia2022} explored how to control a nonhyperbolic system---the discussion may also be viewed as a backwards approach.
The \emph{Numerical Example~1} by \cite{Udwadia2022} is the system
\begin{equation}
\dot x=xy+\tfrac12x^3+xy^2,
\quad
\dot y=-y-2x^2+x^2y\,.
\label{eqUdwadia1}
\end{equation}
One may straightforwardly construct \cite[e.g.,][]{Carr81} that the system~\eqref{eqUdwadia1} has an approximate slow manifold \(y=-2x^2+\Ord{x^4}\) on which the approximate evolution is \(\dot x=-\tfrac32 x^3+\Ord{x^5}\).
\cite{Udwadia2022} then discussed that one could apply a \emph{small} control~\(w:=8x^4(1-2x^2)\) to the system~\eqref{eqUdwadia1}---remarkably applied to the slaved mode~\(y\) not the principal mode~\(x\)---namely
\begin{equation}
\dot x=xy+\tfrac12x^3+xy^2,
\quad
\dot y=-y-2x^2+x^2y+w,
\label{eqUdwadia2}
\end{equation}
Then the `closed-loop' controlled system~\eqref{eqUdwadia2} has an exact slow manifold \(y=-2x^2\) with the exact evolution \(\dot x=-\tfrac32x^3+4x^5\).
\cite{Udwadia2022} comments that such exact results are useful in the reliable control of systems.
Equivalently, this is an example of the backwards approach developed herein: for a given system, such as~\eqref{eqUdwadia1}, our backwards  approach constructs a nearby system, such as~\eqref{eqUdwadia2}, which possesses an exact invariant manifold, such as \(y=-2x^2\), with its exactly known evolution. 
\end{example}

Akin to the comments by  \cite{Lamarque2011}, \cite{Udwadia2022} goes on (p.75) to comment that ``Determining the basin of attraction \ldots\ is generally difficult''.   Here, \cref{lem:nfftd} is an unprecedented lower bound on the size of the basin of attraction, and contributes to answering this identified general difficulty.

\paragraph{Finite time theory for non-autonomous systems}
The topic of finite time phenomena and associated invariant sets and manifolds are increasingly important in understanding coherent structures, mass transport, and metastability in non-autonomous systems \cite[e.g.,][]{Haller2000, Froyland2010, Froyland2013b, Balasuriya2016}.
A crucial reformation to empowering backward theory to usefully address such non-autonomous systems is to modify the definition of key invariant manifolds (\cref{def:nfnfim}).
The classic definition of un\slash stable and center manifolds requires the existence of solutions and their limits as time goes to~\(\pm\infty\) \cite[e.g.,][]{Henry81, Potzsche2006, Haragus2011, Barreira2007, Aulbach2006}.
This in turn requires solutions of the dynamical system to be reasonably well-behaved for all time, which places strong constraints on the systems that can be studied---constraints that in applications are often not available, or are hard to establish.
For example, in stochastic systems very rare events will eventually happen in infinite time, requiring global Lipschitz and boundedness that are oppressive in applications.
By modifying the classic definitions we establish results for finite times, which are useful in many applications, and for a usefully wide range of systems, as indicated by \cite{Wiggins2005} [p.314]:
\begin{quote}
finite time, this poses some severe problems with applying dynamical systems-type ideas because dynamical systems theory is often described as the study of the `long time behavior' of a system. 
The mathematical definitions of hyperbolic trajectories, stable and unstable manifolds of hyperbolic trajectories, KAM tori, and chaos are inherently `infinite time' in nature.  
If we have a flow field that is only known for a finite time, and is aperiodic in time so that no inference about the behavior outside the known time interval can be made, how can we possibly proceed with a dynamical systems analysis? 
This is an area \text{of continuing research,}
\end{quote}
This article makes a significant contribution towards resolving such ``severe problems'' of ``dynamical systems analysis''.



The problems indicated by \cite{Wiggins2005} are ongoing.
For example, \cite{Duc2008} commented that ``motivated by applications in fluid mechanics \ldots\ One of the central questions is whether hyperbolicity and invariant manifolds have any useful meaning for a finite time.''
And recently, \cite{Newman2021} 
\begin{quote}
present a stabilization phenomenon 
\ldots
the traditional approaches to theoretical stability analysis such as 
\ldots
infinite-time approaches cannot generally be applied to describe the stabilization phenomenon
\ldots
the phenomenon should be understood in terms of finite-time stability,
\ldots
whose necessity in the study of various nonlinear systems, such as climate and biological systems, is increasingly being recognized. 
\end{quote}
The novel backwards theory established in \cref{sec:nfeep} encompasses finite-time analysis of non-autonomous dynamical systems---to complement extant forward theory which dominantly requires infinite-time.

\subsection{General scenario}
In this article we consider dynamical systems for dependent variables $\uv (t)\in\RR^{m+n+\ell}$ in the class
\begin{equation}
\dot\uv=\cL\uv+\cN(t,\uv)
\label{eq:phys}
\end{equation}
for linear operator~\cL\ and strictly nonlinear \(\cC^\fp\)-function~\cN.
Of course there is a considerable body of forward theory that applies to finite-D non-autonomous systems in this form \cite[e.g.,][]{Henry81, Knobloch82, Potzsche2006, Haragus2011}---the import of reconsidering this class of systems is as the starting point for developing a new backward theory.
Interestingly, while this article was under review, \cite{WenleiLi2022} developed a backwards theorem for center manifolds (their Theorem~5.1) but restricted only to existence, only to zero real-part eigenvalues, only to autonomous centre-stable systems, and only in some neighbourhood of the origin.
The backwards theory established in \cref{sec:nfeep} gives many extra significant results: an example being the new lower bound on the size of the applicable domain (\cref{lem:nfftd}).

In applications, a coordinate independent approach, developed from that by \cite{Coullet83}, provides an efficient direct asymptotic construction of specific chosen invariant manifolds for systems in the form~\cref{eq:phys} \cite[Ch.~5, e.g.]{Roberts2014a}.
The procedures of \cref{sec:stsenfsmsde} are not intended to be a practical construction, they are intended to be a tool for developing the \text{theory of \cref{sec:nfeep}.}

For the purposes of developing the theory, let's assume a preliminary linear change of basis separates the physical system~\cref{eq:phys} into variables $\xv(t)\in\RR^m$, $\yv(t)\in\RR^n$, and~$\zv(t)\in\RR^\ell$, whence the system becomes
\begin{subequations}\label{eqs:stsesde}%
\begin{align}
    \dot{\xv}&=A\xv+\fv(t,\xv,\yv,\zv),
    \label{eq:stsesdex}\\
    \dot{\yv}&=B\yv+\gv(t,\xv,\yv,\zv),
    \label{eq:stsesdey}\\
    \dot{\zv}&=C\zv+\hv(t,\xv,\yv,\zv),
    \label{eq:stsesdez}
\end{align}
\end{subequations}
and where \cref{ass:givensys} details conditions to be satisfied by matrices~\(A,B,C\) and nonlinear functions~\(\fv,\gv,\hv\).
The conditions identify that, about the origin, \xv~are center variables, \yv~are fast stable variables, and \zv~are fast unstable variables (\cref{def:ocf}).


Recall that in applications of mathematics one never knows the precise equations that govern a given physical system.
As \cite{Feynman98} [p.2] wrote: ``everything we know is only some kind of approximation, because \emph{we know that we do not know all the laws} as yet.''
Consequently, a differential equation system that is `close enough' to the originally specified mathematical system~\cref{eqs:stsesde} may be as equally a valid description of the modelled physical system.
Hence, this article focusses on establishing exact properties for systems `close' to the original~\cref{eqs:stsesde}  (akin to \cref{egUdwadia2022}).
The following \cref{cor:stbackex} is a first such backwards property.
Such backward propositions are useful in other mathematical areas \cite[e.g.,][]{Grcar2011}.

The following two new propositions of practical importance are established via \cref{thm:stsenfp,thm:nfimex,lem:nfftd,cor:nfeom,lem:nfemerge}.

\begin{subequations}\label{eqs:stbackex}
\begin{proposition}[existence] \label{cor:stbackex}
For every non-autonomous system~\cref{eqs:stsesde} satisfying \cref{ass:givensys} there exists nearby systems (such as \cref{eq:stsexform}+\cref{eqs:stsesdenf}), asymptotically close to any specified order (\cref{def:ocf}) limited by the spectral gaps (between the eigenvalues of~\(A\) and of~\(B\) and~\(C\), \cref{def:nftpd}) and the smoothness of~\fv, \gv\ and~\hv\ (\cref{thm:stsenfp}), that in a finite domain \((t,\Xv)\in\TT_\mu\otimes d_\mu\) (\cref{def:nfnfim}) possesses a smooth non-autonomous center manifold~\(\cM_c\) (\cref{def:nfnfim}) parametrised as%
\footnote{Throughout this paper, the context distinguishes between~\(\xv(t,\Xv)\) of~\cref{eq:stbexsm} in a parametrisation of a center manifold, and~\(\xv(t)\) in a solution to the dynamical system~\cref{eqs:stsesde}, and similarly for~\yv\ and~\zv.}
\begin{equation}
\xv=\xv(t,\Xv),\quad 
\yv=\yv(t,\Xv) \quad\text{and}\quad 
\zv=\zv(t,\Xv)
\label{eq:stbexsm}
\end{equation}
where \(\xv(t,\Xv)=\Xv+\Ord{|\Xv|^2}\) is a near identity (\cref{thm:nfimex}).
On~\(\cM_c\) the evolution is of the form (\cref{cor:nfeom})
\begin{equation}
    \dot{\Xv}= A\Xv+\Fv_c (t,\Xv),
\label{eq:stbexsme}
\end{equation}
for some smooth nonlinear~\(\Fv_c\).
\end{proposition}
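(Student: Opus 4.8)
The plan is to assemble \cref{cor:stbackex} from its constituent results in the order the construction naturally unfolds: pass to normal-form coordinates, read off the trivially invariant manifold there, transport it back to the original $(\xv,\yv,\zv)$ variables, and finally certify a finite domain on which that transport is a genuine diffeomorphism. First I would invoke \cref{thm:stsenfp} to produce the time-dependent near-identity coordinate transform relating \cref{eqs:stsesde} to the normal-form system \cref{eq:stsexform}+\cref{eqs:stsesdenf}, asymptotically close to an order bounded below by the spectral gap (\cref{def:nftpd}) between $\operatorname{spec}(A)$ and $\operatorname{spec}(B)\cup\operatorname{spec}(C)$ and by the smoothness order~\fp\ of \fv, \gv\ and~\hv. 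The point of the normal form is that the fast variables $\Yv$ and~$\Zv$ occur in their own evolution equations only multiplied by themselves, so that $\{\Yv=\ov,\ \Zv=\ov\}$ is an \emph{exactly} invariant set; here the strictly-nonlinear hypothesis of \cref{ass:givensys} is what makes the homological equations solvable and the transform genuinely near identity.

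Next I would apply \cref{thm:nfimex}: the image of the flat plane $\{\Yv=\Zv=\ov\}$ under the inverse transform is the non-autonomous manifold $\cM_c$, and substituting $\Yv=\Zv=\ov$ into the near-identity transform yields exactly the parametrisation~\cref{eq:stbexsm} with $\xv(t,\Xv)=\Xv+\Ord{|\Xv|^2}$, because the transform is near identity in the surviving center coordinate~$\Xv$. Invariance of~$\cM_c$ for the nearby system is then inherited from invariance of $\{\Yv=\Zv=\ov\}$, since the transform conjugates the two flows. Restricting the normal-form center equation to $\Yv=\Zv=\ov$ leaves an evolution for~$\Xv$ whose linear part is untouched, i.e.\ of the form~\cref{eq:stbexsme} with $\Fv_c$ smooth and strictly nonlinear; this is \cref{cor:nfeom}. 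Finally, because the transform of \cref{thm:stsenfp} is built from finitely many corrections with a Lagrange-type remainder, \cref{lem:nfftd} together with \cref{def:nfnfim} supplies the concrete time interval and amplitude ball $(t,\Xv)\in\TT_\mu\otimes d_\mu$ on which the transform is a diffeomorphism and all the above holds; on that set the nearby system (the original~\cref{eqs:stsesde} with the truncation remainder folded into its right-hand side) has $\cM_c$ as a bona fide center manifold. Emergence, needed for \cref{pro:stbackem}, comes separately from \cref{lem:nfemerge} and is not required here.

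The hard part will not be the algebra but the quantitative control underlying the last step: obtaining a domain $\TT_\mu\otimes d_\mu$ of guaranteed, non-infinitesimal size. This requires bounding the normal-form remainder \emph{uniformly in~$t$} --- a genuinely non-autonomous estimate rather than an `$\epsilon\to0$' statement --- and, because $A$, $B$ and~$C$ here need not be self-adjoint, handling oblique spectral projections whose norms introduce condition-number factors; keeping the Lagrange-remainder estimate of \cref{lem:nfftd} explicit through those factors is the crux. A secondary point to check is that the perturbed system still meets the structural hypotheses of \cref{ass:givensys} --- in particular that $\cN$ remains $\cC^\fp$ and strictly nonlinear after the remainder is absorbed --- so that the conclusion is internally consistent with the hypothesis under which it is stated.
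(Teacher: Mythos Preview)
Your proposal is correct and mirrors the paper's own route: \cref{cor:stbackex} is not given a standalone proof but is assembled exactly as you describe, by invoking \cref{thm:stsenfp} for the normal form, reading off the flat invariant set $\{\Yv=\Zv=\ov\}$ via \cref{def:nfnfim}, transporting it back through~\cref{eq:stsexform} as in \cref{thm:nfimex}, and obtaining the on-manifold evolution from \cref{cor:nfeom}. One small attribution to tighten: the finite domain $d_\mu$ on which the transform is a diffeomorphism comes from \cref{def:nfdd} (with $D_\mu\subseteq\DD_\mu$ from \cref{def:nftpd}), whereas \cref{lem:nfftd} bounds the size of $\DD_\mu\cap\MM_{cs}$ and is really used for the emergence statement \cref{pro:stbackem}; also note that~\cref{eq:stsexform} is written as the map $(\Xv,\Yv,\Zv)\mapsto(\xv,\yv,\zv)$, so $\cM_c$ is the \emph{forward} image of $\{\Yv=\Zv=\ov\}$, not the image under the inverse.
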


The next proposition characterises the exponentially quick decay towards such a centre manifold---its emergence from many initial conditions.

\begin{proposition}[emergence] \label{pro:stbackem}
For every non-autonomous system~\cref{eqs:stsesde} satisfying \cref{ass:givensys} and for every chosen rate~\(\mu\) in the spectral gap, a non-autonomous center manifold of a nearby system of \cref{cor:stbackex} attracts at the rate~\(\mu\) all of its solutions in a finite domain~\(d_\mu\) of its center-stable manifold~\(\cM_{cs}\) (\cref{lem:nfemerge}). 
That is, for all solutions~\(\big(\xv(t),\yv(t),\zv(t)\big)\) (of~\cref{eq:stsexform}+\cref{eqs:stsesdenf}) that lie in~\(\cM_{cs}\) there exists a solution~\(\Xv(t)\) to~\cref{eq:stbexsme} and a constant~\(C\) (given quantitatively by~\cref{eq:bigC}) such that
\begin{equation}
\left|\big(\xv(t),\yv(t),\zv(t)\big)
-\big(\xv(t,\Xv(t)),\yv(t,\Xv(t)),\zv(t,\Xv(t))\big)\right|
\leq Ce^{-\mu t}
\label{eq:stbackem}
\end{equation} 
for times \(t_0\leq t<T_\mu\), a time interval \([t_0,T_\mu)\subseteq \TT_\mu\) for which both solutions \(\big(\xv(t),\yv(t),\zv(t)\big)\) and \(\big(\xv(t,\Xv(t)),\yv(t,\Xv(t)),\zv(t,\Xv(t))\big)\) remain in~\(d_\mu\).
\end{proposition}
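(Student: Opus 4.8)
The plan is to push everything through the conjugacy of \cref{thm:stsenfp}: work in the normal-form coordinates $(\Xv,\Yv,\Zv)$, where the nearby system \cref{eq:stsexform}+\cref{eqs:stsesdenf} becomes a normal form with three structural features. First, the $\Xv$-equation decouples, being independent of $(\Yv,\Zv)$, and coincides with the reduced evolution \cref{eq:stbexsme}, $\dot\Xv=A\Xv+\Fv_c(t,\Xv)$ (\cref{cor:nfeom}). Second, in these coordinates the center manifold is the flat invariant set $\cM_c=\{\Yv=\ov,\ \Zv=\ov\}$ and the center--stable manifold is $\cM_{cs}=\{\Zv=\ov\}$, so that the parametrisation \cref{eq:stbexsm} is just the inverse coordinate transform applied to $(\Xv,\ov,\ov)$, and a solution of the nearby system lying in $\cM_{cs}$ corresponds to a normal-form solution with $\Zv\equiv\ov$. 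Third, since the nonlinearities are strictly nonlinear and $\{\Yv=\ov\}$ is invariant, Hadamard's lemma factors the $\Yv$-equation on $\cM_{cs}$ as $\dot\Yv=\bigl(B+\mathcal{B}(t,\Xv,\Yv)\bigr)\Yv$ with $\mathcal{B}(t,\Xv,\Yv)=\Ord{|\Xv|+|\Yv|}$, hence small throughout the finite domain~$d_\mu$ of \cref{def:nfnfim}.

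Given a solution lying in $\cM_{cs}$, I would write it in normal-form coordinates as $\bigl(\Xv(t),\Yv(t),\ov\bigr)$ for $t\in[t_0,T_\mu)$, this being the maximal interval on which $(\Xv(t),\Yv(t))\in d_\mu$, and then choose the comparison trajectory to be the solution of \cref{eq:stbexsme} with value $\Xv(t_0)$ at~$t_0$. Because the $\Xv$-dynamics does not see $(\Yv,\Zv)$, that solution is \emph{exactly} $\Xv(t)$, so in normal-form coordinates the comparison point on $\cM_c$ is $\bigl(\Xv(t),\ov,\ov\bigr)$, and in the original variables it is $\bigl(\xv(t,\Xv(t)),\yv(t,\Xv(t)),\zv(t,\Xv(t))\bigr)$. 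Thus the discrepancy to be estimated is precisely the image of $(\ov,\Yv(t),\ov)$ under the inverse coordinate transform; since that transform is a $\cC^\fp$ near-identity diffeomorphism it is Lipschitz, with some constant~$L$, on the compact closure of~$d_\mu$, whence the left-hand side of \cref{eq:stbackem} is at most $L\,|\Yv(t)|$.

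It remains to bound $|\Yv(t)|$. By the admissible choice of~$\mu$ (\cref{def:nftpd}) the stable linear flow decays strictly faster than rate~$\mu$, so there is $\mu'>\mu$ and an inner product, adapted to the possibly non-normal~$B$, in which $\langle\Yv,B\Yv\rangle\le-\mu'|\Yv|^2$. The factored equation then gives $\tfrac{d}{dt}|\Yv|^2\le 2\bigl(-\mu'+\|\mathcal{B}(t,\Xv,\Yv)\|\bigr)|\Yv|^2$, and $d_\mu$ is fixed in \cref{def:nfnfim} precisely so that $\|\mathcal{B}\|\le\mu'-\mu$ there; integrating the resulting differential inequality yields $|\Yv(t)|\le|\Yv(t_0)|\,e^{-\mu(t-t_0)}$ on $[t_0,T_\mu)$. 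Combining this with the Lipschitz transfer of the previous paragraph, converting the adapted norm back to the Euclidean one, and absorbing $L\,|\Yv(t_0)|e^{\mu t_0}$ --- which is bounded because $d_\mu$ is bounded --- into a single constant~$C$ produces \cref{eq:stbackem} with the explicit~$C$ of \cref{eq:bigC}; this is the substance of \cref{lem:nfemerge}.

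I expect the main obstacle to be carrying out the $\Yv$-estimate \emph{uniformly over the finite domain}~$d_\mu$ rather than in a shrinking neighbourhood of the origin: one must check that the domain fixed by \cref{def:nfnfim} is genuinely small enough that the nonlinear correction~$\mathcal{B}$ never erodes the stable decay below the prescribed rate~$\mu$, and one must control the non-normality of~$B$, both in choosing the adapted norm and in tracking how its distortion feeds into the constant~$C$. Everything else --- the decoupling of the $\Xv$-equation, the exact invariance of $\cM_c$ and $\cM_{cs}$ in the normal-form coordinates, and the Lipschitz transport back --- is structural once \cref{thm:stsenfp} is in hand, with the non-autonomy entering only through the explicit $t$-dependence of~$\mathcal{B}$ and of the coordinate transform, which the uniform bounds over~$d_\mu$ already accommodate.
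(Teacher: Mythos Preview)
Your proposal is correct and follows essentially the same route as the paper: pass to normal-form coordinates via \cref{thm:stsenfp}, use that on $\cM_{cs}$ the $\Xv$-equation decouples so $(\Xv(t),\ov,\ov)$ is the matching center-manifold solution, bound $|\Yv(t)|$ by a Lyapunov/adapted-norm argument (this is the paper's \cref{lem:exptri}, with your $\mathcal{B}$ being precisely the $G$ already provided by the normal form~\cref{eq:stsesdenfyy}, so no Hadamard factoring is needed), and then transport back by the Lipschitz constant of the coordinate transform to obtain $C=\cond Q\,|\Yv_0|\,\Lip$. The only caveat is that the $\Xv$-equation is not globally independent of $(\Yv,\Zv)$---it carries the term $\cF\Yv\Zv$---so the decoupling holds only on $\cM_{cs}$ (or $\cM_{cu}$), which is exactly where you use it; and be careful to distinguish the normal-form domain $D_\mu\subseteq\DD_\mu$ from its image~$d_\mu$ (\cref{def:nfdd}).
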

\end{subequations}

These two propositions are important in applications: 
\emph{all constructed non-autonomous center manifolds are the attractive \emph{exact} center manifolds, in a finite domain, for a non-autonomous system `close' to the one specified.}
Of course the emergence only follows from the exponential attraction of~\eqref{eq:stbackem} when the time-scale, \((T_\mu-t_0)\), for staying in the domain~\(d_\mu\) is long enough for the decay at rate~\(\mu\) to overcome, to some useful extent depending upon the context, the short-term transients following time~\(t_0\).

In applications, the finite domain~\(d_\mu\) is significant.  
This backwards approach, via \cref{lem:nfftd}, takes a first step in quantifying the size of this domain.
Even in the simpler case of hyperbolic dynamics, \cite{Lan2013} comment that ``it is difficult to state the precise region of validity of this mapping \ldots\ these theorems only provide a much under-estimated linearization region''. 
\cref{lem:nfftd} establishes a generic lower bound on the finite size of the domain for a normal form system~\cref{eqs:stsesdenf}, and hence for a center manifold~\cref{eqs:stbackex}.
Extant forward theory has no such general lower bound.

In these propositions, one chooses~\(\mu\) in the spectral gap to suit the time-scale of interest in a specific application.
The time interval~\(\TT_\mu\) would usually be determined `on-the-fly' depending upon specific initial conditions.
Lastly, usually the specific value of the constant~\(C\) in~\eqref{eq:stbackem} is largely irrelevant \text{to an application.}

What is innovative herein is \emph{not} the algebraic machinations of \cref{sec:stsenfsmsde} that constructs a useful the normal form---that developed herein is a new synthesis and extension of earlier work \cite[e.g.,][]{Knobloch82, Arnold03, Roberts06k,  Potzsche2006}.
What is innovative herein is the framework established in \cref{sec:nfeep}: the new definitions of invariant manifolds; the ability to handle more general nonlinearities; the flexibility to address geophysical\slash engineering slow manifolds;  the finite time validity; an unprecedented lower bound on the size of the domain of validity; and the new backward interpretation of the algebraic construction.

\begin{full}
\subsection{Example: an exact coordinate transformation}
\label{eg:nfect}

A key part of the approach is to establish (\cref{sec:stsenfsmsde}) a smooth conjugacy between dynamics of the original variables and the dynamics of a `normal form' system with some known exact properties.
This subsection introduces such a conjugacy for an example 2D autonomous dynamical system \cite[]{Roberts85b}.  The conjugacy shows the finite domain of the emergent 1D, slow manifold.
In time~\(t\) and variables~\(x(t)\) and~\(y(t)\) the system is
\begin{equation}
	\dot x=-xy \qtq{and} 
	\dot y=-y+x^2-2y^2,
	\label{eq:cmect}
\end{equation}
where overdots denote time derivatives.
\cref{fig:ctECTtraj}(left) plots some trajectories illustrating the exponentially quick attraction of the parabolic slow manifold \(y= x^2\).
\begin{figure}
\centering
\begin{tabular}{@{}c@{}c@{}}
\input{ctECTtrajx.ltx}&
\input{ctECTtrajXX.ltx}
\end{tabular}
\caption{selected trajectories of (left) the system~\cref{eq:cmect} in the \(xy\)-plane, and (right) the transformed system~\cref{eq:cmECT} in the \(XY\)-plane. }
\label{fig:ctECTtraj}
\end{figure}%
Marvellously, the near-identity coordinate transform
\begin{equation}
x=\frac{X}{\sqrt{1-2Y/(1+2X^2)}}
\quad\text{and}\quad
y=X^2+\frac{Y}{{1-2Y/(1+2X^2)}},
	\label{eq:ctECT}
\end{equation}
transforms the \ode{}s~\cref{eq:cmect} into the normal form system
\begin{equation}
\dot X=-X^3
\quad\text{and}\quad
\dot Y=-Y\left[\frac1{1+2X^2}+4X^2\right].
	\label{eq:cmECT}
\end{equation}
Generally, throughout this article lowercase letters denote original variables, and uppercase letters denote conjugate normal form variables.
\cref{fig:ctECTtraj}(right) plots the corresponding trajectories of~\cref{eq:cmECT}.  
The normal form system~\cref{eq:cmECT} immediately shows that \(Y=0\) is an invariant manifold.
The normal form~\cref{eq:cmECT} also shows that \(Y=0\) is exponentially attractive, for all~\(X\), at a rate of one or more.
Since \(\dot X=-X^3\) has no linear term, \(Y=0\) is called a slow manifold.
Thus the slow variable~\(X\) describes the emergent long-time evolution of~\eqref{eq:cmECT} for \emph{all} initial conditions in the \(XY\)-plane.

\begin{figure}
\centering
\begin{tabular}{@{}cc@{}}
\tikzsetnextfilename{ctECT}
\begin{tikzpicture}
\begin{axis}[xlabel={$x$},ylabel={$y$},small
    ,ymin=-0.5,ymax=2,xmin=-1.5,xmax=1.5,axis equal
    ,label shift={-1ex},max space between ticks={100}
    ,cycle list={magenta,red,orange,yellow!90!black,black,green,cyan,blue,violet}
    ,restrict x to domain*=-4:4,restrict y to domain*=-4:4
    ]
\addplot[fill=blue,forget plot,domain=-1.5811:1.5811,opacity=0,fill opacity=0.2] {x^2-0.5};

    \foreach \X in {-0.8,-0.6,...,0.8} {
    \addplot+[no marks,domain=-0.99:0.45] (
    { (\X)/sqrt(1-2*\x/(1+2*(\X)^2))},
    {(\X)^2+\x/(1-2*\x/(1+2*(\X)^2))} );
    };
    \foreach \Y in {-0.8,-0.6,...,0.4} {
    \addplot+[smooth,no marks,domain=-0.9:0.9] (
    { \x/sqrt(1-2*(\Y)/(1+2*\x^2))},
    {\x^2+\Y/(1-2*(\Y)/(1+2*\x^2))});
    };  
\end{axis}
\end{tikzpicture}&
\tikzsetnextfilename{ctECTx}
\begin{tikzpicture}
\begin{axis}[xlabel={$X$},ylabel={$Y$},small
    ,ymin=-0.5,ymax=0.5,axis equal
    ,label shift={-1ex},max space between ticks={100}
    ,unbounded coords=jump,samples=51
    ,cycle list={magenta,red,orange,yellow!90!black,black,green,cyan,blue,violet}
    ,restrict x to domain*=-4:4,restrict y to domain*=-4:4
    ]

\addplot[fill=blue,forget plot,domain=-1.1:1.1,opacity=0,fill opacity=0.2] {-0.9} \closedcycle;
\addplot[fill=blue,forget plot,domain=-1.1:1.1,opacity=0,fill opacity=0.2] {x^2+0.5} \closedcycle;

    \foreach \X in {0.8,0.6,...,-0.8} {
    \addplot+[no marks,domain=-0.45:1.9] (
    { (\X)/sqrt(1+2*(\x)-2*(\X)^2)},
    {((\x)-(\X)^2)*(1+2*(\x))/(1+2*(\x)-2*(\X)^2)^2} );
    };
    \foreach \Y in {0.8,0.6,...,-0.4} {
    \addplot+[no marks,domain=-0.9:0.9] (
    { (\x)/sqrt(1+2*(\Y)-2*(\x)^2)},
    {((\Y)-(\x)^2)*(1+2*(\Y))/(1+2*(\Y)-2*(\x)^2)^2} );
    };  
\end{axis}
\end{tikzpicture}
\end{tabular}
\caption{domains of validity of the coordinate transform~\cref{eq:ctECT} in both the \(xy\)-plane (left) and the \(XY\)-plane (right).
Superimposed are selected coordinate curves at spacing of (left) \(\Delta X=\Delta Y=0.2\), and (right) \(\Delta x=\Delta y=0.2\); the black curves are transformed coordinate axes \(XY=0\) and \(xy=0\)\,.}
\label{fig:ctECT}
\end{figure}%
The exact coordinate transformation~\cref{eq:ctECT} then determines an \(xy\)-domain in which exactly corresponding statements hold for the given \(xy\)-system~\cref{eq:cmect}.
\begin{itemize}
\item In the \(XY\)-plane the \ode\ system~\cref{eq:cmECT} has no singularities and so the statements of the previous paragraph are globally valid in~\((X,Y)\).

However, the coordinate transform~\cref{eq:ctECT} is singular when the denominator \(1+2X^2-2Y=0\)\,, that is, it is singular when \(Y=\frac12+X^2\).
The domain of validity must be restricted to \(Y<\frac12+X^2\) (to include the origin) as illustrated in the right-hand plot of \cref{fig:ctECT}.

Further straightforward algebra derives that the Jacobian of the coordinate transform~\cref{eq:ctECT} has determinant
\({(1+2X^2)^{3/2}}/{(1+2X^2-2Y)^{5/2}}\)
which is never zero and so the coordinate transform only degenerates  on the singular curve \(Y=\frac12+X^2\).

\item Two limits determine the domain of validity in the \(xy\)-plane.
Firstly, as \(Y\to-\infty\), \(y\to X^2-\frac12\) so the domain is restricted to being in \(y>-\frac12\)\,, and a little more algebra shows we need \(y>-\frac12+x^2\), as illustrated in the left-hand plot of \cref{fig:ctECT}. 
Secondly, as \(Y\to\frac12+X^2\) from below, \(y\to+\infty\) and so the \(xy\)-domain is unrestricted above.
\end{itemize}

Hence a slow manifold of the \(xy\)-system~\cref{eq:cmect} is exactly \(y=x^2\) (since~\cref{eq:ctECT} on the slow manifold \(Y=0\) reduces to \(x=X\) and \(y=X^2\)) and exists globally in the \(xy\)-plane.
Further, \emph{all} initial conditions in \(y>\tfrac12+x^2\) generate trajectories which are exponentially quickly attracted to solutions on this slow manifold.

\cref{fig:ctECT} reaffirms that the coordinate transform which nonlinearly separates slow and stable variables may exist over a large domain in the \(xy\)-plane: its existence need not be restricted to a `small' neighbourhood of the origin.
This example illustrates the value of such a `normal form' coordinate transformation between conjugate dynamical systems.
Our challenge is to establish a widely useful approach to such properties for general scenarios.

\end{full}

\subsection{An example nonlinear non-autonomous system}
\label{sec:stseeg}

This example illustrates results of prime interest in applications, namely \cref{cor:stbackex,pro:stbackem,def:nfnfim}, for the dynamics of nonlinear, multi-scale, non-autonomous systems.
For some specified \idx{forcing}~$w(t)$ with strength~$\sigma$, consider the dynamics of~$(x(t),y(t))$ according to the coupled \ode{}s
\begin{equation}
    \dot x=-xy
    \quad\text{and}\quad
    \dot y=-y+x^2-2y^2+\sigma w(t).
    \label{eq:stsetoy}
\end{equation}
\begin{figure}
    \centering
    \begin{tabular}{@{}ll@{}}
    \parbox[t]{0.28\textwidth}{%
    \caption{\label{fig:stsesim1}%
    trajectories of the example system~\eqref{eq:stsetoy} from various initial conditions with \(|y(0)|=0.45\), for the forcing $w(t)=\cos t$ of strength $\sigma=0.25$\,.
The trajectories are attracted to the centre manifold $y\approx x^2+\sigma(\cos t+\sin t)/2$\,.}}
&    \raisebox{-\height}{\input{sim1.ltx}}
\end{tabular}
\end{figure}%
\cref{fig:stsesim1} plots some representative trajectories of the \ode\ system~\eqref{eq:stsetoy} for the trigonometric forcing~\(w(t)=\cos t\)\,.
In the plotted \idx{finite domain} near the origin, the $y$~variable decays exponentially quickly to oscillations about $y\approx x^2$; whereas the $x$~variable evolves relatively slowly over long times, albeit affected by the rapidly oscillating~$y$---a rapid oscillation that also intrudes on the supposedly slow~\(x\).
\cref{fig:stsesim1} plots a case of trigonometric forcing, but we discuss quite general forcing~\(w(t)\).

For the simple system~\eqref{eq:stsetoy}, well-known extant forward theory readily applies to guarantee the existence, emergence, and approximability of a \emph{local} time-dependent (non-unique) centre manifold for the specified system~\eqref{eq:stsetoy} \cite[e.g.,][]{Knobloch82, Potzsche2006, Haragus2011}.  
Standard algebraic machinations \cite[e.g.,][]{Chao95} derive that the centre manifold may be parametrised \emph{in some neighbourhood of the origin} by an evolving variable~\(X(t)\) as, \text{for example},
\begin{subequations}\label{eqs:stsetoycm}%
\begin{align}
x&= X
+\sigma   \ou\big(w,tt,-\big) X
+\Ord{X^4+\sigma^2},
\\
y&=  X^{2}
+\sigma  \ou\big(w,tt,-\big)
+2\sigma   \big(  1
- \ou\big({},tt,-\big) \big)\ou\big(w,tt,-\big) X^{2}
+\Ord{X^4+\sigma^2},
\\
\dot X&= - X^{3} 
-\sigma   w X 
+\Ord{X^4+\sigma^2},
\end{align}
\end{subequations}
where the given asymptotic errors are ``as \((X,\sigma)\to0\)'' (\cref{def:ocf}), and the convolutions~\(\ou\big({},tt,-\big)\), defined precisely by~\eqref{eq:stsezmuf}, are an exponentially decaying weighted integral of the immediate past history of the forcing~\(w(t)\).

For the simple system~\eqref{eq:stsetoy}, the backward approach developed here gives subtly different results.
\cref{cor:stbackex} asserts that there exist nearby systems of interest, such as that obtained by the time-dependent coordinate transform%
\begin{subequations}\label{eqs:stsetoyxy}%
\begin{align}
x&=X
+ X Y
+\sigma   \ou\big(w,tt,-\big) X
+\tfrac32  X Y^{2}
\,,
\\
y&=Y
+\sigma  \ou\big(w,tt,-\big)
+ \big(X^{2}+2 Y^{2}\big)
+4 \sigma \ou\big(w,tt,-\big) Y
+4 Y^{3}
+\sigma \big(2 \ou\big(w,tt,-\big) X^{2}
\nonumber\\&\qquad{}
-2 \ou\big(\ou\big(w,tt,-\big),tt,-\big) X^{2} -4 \ou\big(w,tt,{}\big) Y^{2}+12 \ou\big(w,tt,-\big) Y^{2}\big)
\,,
\label{eq:stsetoyxyy}
\end{align}
\end{subequations}
together with the associated evolution of the variables  
\begin{subequations}\label{eqs:stsetoyXY}%
\begin{align}
\dot X&=- X^{3}
-\sigma w X
\,,
\label{eq:stsetoyXYX}
\\
\dot Y&=-Y
-4 \sigma w Y
-2 X^{2} Y
\,.
\label{eq:stsetoyXYY}
\end{align}
\end{subequations}
This specific `nearby' system of \eqref{eqs:stsetoyxy}+\eqref{eqs:stsetoyXY} is a truncation of eqns.~(22)--(28) \cite[]{Roberts06k}, derived as an example of forward theorems applied to~\eqref{eq:stsetoy}. 
One may use the computer algebra of a web service \cite[]{Roberts09c} to confirm.
The details of the derivation (\cref{sec:stsenfsmsde}) are not relevant to the core results of this article, other than that the details crucially ensure that \eqref{eqs:stsetoyxy}+\eqref{eqs:stsetoyXY} is in a precise asymptotic sense close to the specified~\eqref{eq:stsetoy}.
Further, in all examples, one may straightforwardly check the `closeness' via first-year undergraduate algebra by simply taking~\(\de t{}\) of the transform~\eqref{eqs:stsetoyxy}, substituting~\eqref{eqs:stsetoyXY}, and simplifying: here one recovers \((\dot x, \dot y)\) of the specified~\eqref{eq:stsetoy} to a difference~\Ord{X^4+Y^4+\sigma^2} as \((X,Y,\sigma)\to0\)\,.
The nearby system~\eqref{eqs:stsetoyxy}+\eqref{eqs:stsetoyXY} is of interest because its  vector field and that of the specified system~\eqref{eq:stsetoy} differ by just~\Ord{X^4+Y^4+\sigma^2}.
That is, we expect that there exists a neighbourhood of the origin in which we may use \eqref{eqs:stsetoyxy}+\eqref{eqs:stsetoyXY} to reasonably predict the dynamics of the specified~\eqref{eq:stsetoy}---predictions such as the following.

\begin{itemize}
\item From the form of~\eqref{eqs:stsetoyXY}, \(Y=0\) is invariant for all~\(X\), and, further, \(Y=0\) is an exact (global) centre manifold for~\eqref{eqs:stsetoyXY}.
Consequently, the coordinate transform~\eqref{eqs:stsetoyxy} with \(Y=0\) gives that an exactly known centre manifold for the system~\eqref{eqs:stsetoyxy}+\eqref{eqs:stsetoyXY} is
\begin{align}
x&= X
+\sigma   \ou\big(w,tt,-\big) X\,,
&
y&=  X^{2}
+\sigma  \ou\big(w,tt,-\big)
+2\sigma   \big(  1
- \ou\big({},tt,-\big) \big)\ou\big(w,tt,-\big) X^{2}.
\label{eq:stsetoyCM}
\end{align}
\cref{cor:stbackex} generally establishes that there are many such systems close to the specified~\eqref{eq:stsetoy}, where the systems possess an exactly known centre manifold, such as the exact~\eqref{eq:stsetoyCM}.

This illustrates a classic distinction between backwards and forwards theory~\cite[e.g.,][]{Grcar2011}: 
forwards theory asserts a centre manifold exists but often can only be known approximately---the order of error terms in~\eqref{eqs:stsetoycm}; 
whereas the backwards approach asserts that there exists an exactly known centre manifold~\eqref{eq:stsetoyCM} of a known system close to that specified.
\emph{Forwards and backwards are complementary views.}

In systems where the centre manifold is of few dimensions, such as this example, one may apply manifold continuation methods to construct numerically exact manifolds over finite domains---supported by forwards theory \cite[e.g.,][]{auto01, England2007, Ziessler2018}.  
Such numerically exact continuation is great for some applications.
But for systems with centre manifolds of more than just a few dimensions, such numerical continuation is not feasible.

\item Regarding the domain of validity:  
extant forward theory would \emph{only} assert that properties of a centre manifold hold in \emph{a neighbourhood} of the origin.  
Many users assume ``a neighbourhood'' {must} be small.
But the neighbourhood of validity could be quite large: \cref{fig:stsesim1} suggests that, for its trigonometric forcing, the domain of validity for this example is at least that part of the \(xy\)-domain plotted. 
\begin{full}
\cref{fig:ctECT} shows an example where the domain of validity extends to infinity in some directions.
\end{full}%
Although \(Y=0\) is an exact \emph{global} centre manifold of the conjugate system~\eqref{eqs:stsetoyXY},
its validity in the \(xy\)-plane, namely~\eqref{eq:stsetoyCM}, is usually limited by the coordinate transform~\eqref{eqs:stsetoyxy}: the coordinate transform must not degenerate.
In contrast to what is possible in extant forward theory, this consideration empowers backwards theory to put a lower bound (\cref{lem:nfftd}) on the size of the \text{domain of validity.}

\item Recall that we usually want trajectories to be exponentially quickly attracted to a centre manifold---that it is emergent. 
We rewrite~\eqref{eq:stsetoyXYY} as \(Y=Y_0\exp\big(-\int_0^t 1+4\sigma w+4X^2\d t\big)\) to demonstrate that \(Y\to0\) exponentially quickly in time for all~\(X(t)\) and for most~\(w(t)\), including \(w=\cos t\) of \eqref{fig:stsesim1}.
This rapid emergence of a centre manifold in the \(XY\)-system then implies that the centre manifold of the \(xy\)-system~\eqref{eqs:stsetoyxy}+\eqref{eqs:stsetoyXY} is rapidly emergent  in the finite domain of validity of the coordinate transform~\eqref{eqs:stsetoyxy} (\cref{pro:stbackem}).
This is a cognate result to that of forward theory which also proves there is an emergent centre manifold, albeit only known in ``a neighbourhood''.
Forwards and backwards give complementary views of this same property.

\end{itemize}

A foundational difference between forward theory and the backwards approach lies in the \cref{def:nfnfim} for invariant manifolds.
In this example, the \emph{algebraic form} of~\eqref{eqs:stsetoyXY} immediately guarantees that \(Y=0\) is a centre manifold for the system.
Then the coordinate transform~\eqref{eqs:stsetoyxy} guarantees that \eqref{eq:stsetoyCM}~is an exact centre manifold for the system~\eqref{eqs:stsetoyxy}+\eqref{eqs:stsetoyXY} in the \(xy\)-plane.
These guarantees apply \emph{for any time interval of interest}, just as long as the system stays within the domain of validity.
In contrast, extant forward theory for non-autonomous systems \emph{requires infinite time integrals to exist} in order to define the invariant manifolds \cite[e.g.,][]{Henry81, Potzsche2006, Haragus2011, Barreira2007, Aulbach2006}.  
Thus for the fundamental definitions to apply, the system has to stay in its neighbourhood of validity for all time. 
Such forward theory must consequently place onerous restrictions on the nonlinearity and time dependence in the systems to which it applies \cite[e.g.,][Hypothesis~3.8(ii)]{Haragus2011}.
Such onerous restrictions are not required in the backwards approach---because of the crucial \text{change in definition.}


\subsection{Discussion}

The fundamental change by the backwards approach in the basic definition of invariant manifolds should empower further developments of backwards theory to apply to a much wider variety of problems than that to which extant forward theory applies.
One example is the first step in creating a backwards theory for \pde{}s by \cite{Hochs2019}.

\begin{full}
Encompassing unstable dynamics with both center and stable is necessary for application to St Venant-like, cylindrical, problems \cite[e.g.,][]{Mielke88b, Mielke91a, Haragus95b}, and to deriving boundary conditions for approximate \pde{}s \cite[e.g.,][]{Roberts92c}.
However, applications of forwards theory in such a general setting is often confounded by two issues.
First, \pde{}s, such as the Navier--Stokes equations for fluid flow, \(\uv_t+\uv\cdot\grad\uv=-\grad p/\rho+\Delta\uv\), typically have unbounded operators, such as~\(\Delta\) and~\(\grad\), and so often do not lie in the scope of most theories of non-autonomous invariant manifold which typically require bounded operators.
The usual extant boundedness requirement \cite[e.g.,][Hypothesis~2.1(i) and 3.8(i)]{Haragus2011} arises from the general necessity of forward and backward convolutions with the semigroup, convolutions that must be continuous in much extant forward theory. 
Non-autonomous theory by \cite{Mielke86} caters for a useful class of unbounded operators, but only proves existence (Thm.~2.1), not emergence nor approximation.
\cite{Henry81} proves existence and emergence (Thms.~6.1.2 and 6.1.4), but only for sectorial operators (Defn.~1.3.1), which limits applicability, and also does not consider unstable modes that are catered for here.
Second, unbounded nonlinearity, such as~\(\uv\cdot\grad\uv\) and as found so often in applications such as fluid instabilities, is not generally covered by the Lipschitz and/or uniformly bounded requirement of most extant forward theory \cite[e.g.,][]{Henry81, Mielke86, Aulbach2000, Chicone97, Haragus2011}.  
So, despite some interesting scenarios having rigorous invariant manifolds beautifully established via strongly continuous semigroup operators and by mollifying nonlinearity \cite[e.g.,][]{Carr81, Vanderbauwhede89}, extant \emph{non-autonomous} forward theory even for finite-D typically imposes preconditions \cite[e.g.,][Hypothesis~3.8(ii)]{Haragus2011} that we relax here via the proposed backward theory: for example, \cref{ass:givensys} only requires nonlinearities to be~\(C^\fp\) for some order~\fp.
\end{full}

One important class of applications is to invariant manifold models of slow modes among fast waves, such as the crucial quasi-geostrophic slow manifold of geophysical fluid dynamics.
\cref{sec:asfss} establishes that although there may be no such slow manifold for a specified system \cite[e.g.,][]{Lorenz87, Vautard86}, nonetheless there generally exist many arbitrarily close systems that do possess true slow manifolds (\cref{pro:sccte}).
The Lorenz86 system \cite[]{Lorenz86} is explored as an illustrative example. 
Generally in dynamics we seek persistent objects, whereas in this type of scenario a slow manifold is not persistent under perturbations \cite[e.g.,][]{Lorenz87, Vautard86}.
Nonetheless, the `slow manifold' of the quasi-geostrophic approximation \cite[e.g.,][]{Leith80},  also of other cognate approximations, have remained hugely valuable practical concepts in science and engineering for several hundred years.
Backwards theory is an innovative new way to provide some theoretical support and understanding of the concept, \text{as \cref{sec:asfss} discusses.}

Many physical systems possess useful symmetries to be maintained for an invariant manifold to be useful in the application.  
Such symmetries may include even\slash odd symmetry in variables, or, importantly in many geophysical\slash engineering applications, the Hamiltonian nature of the given system.
Due to the manyfold possibilities in constructing a suitable coordinate transform~\eqref{eq:stbexsm}, it is usually straightforward to implement the algebra of \cref{sec:stsenfsmsde} to preserve the required symmetries.

\begin{full}
In the example of \cref{sec:stseeg}, the forcing~$w(t)$ could be as regular as a periodic oscillator, as used in \cref{fig:stsesim1}, or could be the output of a deterministic chaotic system~\cite[e.g.,][]{Just01}, or could be random such as the white noise formal derivative of a `{random walk}' {Wiener process}.
The last case of a noisy (stochastic) forcing is both particularly interesting and particularly delicate.
Indeed, the delicacies of the stochastic case determine the methodology we use for all~cases.%
\footnote{Any stochastic interpretation of the algebra is to be the Stratonovich interpretation, not It\^o interpretation, because we use ordinary calculus for all analysis (as did \cite{Arnold98} and \cite{Arnold03}).
}
\end{full}


Extant theory for non-autonomous stochastic systems \cite[e.g.,][]{Arnold03, Mohammed2013, Chekroun2015a} is bedevilled by very rare `escape' events that confound straightforward application of theory to most stochastic systems of interest in applications.
This article is a first step towards overcoming such limitations by beginning to develop this new backward approach to complement extant forward theory. 
\begin{full}
Such backward theory has proved extremely useful in solving linear equations~\cite[e.g.,][]{Grcar2011}, in understanding sensitivity in eigen-problems~\cite[]{Ghosh2018}, and improving our understanding of the validity of perturbation methods \cite[]{Corless2019}.
Indeed \cite{Corless2019} conclude [p.76] that such a backward ``approach is surprisingly useful and clarifies several issues'' including that it ``allows one to directly use approximations taken from divergent series in an optimal fashion without appealing to rules of thumb'', and that it ``interprets the computed solution solution as the exact solution to just as good a model.''
These conclusions also apply here.
\end{full}

Future research is planned extend this approach to further develop new useful invariant manifold theory for partial differential equations \cite[e.g.,][]{Hochs2019}, and/or stochastic dynamical systems, and/or fractional differential equations (\cite{Cong2016} comment that center manifold theory for fractional \textsc{de}s is difficult as they do not generate a semigroup).
The aim of this article is to start establishing results in this direction by laying a foundation of backwards theory in general finite dimensional, non-autonomous, \text{nonlinear systems~\eqref{eq:phys}.}

\section{Construct a conjugacy with a normal form system}
\label{sec:stsenfsmsde}

This section establishes that coordinate transform arguments previously used for fast-slow separations of stochastic dynamics \cite[e.g.,][]{Arnold03, Roberts06k} generalise to also encompasses quite general non-autonomous dynamics.  
This section establishes firstly that time dependent coordinate transforms exist from systems with decoupled center modes from hyperbolic modes (making the time dependent invariant manifolds easy to extract).
Such a generalised Hartman--Grobman existence theory has previously been established \cite[\S4, e.g.]{Aulbach2000}: the qualitative difference here is that the framing is distinct because here we provide results to prove the new backward theory of \cref{sec:nfeep}.
Secondly, this section establishes that although anticipation of the time dependence may be necessary in the full transform (as in the~\(\ou\big(w,tt,{}\big)\) of the example~\eqref{eq:stsetoyxyy}), no anticipation need appear on the center manifold itself.

This section straightforwardly utilises and synthesises previous research.
The novelty here is combining and extending the many aspects together: 
covering cases of dynamics with stable, center and unstable dynamics; 
covering cases where the center dynamics may have non-zero growth\slash decay rate; 
covering non-autonomous effects in a manner that best suit long-time modelling (and potentially stochastic effects).%
\footnote{\cite{Haragus2011} restrict their normal form Theorem~5.5.2 to periodic time dependence.}
Although the construction technique may be useful in some scenarios,
the real importance of this section is that the results underpin the new backward theory of \cref{sec:nfeep} that leads to \cref{cor:stbackex,pro:stbackem}.

\begin{assumption}\label{ass:givensys}
The given separated system~\cref{eqs:stsesde} is to satisfy the following:
\begin{enumerate}
 \item the matrix~$A$ has eigenvalues~\(\alpha_1,\ldots,\alpha_m\), possibly complex, with `small' real-part bounded by \(|\Re\alpha_i|\leq\alpha\);%
\footnote{In principle, the matrices~$A$, $B$ and~\(C\) could also depend upon time~\cite[e.g.,][]{Chicone97, Aulbach2000, Potzsche2006}.  
When the Lyapunov exponents of the corresponding linear dynamics are near-zero, negative and positive respectively,
then the invariant manifolds should still exist and have nice properties.
However, we focus on the algebraically tractable case when the basic linear operators~$A$, $B$ and~\(C\) are constant.
Indeed, in constructing nontrivial non-autonomous slow manifolds, the \emph{only} definite example that I recall that has not been based upon constant linear operators is just one example by \cite{Potzsche08b}.
}

    \item the matrix~$B$ has eigenvalues $\beta_1,\ldots,\beta_n$, possibly complex, with `large' negative real-part bounded above by $\Re\beta_j\leq-\beta<0$\,;

    \item similarly, the matrix~$C$ has eigenvalues $\gamma_1,\ldots,\gamma_\ell$, possibly complex, with `large' positive real-part bounded below by $0<\beta\leq\Re\gamma_k$ (for simplicity the bound~\(\beta\) is common to both \(B\) and~\(C\));%
    
    \item there are spectral gaps as these rate bounds are to satisfy \(\beta>(2\fp-1)\alpha\) for some integer order \(\fp\geq2\);

    \item  for the same order~\fp, functions~$\fv$, $\gv $ and~\hv\ are~$\cC^\fp(d)$ in some finite, connected, domain~$d$ containing the origin, and the functions are `strictly nonlinear' in that they and their first derivatives in~\(\xv,\yv,\zv\) are all zero at the origin (that is, \Orc{2} according to \cref{def:ocf}).
    
\begin{full}
 \item the time dependence may be written, implicitly or explicitly, as a linear combination of some number of independent forcing processes~$w_l(t)$ (which in future extension to a stochastic case would be {Stratonovich} `white noises', or even Marcus Levy flights \cite[e.g.,][]{Chechkin2014}). 
\end{full}
    
\end{enumerate}
\end{assumption}

\paragraph{A time dependent coordinate transform}

Analogous to the introductory example of \cref{sec:stseeg}, we relate the generic \ode\ system~\cref{eqs:stsesde} in~$\uv=(\xv,\yv,\zv)$ to a conjugate system in new coordinates $\Uv=(\Xv,\Yv,\Zv)$ by a time dependent, near identity, coordinate transform
\begin{equation}
    \xv=\xv(t,\Xv,\Yv,\Zv),\quad
    \yv=\yv(t,\Xv,\Yv,\Zv),\quad
    \zv=\zv(t,\Xv,\Yv,\Zv).
    \label{eq:stsexform}
\end{equation}
Such a time dependent coordinate transform is to be chosen such that the system~\cref{eqs:stsesde} is closely approximated by the `simpler' conjugate system.
\begin{full}
The variables \((\Xv,\Yv,\Zv)\) are sometimes called intrinsic coordinates \cite[(26), e.g.]{Brunton2016}.
\end{full}
We specifically seek to construct coordinate transforms that simplify the \ode{}s in the sense of nonlinearly separating the dynamics of the centre, stable, and unstable variables.
In this section the meaning of `closely approximates' is in a precisely defined asymptotic sense.  

\begin{definition}\label{def:ocf}
\begin{enumerate}
\item We use the following precise meanings for asymptotic terminology \cite[e.g.,][pp.78,318 and \S2.3.1, resp.]{Bender81, Roberts2014a}.
For any \(f,g,h,x,x_*\), and \(x_*\)~may be~\(\pm\infty\)\,, we write ``\(f(x)=g(x)+\Ord{h(x)}\) as \(x\to x_*\)'' to mean that \([f(x)-g(x)]/h(x)\) is bounded as \(x\to x_*\)\,.

We define that \emph{asymptotically close to order~\(p\)} means, for \(f,g,x,x_*\) apparent from the context, that \(f(x)=g(x)+\Ord{|x|^p}\) as \(x\to x_*\)\,.   
Then \emph{asymptotically close} means asymptotically close to order~\(p\) for some appropriate order~\(p>0\).
And \emph{exponentially close} means asymptotically close to order~\(p\) for every order~\(p>0\).

\item For every~\(p\), let the order symbol \Orc{p} denote  ``\Ord{|\Xv|^p+|\Yv|^p+|\Zv|^p} as $(\Xv,\Yv,\Zv)\to\ov $''.  
For example, a polynomial term $X^mY^nZ^\ell=\Orc{p}$ if and only if $m+n+\ell\geq p$\,.

\item Choose a threshold rate~\muc\ in the spectral gap, \(\fp\alpha\leq\muc<\beta-(\fp-1)\alpha\) (a gap which exists by \cref{ass:givensys}).  
Then for terms\slash modes\slash variables\slash rates associated with the exponential~\(e^{\lambda t}\), classify them as 
\emph{center} if \(|\Re\lambda|\leq\muc\)\,, 
\emph{stable} if \(\Re\lambda<-\muc\)\,,  
\emph{unstable} if \(\Re\lambda>\muc\)\,, 
\emph{hyperbolic} if \(|\Re\lambda|>\muc\)\,, 
\emph{fast} if \(|\lambda|>\muc\)\,, and
\emph{slow} if \(|\lambda|\leq\muc\)\,. 
\end{enumerate}
\end{definition}

When all the center modes are precisely neutral (the \(A\)-eigenvalue bound \(\alpha=0\)), then the choice \(\muc=0\) recovers the most commonly used classification of center, stable, unstable, slow and fast variables \cite[e.g.,][]{Carr81, Mielke86, Vanderbauwhede89, Haragus2011}.

The constructive series argument of \cite{Roberts06k} [Proposition~1] for stochastic non-autonomous systems is generalised in this section both to include unstable variables and also to cater for threshold rate \(\muc\neq0\)\,. 
\cref{sec:sttsd,sec:sttfd,sec:sttud} contribute to the following \cref{thm:stsenfp} which establishes the existence of the `nearby' dynamical systems invoked in \cref{cor:stbackex,pro:stbackem}.

\begin{theorem} \label{thm:stsenfp}
For every given order~\(p\), \(2\leq p\leq\fp\)\,, there exists a near identity, polynomial, time dependent, coordinate transformation~\cref{eq:stsexform} and a corresponding polynomial normal form system
\begin{subequations}\label{eqs:stsesdenf}%
\begin{align}
    \dot{\Xv}&=A\Xv+\Fv (t,\Xv,\Yv,\Zv)
    \nonumber\\
    &=A\Xv+\Fv_c(t,\Xv)+\cF(t,\Xv,\Yv,\Zv)\Yv\Zv,
     \label{eq:stsesdenfxx}\\
    \dot{\Yv}&=\big[ B+G(t,\Xv,\Yv,\Zv) \big]\Yv,
    \label{eq:stsesdenfyy}\\
    \dot{\Zv}&=\big[ C+H(t,\Xv,\Yv,\Zv) \big]\Zv,
    \label{eq:stsesdenfzz}
\end{align}
\end{subequations}
which together is approximately conjugate to the non-autonomous system~\cref{eqs:stsesde},
where~\Fv, \(G\Yv\) and~\(H\Zv\) are~\Orc{2}  (and where \cF~is a rank three tensor),
and where, by construction, the difference between the system~\cref{eq:stsexform}+\cref{eqs:stsesdenf} and the system~\cref{eqs:stsesde} is~\Orc{p}.

Significantly, $\Fv $, $G$ and~\(H\) need only contain fast time `memory\slash anticipation' integrals in terms that are quadratic, or higher power, in the non-autonomous terms: no such integrals are \emph{needed} in the linear terms.
\end{theorem}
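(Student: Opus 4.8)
\emph{Approach.} The plan is the classical order-by-order normal-form construction, recast so as to track the non-autonomous dependence explicitly. Write the required transform as a finite series \(\uv=\Uv+\sum_{q=2}^{p}\vec\epsilon_q(t,\Uv)\), \(\Uv=(\Xv,\Yv,\Zv)\), with each \(\vec\epsilon_q\) a multinomial homogeneous of degree~\(q\) in~\(\Uv\) with time-dependent coefficients; because \(\fv,\gv,\hv\) are \Orc2 (\cref{ass:givensys}) the series may start at \(q=2\), so the transform is automatically near identity, \(\xv=\Xv+\Orc2\) and likewise for \(\yv,\zv\). Proceed by induction on~\(q\): suppose \(\vec\epsilon_2,\dots,\vec\epsilon_{q-1}\) have been chosen so that in the \((\Xv,\Yv,\Zv)\) coordinates the system~\cref{eqs:stsesde} agrees with a normal form of the shape~\cref{eqs:stsesdenf} to within~\Orc q. Substituting \(\uv=\Uv+\vec\epsilon_q(t,\Uv)+\Orc{q+1}\), using \(\dot\Uv=\diag(A,B,C)\Uv+\Orc2\), and collecting the degree-\(q\) terms gives the homological equation
\[
\mathsf L\vec\epsilon_q:=\partial_t\vec\epsilon_q+(\partial_{\Uv}\vec\epsilon_q)\,\diag(A,B,C)\,\Uv-\diag(A,B,C)\,\vec\epsilon_q=\vec r_q(t,\Uv)-\vec n_q(t,\Uv),
\]
where \(\vec r_q\) is the known degree-\(q\) residual and \(\vec n_q\) is the degree-\(q\) piece of the normal form, still to be chosen. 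On a single monomial coefficient in the equation whose diagonal linear entry is \(\mu\in\{\alpha_i,\beta_j,\gamma_k\}\), multiplying \(\Xv^{\vec j}\Yv^{\vec k}\Zv^{\vec l}\), the operator \(\mathsf L\) acts as \(c(t)\mapsto\dot c+(\lambda-\mu)c\) with \(\lambda=\vec j\cdot\vec\alpha+\vec k\cdot\vec\beta+\vec l\cdot\vec\gamma\); the block-by-block bookkeeping is carried out in \cref{sec:sttsd,sec:sttfd,sec:sttud}.

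\emph{Choosing the normal form and solving the homological equation.} For each monomial decide whether it goes into \(\vec n_q\) (kept, with zero coefficient in \(\vec\epsilon_q\), hence with \emph{no} time convolution) or is removed into \(\vec\epsilon_q\) by solving the scalar ODE \(\dot c+(\lambda-\mu)c=r(t)\). I keep: in the \(\Yv\)-equation every monomial divisible by a component of~\(\Yv\) (so the result is literally \([B+G]\Yv\)); in the \(\Zv\)-equation every monomial divisible by a component of~\(\Zv\) (giving \([C+H]\Zv\)); in the \(\Xv\)-equation the monomials depending on~\(\Xv\) alone, which assemble into~\(\Fv_c(t,\Xv)\), together with those divisible by both a \(\Yv\)- and a \(\Zv\)-component, which assemble into \(\cF(t,\Xv,\Yv,\Zv)\,\Yv\Zv\). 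Every other monomial is removed. By the way the kept set was chosen, a removed monomial either sits in the \(\Xv\)-equation and misses \(\Yv\) entirely or misses \(\Zv\) entirely, or sits in the \(\Yv\)-equation and misses \(\Yv\), or in the \(\Zv\)-equation and misses \(\Zv\); combining this with the eigenvalue bounds \(|\Re\alpha_i|\le\alpha\), \(\Re\beta_j\le-\beta\), \(\Re\gamma_k\ge\beta\), the total degree bound \(|\vec j|+|\vec k|+|\vec l|\le\fp\), and the spectral gap \(\beta>(2\fp-1)\alpha\) of \cref{ass:givensys}, a short computation gives either \(\Re(\lambda-\mu)\le\fp\alpha-\beta<0\) or \(\Re(\lambda-\mu)\ge\beta-\fp\alpha>0\), so \(|\Re(\lambda-\mu)|\ge\beta-\fp\alpha>0\). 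Accordingly the removed coefficient is defined by \(c(t)=\int_{-\infty}^{t}e^{-(\lambda-\mu)(t-s)}r(s)\,ds\) when \(\Re(\lambda-\mu)>0\) (a past, ``memory'', integral) and by \(c(t)=-\int_{t}^{\infty}e^{-(\lambda-\mu)(t-s)}r(s)\,ds\) when \(\Re(\lambda-\mu)<0\) (a future, ``anticipation'', integral); in both cases the kernel decays at a rate \(\ge\beta-\fp\alpha>0\), so these are genuinely \emph{fast} memory/anticipation integrals. The monomials divisible by both \(\Yv\) and \(\Zv\) must be retained precisely because \(\Re(\vec k\cdot\vec\beta+\vec l\cdot\vec\gamma)\) is then not controlled by~\(\beta\) alone and may vanish, so no uniformly fast convolution removing them is available; that they vanish when \(\Yv=\Zv=\ov\) is what makes \cref{eqs:stsesdenf} an exact simplification on the center manifold. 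Iterating this to degree~\(p\) and bounding the unmatched degree-\(\ge p\) remainder by Lagrange's form of Taylor's theorem---legitimate since \(\fv,\gv,\hv\in\cC^\fp\) and \(p\le\fp\)---gives that system~\cref{eq:stsexform}+\cref{eqs:stsesdenf} differs from~\cref{eqs:stsesde} by~\Orc p, with the explicit remainder needed later in \cref{lem:nfftd}.

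\emph{The memory-only-in-quadratic-terms claim, and the main obstacle.} This is tracked by a parallel induction: split every coefficient into its part linear in the non-autonomous forcing and its part of degree \(\ge2\) in the forcing. At degree~\(2\) the residual \(\vec r_2\) is just the degree-\(2\) part of \((\fv,\gv,\hv)\), hence linear in the forcing; its kept monomials enter \(\Fv_c,G,H\) verbatim, \emph{without} convolution, and only its removed monomials acquire a convolution, inside \(\vec\epsilon_2\). Inductively, a degree-\(q\) residual term is either a degree-\(q\) piece of the original nonlinearity (linear in the forcing and convolution-free) or a composition/\(\partial_t\)/\(\partial_{\Uv}\) term built using some \(\vec\epsilon_{q'}\), \(q'<q\); whenever such a building block carries a convolution it is, by the induction hypothesis, already at least quadratic in the forcing, so the resulting residual term is too. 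Hence every residual term linear in the forcing is convolution-free, and when kept it enters \(\Fv_c,G,H\) convolution-free; every convolution that survives into \(\Fv\), \(G\), \(H\) therefore sits in a term at least quadratic in the forcing, which is the asserted statement. The main obstacle is exactly this third step: carrying the ``linear versus higher in the forcing'' distinction through the homological iteration consistently with the sign classification of the denominators, so that the design choice of keeping all resonant (and near-resonant pure-\(\Xv\)) monomials convolution-free is admissible at every order and the retained memory/anticipation integrals stay uniformly fast.
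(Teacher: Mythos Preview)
Your first two paragraphs follow the paper's route: order-by-order normal-form induction, the scalar homological equation on each monomial, the sign split on $\Re(\lambda-\mu)$, and fast convolutions for removed monomials. The one technicality you skip is that $A,B,C$ need not be diagonalisable; the paper first passes to upper-triangular form and then argues (the ``Off-diagonal neglect'' paragraph after \cref{eq:stsexd}) that the super-diagonal entries are absorbed by further iterations within each degree, so that working with the diagonal rates $\alpha_i,\beta_j,\gamma_k$ is legitimate.

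The genuine gap is in your third paragraph: the inductive claim ``every residual term linear in the forcing is convolution-free'' fails from degree~$3$. Take an \emph{autonomous} coupling $x_1y_1$ in~$\fv$ and a forced term $a(t)x_1^2$ in the $y_1$-component of~$\gv$. At degree~$2$ the latter has no $\Yv$-factor, so your scheme removes it and $\vec\epsilon_2^{\,y_1}$ acquires $(\Z{\nu}a)X_1^2$ with $\nu=\beta_1-2\alpha_1$: linear in the forcing, yet carrying a convolution. At degree~$3$ the $\Xv$-residual, via expanding $x_1y_1=(X_1+\cdots)(Y_1+\vec\epsilon_2^{\,y_1}+\cdots)$, then contains $X_1\cdot(\Z{\nu}a)X_1^2=(\Z{\nu}a)X_1^3$. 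This is a pure-$\Xv$ monomial, hence \emph{kept} in your scheme, and it enters~$\Fv_c$ with a fast memory integral while still linear in~$a$. So the iteration by itself does not prevent linear-in-forcing convolutions from reaching~$\Fv$ (and analogous examples hit $G,H$).

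The paper's missing ingredient (\cref{sec:sttsd}, case~\ref{i:stsepx}) is an active splitting: when a resonant residual coefficient is itself a single convolution $\Z{\nu}\tilde a$, use
\[
\Z{\nu}\tilde a=\frac{\sgn\Re\nu}{\nu-\mu}\,\tilde a+\frac{1}{\nu-\mu}\,e^{\mu t}\frac{d}{dt}\big(e^{-\mu t}\,\Z{\nu}\tilde a\big),
\]
assign only the convolution-free $\tilde a$-piece to~$\Fv$, and push the convolved piece into the transform. Iterating this splitting peels away single convolutions; the irreducible remainder has the shape $\tilde a_1(t)\,\Z{\nu}\tilde a_2$, already quadratic in the forcing. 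It is this deliberate redistribution, not a structural invariant of the homological iteration, that makes the last sentence of the theorem true.
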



\begin{proof} 
We prove \cref{thm:stsenfp} via algebraic results deduced in the three subsequent subsections.

As a preliminary step, linearly change basis for each of \xv~and~\Xv, \yv~and~\Yv, and \zv~and~\Zv, so that the systems~\cref{eqs:stsesde} and~\cref{eqs:stsesdenf} then have matrices~\(A\), \(B\) and~\(C\) in upper-triangular form (a Jordan form or a Schur decomposition are two examples).
Then prove by induction.
First, the lemma is trivially true for order $p=2$ under the identity transform $\xv=\Xv$, $\yv=\Yv$, and $\zv=\Zv$ with normal form system $\dot \Xv=A \Xv$, $\dot \Yv=B \Yv$, and $\dot \Zv=C \Zv$.
Higher order~\(p\) only changes this identity transform by higher order, polynomial, corrections and so in this sense they are all `near identity'.  

Second, assume there exists such a coordinate transform and normal form for some~$p$, $2\leq p<\fp$\,; that is, the residuals of~\cref{eqs:stsesde} are~\Orc{p} upon substituting \cref{eq:stsexform}+\cref{eqs:stsesdenf}.
Seek corrections~\Orc{p}, indicated by hats, that lead to residuals of~\Orc{p+1}:  let 
\begin{subequations}\label{eqs:corsub}%
\begin{align}
&\xv=\xv(t,\Xv,\Yv,\Zv)+\hat \xv(t,\Xv,\Yv,\Zv),
\\& \yv=\yv(t,\Xv,\Yv,\Zv)+\hat \yv(t,\Xv,\Yv,\Zv),
\\& \zv=\zv(t,\Xv,\Yv,\Zv)+\hat \zv(t,\Xv,\Yv,\Zv),
\\\text{where}\quad& \dot \Xv=A \Xv+\Fv(t,\Xv,\Yv,\Zv)+\hat\Fv(t,\Xv,\Yv,\Zv),
\\& \dot \Yv=B \Yv+G(t,\Xv,\Yv,\Zv)\Yv+\hat G(t,\Xv,\Yv,\Zv)\Yv,
\\& \dot \Zv=C \Zv+H(t,\Xv,\Yv,\Zv)\Zv+\hat H(t,\Xv,\Yv,\Zv)\Zv. 
\end{align}
\end{subequations}
\cref{sec:sttsd} establishes suitable~\(\hat\xv\) and~\(\hat\Fv\) exist for the center components;
\cref{sec:sttfd} establishes  suitable~\(\hat\yv\) and~\(\hat G\) exist for the fast stable components;
and \cref{sec:sttud} establishes  suitable~\(\hat\zv\) and~\(\hat H\) exist for the fast unstable components.
Hence choosing such a suitable~\cref{eq:stsexform}+\cref{eqs:stsesdenf} satisfies the system~\cref{eqs:stsesde} to residuals~\Orc{p+1}.
By Proposition~3.6 of \cite{Potzsche2006}, it follows that the difference between a system~\cref{eq:stsexform}+\cref{eqs:stsesdenf} and the system~\cref{eqs:stsesde} is also~\Orc{p+1}.
By induction, \cref{thm:stsenfp} holds for systems~\cref{eqs:stsesde} with upper-triangular matrices~\(A\), \(B\) and~\(C\).
By reverting the preliminary change of basis, \cref{thm:stsenfp} holds for general matrices.
\end{proof}

\paragraph{Require convolutions} 
In general we need to invoke convolutions in time of the direct \(t\)-dependence in terms (for `frozen'~\Xv, \Yv\ and~\Zv), convolutions that depend upon the eigenvalues of the three matrices (as in the example~\eqref{eqs:stsetoyxy}).  
These convolutions encapsulate either memory or anticipation of the time-dependence over the various short time scales in the hyperbolic variables~$\yv$ and~\zv. 
For any parameter~$\mu$ (possibly complex), for sufficiently well 
behaved (e.g., integrable) time dependent functions~$a(t)$, and for two fixed chosen times~\(t_\pm\), define the \emph{convolution}%
\begin{equation}
    \Z{\mu}a:=
    \begin{cases}
        \int_{t_-}^t \exp[\mu(t-\tau)]a(\tau)\,d\tau\,,
        &\Re\mu<-\muc\,, \\[1ex]
        \int_t^{t_+} \exp[\mu(t-\tau)]a(\tau)\,d\tau\,,
        &\Re\mu>+\muc\,, \\[1ex]
        \text{undefined,}&|\Re\mu|\leq\muc\,,             
    \end{cases}
    \label{eq:stsezmuf}
\end{equation}
where \cref{def:ocf} determines the chosen cut-off rate~\muc\ in this definition.
Such convolutions are bounded for at least bounded~$a(t)$: more specifically, we \emph{choose} to invoke such convolutions only when the result is bounded (so that the asymptotic properties are uniform over times of interest).  
Our derivations only require that such convolutions~\cref{eq:stsezmuf} exist and are differentiable.
We use that the convolutions give a bounded solution to the~\ode
\begin{align}&
    \frac{d\ }{dt}(\Z{\mu}a)-\mu(\Z{\mu}a)=-(\sgn\Re\mu)a:
    \label{eq:stseddtconv}
\end{align}
with $\Re\mu<0$ the convolution~$\Z\mu$ integrates over the past (a memory); 
with $\Re\mu>0$ the convolution~$\Z\mu$ integrates into the future (anticipation); 
both integrate over a short hyperbolic-time scale of order~$1/|\Re\mu|$.

The freedom to choose integration end-points~\(t_\pm\) is just one of the contributions to non-uniqueness in the constructed coordinate transform.  
To ensure boundedness for every appropriate rate~\(\mu\), such as in scenarios where matrices have widely varying eigenvalues, then we need \(t_-\leq t\leq t_+\) and hence we might choose~\(t_\pm\) to be the end-points of the time interval of interest, \(\TT_\mu=[t_-,t_+]\).
Alternatively, in scenarios where the time variations in the system, represented by~\(a(t)\) in~\eqref{eq:stsezmuf}, are known to be well enough behaved for all time, then we might choose \(t_\pm=\pm\infty\).
The differences between such choices only affects the constructed coordinate transform, and hence the derived invariant manifolds, in `boundary layers' in time near the start and end of~\(\TT_\mu\)---boundary layers that are of thickness in time of order~$1/|\Re\mu|$.

\subsection{Transform the center dynamics}
\label{sec:sttsd}

This subsection contributes to the proof of \cref{thm:stsenfp} via straightforward extensions of established arguments.
For the center dynamics, each iteration towards constructing a time dependent coordinate transform substitutes sought corrections~\cref{eqs:corsub} (for the transform and the evolution) into the governing \ode~\cref{eq:stsesdex} for the center variables.

\paragraph{A homological equation governs corrections}
First, in the right-hand side of the \ode~\cref{eq:stsesdex}, the nonlinear function \(\fv(t,\xv+\hat\xv,\yv+\hat\yv,\zv+\hat\zv)=\fv(t,\xv,\yv,\zv)+\Orc{p+1}\) by a multivariate Taylor's theorem since \(\hat\xv,\hat\yv,\hat\zv=\Orc{p}\) and derivatives \(\fv_\xv,\fv_\yv,\fv_\zv=\Orc{1}\) by \cref{ass:givensys}.
Here and throughout, such subscripts represent partial derivatives: for example, the Jacobian matrix \(\fv_\xv=\begin{bmatrix} \D{x_j}{f_i} \end{bmatrix}\).

Second, the time derivative on the left-hand side of~\cref{eq:stsesdex} is more complicated: by the chain rule the corrected time derivative
\begin{align*}
\dot \xv&=(\xv_t+\hat\xv_t)+(\xv_\Xv+\hat \xv_\Xv)\dot \Xv+(\xv_\Yv+\hat \xv_\Yv)\dot \Yv +(\xv_\Zv+\hat \xv_\Zv)\dot \Zv
\\&=(\xv_t+\hat\xv_t)+(\xv_\Xv+\hat \xv_\Xv)(A \Xv+\Fv+\hat \Fv)+(\xv_\Yv+\hat \xv_\Yv)(B+G+\hat G)\Yv 
\\&\quad{}+(\xv_\Zv+\hat \xv_\Zv)(C+H+\hat H)\Zv
\\&=(\xv_t+\hat\xv_t)+{\xv_\Xv(A \Xv+\Fv)+\xv_\Yv(B+G)\Yv+\xv_\Zv(C+H)\Zv}
\\&\quad{}
+\underbrace{\hat \xv_\Xv(A \Xv+\Fv)}_{=A \Xv\hat \xv_\Xv+\Orc{p+1}}
+\underbrace{\xv_\Xv\hat \Fv}_{=\hat \Fv+\Orc{p+1}}
+\underbrace{\hat \xv_\Xv\hat \Fv}_{\Orc{2p-1}}
+\underbrace{\hat \xv_\Yv(B+G)\Yv}_{=B \Yv\hat \xv_\Yv+\Orc{p+1}}
\\&\quad{}
+\underbrace{\xv_\Yv\hat G\Yv}_{\Orc{p+2}}
+\underbrace{\hat \xv_\Yv\hat G\Yv}_{\Orc{2p}} 
+\underbrace{\hat \xv_\Zv(C+H)\Zv}_{=C \Zv\hat \xv_\Zv+\Orc{p+1}}
+\underbrace{\xv_\Zv\hat H\Zv}_{\Orc{p+2}}
+\underbrace{\hat \xv_\Zv\hat H\Zv}_{\Orc{2p}} \,.
\end{align*}
As indicated, omit products of small corrections, and approximate the coefficients of the remaining small corrections by their leading order term (for example, $\D{\Xv}{\xv}\approx I$ and $\D{\Yv}{\xv}\approx \D\Zv\xv\approx0$).
Third, equating the two sides with a little rearrangement, the $\xv$-equation~\cref{eq:stsesdex} becomes
\begin{align}&
\hat\xv_t-A\hat \xv+
\hat \xv_\Xv A\Xv +\hat \xv_\Yv B\Yv+\hat\xv_\Zv C\Zv+\hat \Fv
\nonumber\\&{}
=\underbrace{A \xv+\fv\underbrace{{}-\xv_t-\xv_\Xv(A \Xv+\Fv)-\xv_\Yv(B+G)\Yv
-\xv_\Zv(C+H)\Zv}_{-d\xv/dt}}_{\res_{\protect\cref{eq:stsesdex},p}}
+\Orc{p+1}.
\label{eq:res3xp}
\end{align}

\paragraph{Solve homological equation~\cref{eq:res3xp} to find corrections}
As explained at the end of this subsection, at the expense of generally increasing the number of required iterations, we neglect in the left-hand side the off-diagonal terms in the upper triangular matrices~\(A\), \(B\) and~\(C\).
Also, by \cref{ass:givensys}, the induction assumption, and the multivariate Lagrange Remainder Theorem in~\((\Xv,\Yv,\Zv)\), the residual on the right-hand side of~\cref{eq:res3xp} is written as a polynomial 
\(\res_{\protect\cref{eq:stsesdex},p}
=\sum(\text{terms})+\Orc{p+1}\) in which each term is of the form
of a \(p\)th~order polynomial term with coefficient vector~\(\av(t)\), namely
\( \av(t) \Xv^\pv\Yv^\qv\Zv^\rv
=\av(t)\prod_{i=1}^m X_i^{p_i}\,
\prod_{j=1}^n Y_j^{q_j}\,
\prod_{k=1}^\ell Z_k^{r_k}
\)
for exponent multi-indices \(\pv=(p_1,\ldots,p_m)\in \mathbb N_0^m\), and similarly for~\qv\ and~\rv, such that \(|\pv|+|\qv|+|\rv|=p\).

For each such \(p\)th~order term on the right-hand side, the $\kappa$th~component of the homological equation~\cref{eq:res3xp} is then 
\begin{align}&
    \hat F_\kappa+\D t{\hat x_\kappa}-\alpha_\kappa\hat x_\kappa 
    +\sum_{i=1}^m \alpha_iX_i\D{X_i}{\hat x_\kappa} 
    +\sum_{j=1}^n \beta_jY_j\D{Y_j}{\hat x_\kappa} 
    +\sum_{k=1}^\ell \gamma_kZ_k\D{Z_k}{\hat x_\kappa} 
    \nonumber\\&
    =a_\kappa(t) \Xv^\pv\Yv^\qv\Zv^\rv .
    \label{eq:stsexd}
\end{align}
Because of the special form of the `homological' operator on the left-hand side of~\cref{eq:stsexd}, for each right-hand side term we seek corresponding corrections $\hat F_\kappa=\ff(t)\Xv^{\pv}\Yv^{\qv}\Zv^\rv$ and $ \hat x_\kappa=\fx(t)\Xv^{\pv}\Yv^{\qv}\Zv^\rv$.
Then~\cref{eq:stsexd} becomes
\begin{equation}
    \ff+\dot \fx -\mu \fx=a_\kappa(t)
    \quad\text{where }
    \mu:= \alpha_\kappa-\sum_{i=1}^m p_i\alpha_j
    -\sum_{j=1}^n q_j\beta_j
    -\sum_{k=1}^\ell r_k\gamma_k\,.
    \label{eq:stsemux}
\end{equation}
There are many possible ways to choose the coordinate transform corrections~$\fx$ and~$\ff$ as equation~\cref{eq:stsemux} forms an underdetermined system.
It is up to our qualitative aims to decide what corrections are desirable to implement among all the possibilities.  
One may make choices to preserve certain symmetries or topological properties.
The following choices lead to, in some sense, the minimal coordinate transform necessary.

Two cases typically arise depending upon the real part of the rate~$\mu$.
\begin{enumerate}
 \item The cases when $\mu$~is hyperbolic, \(|\Re\mu|>\muc\), occur when at least one of the exponents in~\qv\ or~\rv\ is non-zero, see the next paragraph.
Accepting possible anticipation in the coordinate transform (such as the second to last term in~\eqref{eq:stsetoyxyy}), we assign $\fx=\Z\mu \av$\,, and do not change the $\Xv$~evolution, $\ff=0$\,.
The convolution in the coordinate correction~\fx\ only involves convolutions over hyperbolic-time scales.

Conversely, if \(\qv=\rv=\ov\), then from~\cref{eq:stsemux}
\begin{eqnarray*}
|\Re\mu|&=&\left|
\Re\alpha_\kappa-\sum_{i=1}^m p_i\Re\alpha_i
    +\sum_{j=1}^n 0\Re(-\beta_j)
    -\sum_{k=1}^\ell 0\Re\gamma_k \right|
    \\&\leq&(|\pv|+1)\alpha
    =(p+1)\alpha
    \leq\fp\alpha
    \leq\muc\,.
\end{eqnarray*}
Thus \(\mu\) is not hyperbolic by the separation \cref{def:ocf}.
Hence for hyperbolic~\(\mu\), it must be that at least one exponent in~\qv\ or~\rv\ is non-zero.

\item \label{i:stsepx} The `resonant' center case,  $|\Re\mu|\leq\muc$\,, only arises in two circumstances. 
\begin{itemize}
\item Firstly, when the hyperbolic exponents \(\qv=\rv=\ov\) as justified above, in which case there are no hyperbolic variables~\Yv\ and~\Zv\ in the term.
\item Secondly, it may arise when both \(\qv\neq\ov\) and \(\rv\neq\ov\), in which case the term always has at least one stable variable~\(Y_j\) and at least one unstable variable~\(Z_k\).
To eliminate the other possibilities consider the possibilty \(\qv\neq\ov=\rv\), then from~\cref{eq:stsemux}, 
\begin{eqnarray*}
\Re\mu&=&\Re\alpha_\kappa-\sum_{i=1}^m p_i\Re\alpha_i
    -\sum_{j=1}^n q_j\Re\beta_j
    -\sum_{k=1}^\ell 0\,\Re\gamma_k 
\\&\geq&-(|\pv|+1)\alpha+|\qv|\beta 
\\&\geq& -p\alpha+\beta \geq-(\fp-1)\alpha+\beta >\muc\,.
\end{eqnarray*}
Alternatively, when \(\rv\neq\ov=\qv\) then a similar derivation gives the bound \(\Re\mu\leq-\beta+(\fp-1)\alpha<-\muc\).
Thus these two cases cannot give center rates~\(\mu\).
\end{itemize}
This center case of small~\(|\Re\mu|\) implies that convolutions \(\Z\mu a_\kappa(t)\) are generally large due to the relatively large support of the exponential~\(e^{\mu t}\) in the convolution.
We need to avoid the possibility of such large terms.
Thus at first sight in solving~\cref{eq:stsemux}, $\ff+\dot\fx-\mu\fx=a_\kappa$\,, a generic acceptable solution is to correct the \Xv~evolution with $\ff=a_\kappa$ and leave the coordinate transform unchanged with $\fx=0$\,.
    
But recall that we want to avoid hyperbolic-time integrals in the center evolution~\(\dot\Xv\); that is, we want to avoid assigning to~\ff\ terms in \(\Z\nu{\tilde a}(t)\).
Consider the case when the forcing~$a_\kappa(t)$ has the form of a hyperbolic-time convolution $a_\kappa=\Z\nu{\tilde a}(t)$ for some~$\tilde a(t)$ and some rate~\(\nu\).
From~\cref{eq:stseddtconv} deduce 
\begin{eqnarray*}
&&\dot a_\kappa=\nu a_\kappa-(\sgn\Re\nu)\tilde a
\\&\iff&e^{\mu t}\de t{}(e^{-\mu t}a_\kappa)=\dot a_\kappa-\mu a_\kappa=(\nu-\mu) a_\kappa-(\sgn\Re\nu)\tilde a
\\&\iff&
a_\kappa=\frac{\sgn\Re\nu}{\nu-\mu}\tilde a
+\frac{1}{\nu-\mu}e^{\mu t}\de t{}(e^{-\mu t}a_\kappa)
\end{eqnarray*}
Since~\cref{eq:stsemux} may be written as \(\ff+e^{\mu t}\de t{}(e^{-\mu t}\fx)=a_\kappa\)\,,
to avoid hyperbolic-time memory integrals in the center $\Xv$~evolution, set $\ff={\tilde a}(t)(\sgn\Re\nu)/(\nu-\mu)$ and $\fx=a_\kappa/(\nu-\mu)=(\Z\nu{\tilde a})/(\nu-\mu)$ (which assigns the hyperbolic-time convolution to the coordinate transform).
If ${\tilde a}(t)$~in turn is a hyperbolic-time convolution, then continue the above splitting recursively.
    
When the coefficient~$a_\kappa(t)$ is a quadratic product of convolutions, then one may choose similar splittings to eliminate all hyperbolic integrals from the center variables \emph{except} for terms with coefficients of the form ${\tilde a}_1(t)\Z\nu{\tilde a}_2(t)$ where ${\tilde a}_1$~has no convolutions.
Algebraic transformations cannot eliminate such terms \cite[]{Chao95}. 
For now accept such quadratic non-autonomous terms.%
\footnote{Similar considerations apply to higher order terms in the time dependence, but for simplicity we stop at quadratic effects.}
Such quadratic terms encode mechanisms that cause rapid time fluctuations to generate potentially important mean drift effects on \text{the macroscale dynamics.}

\end{enumerate}

This completes the center variables contribution to the inductive proof of \cref{thm:stsenfp}.

\paragraph{Off-diagonal neglect}
Recall the earlier recommendation to omit a term in~\cref{eq:res3xp}: the term $(\D{\Xv}{\hat{\xv}})A\Xv$, equivalently $(\D{X_i}{ \hat x_\kappa})A_{i,j}X_j$, should appear in the left-hand side.
However, its omission is acceptable when the matrix~$A$ is upper triangular (invoked at the start of the proof of \cref{thm:stsenfp}) as then any term introduced which involves~$X_i$ only generates extra terms which are lower order in~$X_i$.
Such extra terms increase the order of~$X_j$ for $j>i$\,, through the off-diagonal terms in~$A$.
However, successive iterations generate new terms involving only fewer factors of~$X_i$ and so iteration steadily accounts for the introduced terms.
Similarly for the $\Yv$ and~\Zv\ variables when the linear operators~$B$ and~\(C\) are triangular.
Discussing equation~\cref{eq:stseyd} for corrections is sufficient.

\subsection{Transform the rapid stable dynamics}
\label{sec:sttfd}

This subsection contributes a second part to the proof of \cref{thm:stsenfp}.
For the stable dynamics, each iteration towards constructing a time dependent coordinate transform substitutes sought corrections~\cref{eqs:corsub} to the transform and the evolution into the governing \ode~\cref{eq:stsesdey} for the stable variables.

\paragraph{A homological equation guides corrections}
First, in the right-hand side of the \ode~\cref{eq:stsesdey}, the nonlinear function \(\gv(t,\xv+\hat\xv,\yv+\hat\yv,\zv+\hat\zv)=\gv(t,\xv,\yv,\zv)+\Orc{p+1}\) by a multivariate Taylor's theorem since \(\hat\xv,\hat\yv,\hat\zv=\Orc{p}\) and derivatives \(\gv_\xv,\gv_\yv,\gv_\zv=\Orc{1}\) by \cref{ass:givensys}.

Second, the time derivative on the left-hand side of~\cref{eq:stsesdey} is more complicated: by the chain rule the corrected time derivative
\begin{align*}
\dot \yv&=(\yv_t+\hat\yv_t)+(\yv_\Xv+\hat \yv_\Xv)\dot \Xv+(\yv_\Yv+\hat \yv_\Yv)\dot \Yv +(\yv_\Zv+\hat \yv_\Zv)\dot \Zv
\\&=(\yv_t+\hat\yv_t)+(\yv_\Xv+\hat \yv_\Xv)(A \Xv+\Fv+\hat \Fv)+(\yv_\Yv+\hat \yv_\Yv)(B+G+\hat G)\Yv 
\\&\quad{}+(\yv_\Zv+\hat \yv_\Zv)(C+H+\hat H)\Zv
\\&=(\yv_t+\hat\yv_t)+{\yv_\Xv(A \Xv+\Fv)+\yv_\Yv(B+G)\Yv+\yv_\Zv(C+H)\Zv}
\\&\quad{}
+\underbrace{\hat \yv_\Xv(A \Xv+\Fv)}_{=A \Xv\hat \yv_\Xv+\Orc{p+1}}
+\underbrace{\yv_\Xv\hat \Fv}_{=\Orc{p+2}}
+\underbrace{\hat \yv_\Xv\hat \Fv}_{\Orc{2p}}
+\underbrace{\hat \yv_\Yv(B+G)\Yv}_{=B \Yv\hat \yv_\Yv+\Orc{p+1}}
\\&\quad{}
+\underbrace{\yv_\Yv\hat G\Yv}_{\hat G\Yv+\Orc{p+1}}
+\underbrace{\hat \yv_\Yv\hat G\Yv}_{\Orc{2p-1}} 
+\underbrace{\hat \yv_\Zv(C+H)\Zv}_{=C \Zv\hat \yv_\Zv+\Orc{p+1}}
+\underbrace{\yv_\Zv\hat H\Zv}_{\Orc{p+2}}
+\underbrace{\hat \yv_\Zv\hat H\Zv}_{\Orc{2p}} \,.
\end{align*}
As indicated, omit products of small corrections, and approximate coefficients of the remaining small corrections by their leading order term (for example, $\D{\Yv}{\yv}\approx I$ and $\D{\Xv}{\yv}\approx\D\Zv\yv\approx 0$).
Third, equating the two sides with a little rearrangement, the $\yv$-equation~\cref{eq:stsesdey} becomes
\begin{align}&
\hat\yv_t-B\hat \yv+
\hat \yv_\Xv A\Xv +\hat \yv_\Yv B\Yv+\hat\yv_\Zv C\Zv+\hat G\Yv
\nonumber\\&{}
=\underbrace{B \yv+\gv\underbrace{{}-\yv_t-\yv_\Xv(A \Xv+\Fv)-\yv_\Yv(B+G)\Yv
-\yv_\Zv(C+H)\Zv}_{-d\yv/dt}}_{\res_{\protect\cref{eq:stsesdey},p}}
+\Orc{p+1}.
\label{eq:res3yp}
\end{align}

\paragraph{Solve homological equation~\cref{eq:res3yp} to find corrections}
Recall we neglect in the left-hand side the off-diagonal terms in the triangular matrices~\(A\), \(B\) and~\(C\).
By \cref{ass:givensys} and the induction assumption, the residual on the right-hand side of~\cref{eq:res3yp} is a polynomial 
\(\res_{\protect\cref{eq:stsesdey},p}
=\sum(\text{terms})+\Orc{p+1}\) in which each \(p\)th~order term is of the form
a polynomial term with coefficient vector~\(\bv(t)\), namely
\(\bv(t) \Xv^\pv\Yv^\qv\Zv^\rv\) such that \(|\pv|+|\qv|+|\rv|=p\).

For each such \(p\)th~order term on the right-hand side, the $\kappa$th~component of the homological equation~\cref{eq:res3yp} is, for \(\hat G_\kappa\) the \(\kappa\)th component of~\(\hat G\Yv\),  
\begin{align}&
    \hat G_\kappa+\D t{\hat y_\kappa}-\beta_\kappa\hat y_\kappa 
    +\sum_{i=1}^m \alpha_iX_i\D{X_i}{\hat y_\kappa} 
    +\sum_{j=1}^n \beta_jY_j\D{Y_j}{\hat y_\kappa} 
    +\sum_{k=1}^\ell \gamma_kZ_k\D{Z_k}{\hat y_\kappa} 
    \nonumber\\&
    =b_\kappa(t) \Xv^\pv\Yv^\qv\Zv^\rv .
    \label{eq:stseyd}
\end{align}
Because of the special form of the `homological' operator on the left-hand side of~\cref{eq:stseyd}, for each right-hand side term seek corresponding corrections $\hat G_\kappa=\fg(t)\Xv^{\pv}\Yv^{\qv}\Zv^\rv$ and $ \hat y_\kappa=\fy(t)\Xv^{\pv}\Yv^{\qv}\Zv^\rv$.
Then~\cref{eq:stseyd} becomes
\begin{equation}
    \fg+\dot \fy -\mu \fy=b_\kappa(t)
    \quad\text{where }
    \mu:= \beta_\kappa-\sum_{i=1}^m p_i\alpha_i
    -\sum_{j=1}^n q_j\beta_j
    -\sum_{k=1}^\ell r_k\gamma_k\,.
    \label{eq:stsemuy}
\end{equation}
Among the many possible ways to choose the coordinate transform corrections~$\fy$ and~$\fg$, the following choices lead to a suitable coordinate transform necessary to achieve our modelling aims.

Three cases arise depending upon the real part of the rate~$\mu$.
\begin{enumerate}
 \item Consider the resonant case of center~$\mu$, \(|\Re\mu|<\muc\)\,. 
To satisfy~\cref{eq:stsemuy}, namely $\fg +\dot \fy-\mu\fy=b_\kappa$, the mean and some types of fluctuations in~$b_\kappa(t)$ must be generally assigned to~$\fg$ as generally they would give rise to large secular terms in~$\fy$. 
For example, when the fluctuating part of~$b_\kappa(t)$ is noisy (stochastic) then integrating it into the coordinate transform~$\fy$ would almost surely generate unallowable square-root growth. 
Thus the generic solution is $\fg =b_\kappa$ and $\fy=0$\,, that is, assign $b_\kappa(t)\Xv^{\pv } \Yv^{\qv }\Zv^\rv$ to the $\Yv$~evolution and nothing into the coordinate transform~$\yv$.
 
Since~$\Re\beta_\kappa\leq-\beta$\,,
this case of center~$\mu$ only arises when at least one of the exponents~$\qv $ of~$\Yv$ is positive in order for the sum in~\cref{eq:stsemuy} to have a center real-part.
Hence, there will be at least one $Y_j$~factor in updates~$\hat{G}$ to the $\Yv$-evolution, and so we maintain the form \(G\Yv\) in the right-hand side of~\(\dot\Yv\).

 \item For stable $\Re\mu<-\muc$\,, a solution of~\cref{eq:stsemuy} is to place all the forcing into the coordinate transform, $\fy=\Z\mu b_\kappa$\,, and not to introduce a component into the $\Yv$-evolution, $\fg =0$\,.
As $\Re\mu<-\muc$\,, the convolution is over the past history of the forcing~$b_\kappa(t)$; the convolution encodes a memory of the forcing over a time scale of~$1/|\Re\mu|$.

\item \label{i:stsemud3}For unstable $\Re\mu>\muc$\,, and accepting anticipation in the transform (such as the second to last term in~\eqref{eq:stsetoyxyy}), modify the coordinate transform by setting $\fy=\Z\mu b_\kappa$\,, and do not change the $\Yv$-evolution, $\fg =0$\,.
    
\end{enumerate}
Consequently, this establishes equation~\cref{eq:stsesdenfyy} in \cref{thm:stsenfp}.

\subsection{Transform the rapid unstable dynamics}
\label{sec:sttud}

For the unstable dynamics, the argument corresponds directly to the argument of \cref{sec:sttfd} for the stable variables with appropriate exchange of symbols and inequalities.
This establishes equation~\cref{eq:stsesdenfzz}; that is, {we are always able to find a coordinate transform, to any specified order, which maintains the form~\cref{eq:stsesdenfzz}.}
Thus this subsection completes the proof of \cref{thm:stsenfp}.

\subsection{Centre dynamics do not anticipate}
\label{sec:sddnntatn}

Despite anticipatory convolutions often appearing in the coordinate transform~\cref{eq:stsexform}, this subsection establishes that, as in the example~\eqref{eqs:stsetoyXY}, no anticipation appears in the center dynamics because anticipatory convolutions always involve hyperbolic variables. 
\begin{full}
\cite{Bensoussan95} correspondingly showed it is not necessary to anticipate noise on a stochastic inertial manifold.
\end{full}

In the previous subsections, the anticipatory convolutions only occur when the rate~$\Re\mu>\muc$\,.
But for both the center and the hyperbolic components, this rate occurs only when at least one hyperbolic variable, $Y_j$ or~\(Z_k\), appears in the term under consideration.
Moreover, there is no ordinary algebraic operation that reduces the number of $\Yv$ and~\Zv\ factors in any term: 
potentially the time derivative operator might,
\begin{displaymath}
    \frac{d\ }{dt}=\D t{}
    +\sum_{\ell,k}X_kA_{\ell,k}\D {X_\ell}{}
    +\sum_{\ell,k}Y_kB_{\ell,k}\D {Y_\ell}{}
    +\sum_{\ell,k}Z_kC_{\ell,k}\D {Z_\ell}{}
    \,,
\end{displaymath}
but although in the algebra $X_\ell$~variables may be replaced by~$X_k$, the $Y_\ell$~variables may be replaced by~$Y_k$, and the $Z_\ell$~variables may be replaced by~$Z_k$, nonetheless the same number of variables are retained in each term and a $\Yv$ or~\Zv~variable is never replaced by an $\Xv$~variable.
The reason is that the center and hyperbolic dynamics are \emph{linearly} decoupled in the original system~\cref{eqs:stsesde}.
Consequently all anticipatory convolutions appear in terms with at least one component of the hyperbolic variables~$\Yv$ or~\Zv.

In essence, such anticipation, and also memory integrals, are a relic of the need by extant forward theory for time limits \(t\to\pm\infty\)\,.

Because of the form of the evolution~\cref{eq:stsesdenfxx} of the center modes~$\Xv$, the evolution~\cref{eq:stsesdenfxx} is also free of anticipatory convolutions within both the center-stable and center-unstable manifolds (\cref{def:nfnfim}).
However, as seen in examples, there may be anticipatory convolutions in the term \(\cF(t,\Xv,\Yv,\Zv)\Yv\Zv\) of~\cref{eq:stsesdenfxx}.
Further, although the coordinate transform~\cref{eq:stsexform} has anticipatory convolutions, on the center manifold $\Yv=\Zv=\ov $ there are none.
These algebraic deductions lead to the following corollary that extends straightforwardly that of \cite{Roberts06k} [Proposition~2].

\begin{corollary}
    \label{thm:stsemem}
Although \emph{anticipation} may be invoked,  throughout both  \(\Zv=\ov\) (the center-stable manifold, \cref{def:nfnfim}) and  \(\Yv=\ov\) (the center-unstable manifold), there \emph{need not} be any \emph{anticipation} in the dynamics~\cref{eq:stsesdenfxx} of the center modes in the non-autonomous normal form of the system~\cref{eqs:stsesde}.
Moreover, in $\Yv=\Zv=\ov$ (in the center manifold) the time 
dependent \emph{coordinate transform}~\cref{eq:stsexform} need not have anticipation.
\end{corollary}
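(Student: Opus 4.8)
The plan is to make the informal discussion of \cref{sec:sddnntatn} precise by tracking, through the induction of \cref{thm:stsenfp}, exactly which monomial coefficients acquire an \emph{anticipatory} convolution, i.e.\ a future integral \(\Z\mu a\) coming from the \(\Re\mu>\muc\) branch of~\cref{eq:stsezmuf}. Two observations control this. First, by~\cref{eq:stseddtconv} the only operations used in the iteration---multiplication by a \(t\)-independent multinomial in \(\Xv,\Yv,\Zv\), and \(d/dt\)---never turn a non-anticipatory coefficient into an anticipatory one: \(\partial_\Xv,\partial_\Yv,\partial_\Zv\) act on the monomial only, and \(\partial_t(\Z\nu a)=\nu(\Z\nu a)-(\sgn\Re\nu)a\) keeps a past convolution a past convolution plus a convolution-free term. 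So an anticipatory convolution can enter a coefficient only through a homological solve whose rate satisfies \(\Re\mu>\muc\). Second, for the center variables the rate~\cref{eq:stsemux} obeys \(|\Re\mu|\le(|\pv|+1)\alpha\le\fp\alpha\le\muc\) whenever \(\qv=\rv=\ov\) (the computation already in \cref{sec:sttsd}), so such a solve with \(\Re\mu>\muc\) is possible only in terms carrying at least one factor \(Y_j\) or \(Z_k\).

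With these two facts I would run the induction on difference order~\(p\). The inductive hypothesis is: after \(p\) iterations, every term of the transform~\cref{eq:stsexform} and normal form~\cref{eqs:stsesdenf} with \(\qv=\rv=\ov\) has an anticipation-free coefficient; equivalently \(\xv(t,\Xv,\ov,\ov)\) and \(\Fv_c\) carry no future integral. For the step, restrict the homological equations~\cref{eq:res3xp},~\cref{eq:res3yp} and their \(\zv\)-analogue to \(\Yv=\Zv=\ov\). Because the linear parts of~\cref{eqs:stsesde} are block-diagonal---center decoupled from un/stable---and because \(\Fv=\Fv_c(t,\Xv)+\cF\Yv\Zv\), \(G\Yv\) and \(H\Zv\) already carry their own \(\Yv,\Zv\) factors, \(d/dt\) and every chain-rule cross term (such as \(\xv_\Yv\hat G\Yv\), or the \(d/dt\) shuffles through the upper-triangular \(A,B,C\)) leave the \(\Yv,\Zv\)-factor count of a monomial non-decreasing; hence the \(\qv=\rv=\ov\) part of the step-\(p\) residual is just \(\fv,\gv,\hv\) composed with the (inductively anticipation-free) restricted transforms together with \(\partial_t,\partial_\Xv\) derivatives of anticipation-free expressions, so it is again anticipation-free. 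Solving it by the resonant-center splitting of \cref{sec:sttsd}---which, since \(\Re\mu<0\) there, passes into \(\fx\) and \(\ff\) only convolution-free pieces and past convolutions, recursively---keeps \(\hat\xv|_{\Yv=\Zv=\ov}\) and \(\hat\Fv_c\) anticipation-free, closing the induction; and the analogous bounds of \cref{sec:sttfd,sec:sttud} classify the rates of the \(\qv=\rv=\ov\) corrections of \(\yv\) and \(\zv\) so that each anticipatory convolution produced in~\cref{eq:stsexform} remains attached to a \(Y_j\) or \(Z_k\) factor.

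Finally I would read off the two assertions. On the center-stable manifold \(\Zv=\ov\) and on the center-unstable manifold \(\Yv=\ov\) the bilinear term \(\cF\Yv\Zv\) of~\cref{eq:stsesdenfxx} vanishes, so the center dynamics there is \(\dot\Xv=A\Xv+\Fv_c(t,\Xv)\) with \(\Fv_c\) anticipation-free---the first assertion; and evaluating the transform~\cref{eq:stsexform} at \(\Yv=\Zv=\ov\) leaves only the \(\qv=\rv=\ov\) corrections, from which the induction has removed every future integral---the second. This is the center/stable/unstable, non-autonomous extension of \cite{Roberts06k} [Proposition~2]. The main obstacle is the induction of the second paragraph: one must check with care that no algebraic manipulation in the construction can lower the \(\Yv,\Zv\)-factor count of a term, nor turn a past convolution into a future one through the repeated splittings forced on the quadratic non-autonomous coefficients that survive in the center evolution---a property that rests entirely on the \emph{linear} decoupling of the center block from the un/stable blocks in~\cref{eqs:stsesde}.
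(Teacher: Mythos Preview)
Your proposal takes essentially the same route as the paper's informal argument preceding the corollary: track which monomials acquire an anticipatory convolution, observe that such convolutions enter only through homological solves with $\Re\mu>\muc$, and argue that the construction never decreases the $\Yv,\Zv$-factor count of a term so that every anticipatory piece stays attached to some $Y_j$ or~$Z_k$. Your inductive framing makes the bookkeeping more explicit than the paper's discussion, but the substance is identical, including the reliance on the linear decoupling of the center block from the un/stable blocks.

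One step needs more care, however. You assert that ``the analogous bounds of \cref{sec:sttfd,sec:sttud} classify the rates of the $\qv=\rv=\ov$ corrections of $\yv$ and $\zv$ so that each anticipatory convolution produced \ldots\ remains attached to a $Y_j$ or $Z_k$ factor.'' For $\yv$ this is right: with $\qv=\rv=\ov$ the rate in~\cref{eq:stsemuy} satisfies $\Re\mu\le -\beta+(\fp-1)\alpha<-\muc$, so the solve uses a past convolution. But for $\zv$ the symmetry reverses the sign: with $\qv=\rv=\ov$ the analogous rate obeys $\Re\mu\ge\beta-(\fp-1)\alpha>\muc$, and since there is no $Z_k$ factor available to absorb the term into~$H\Zv$, the only bounded solve is the \emph{future} integral~$\Z\mu c_\kappa$. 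Thus $\zv(t,\Xv,\ov,\ov)$ can genuinely carry anticipation whenever $\hv(t,\xv,\yv,\ov)\not\equiv\ov$. The paper's own sentence that ``this rate occurs only when at least one un/stable variable \ldots\ appears'' shares this apparent oversight for the unstable block. Your argument for the first assertion of the corollary (that $\Fv_c$ is anticipation-free) is sound; it is the second assertion, about the full transform at $\Yv=\Zv=\ov$, where both you and the paper need either an additional argument or a restriction to systems with no unstable modes.
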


\begin{full}
Nonetheless, despite the center manifold itself not displaying any anticipation, in general we need to anticipate the time dependence in the system in order to be \emph{always able to find a coordinate transform, to any specified order, which maintains a center $\Xv$~evolution that is independent of the hyperbolic variables throughout either the center-stable or the center-unstable manifold}, namely the \ode\ system~\cref{eq:stsesdenfxx}. 
Consequently, the projection of initial conditions, and the exponential approach to a solution of the center variables in the center-stable manifold, is assured only via invoking such anticipation in the full normal form coordinate transformation.
\end{full}

\begin{full}
\paragraph{Extension to rational functions forms}
Many biochemical systems, such as the Michaelis--Menten kinetics for enzyme dynamics, naturally arise in a rational function form.
\begin{corollary} \label{cor:}
Consider a non-autonomous system in rational function form
\begin{subequations}\label{eqs:stsesderat}%
\begin{align}
    \dot{\xv}&=\frac{A\xv+\fv(t,\xv,\yv,\zv)}{1+f(t,\xv,\yv,\zv)}\,,
    \label{eq:stsesdexrat}\\
    \dot{\yv}&=\frac{B\yv+\gv(t,\xv,\yv,\zv)}{1+g(t,\xv,\yv,\zv)}\,,
    \label{eq:stsesdeyrat}\\
    \dot{\zv}&=\frac{C\zv+\hv(t,\xv,\yv,\zv)}{1+h(t,\xv,\yv,\zv)}\,,
    \label{eq:stsesdezrat}
\end{align}
\end{subequations}
under \cref{ass:givensys} and additionally where scalar functions~\(f,g,h\) are~\(\cC^\fp(d)\) and~\Orc{1}.
Then \cref{thm:stsenfp} applies with the above system~\cref{eqs:stsesderat} replacing~\cref{eqs:stsesde}.
\end{corollary}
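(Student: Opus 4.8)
The plan is to observe that each scalar-denominator equation of~\cref{eqs:stsesderat} becomes, after one elementary algebraic rearrangement, \emph{exactly} an equation of the affine-plus-strictly-nonlinear form~\cref{eqs:stsesde} with the \emph{same} linear operators~\(A\), \(B\) and~\(C\); then \cref{thm:stsenfp} applies verbatim, and the sought coordinate transform and normal form~\cref{eqs:stsesdenf} for the rational system are precisely those that \cref{thm:stsenfp} produces for the rearranged system. The only real content is to verify that the rearranged nonlinearities still satisfy the smoothness and `strictly nonlinear' requirements of \cref{ass:givensys}, possibly after a harmless shrinking of the domain.

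First I would subtract and re-add the linear part in each equation. For the center variables, suppressing arguments~\((t,\xv,\yv,\zv)\),
\begin{equation*}
\dot{\xv}=\frac{A\xv+\fv}{1+f}
=A\xv+\frac{(A\xv+\fv)-A\xv(1+f)}{1+f}
=A\xv+\frac{\fv-(A\xv)f}{1+f}\,,
\end{equation*}
and likewise for the fast stable and fast unstable variables. Hence, setting \(\tilde\fv:=(\fv-(A\xv)f)/(1+f)\), \(\tilde\gv:=(\gv-(B\yv)g)/(1+g)\) and \(\tilde\hv:=(\hv-(C\zv)h)/(1+h)\), the rational system~\cref{eqs:stsesderat} is literally of the form~\cref{eqs:stsesde} with the triple \((\tilde\fv,\tilde\gv,\tilde\hv)\) in place of \((\fv,\gv,\hv)\).

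Second I would check \cref{ass:givensys} for \((\tilde\fv,\tilde\gv,\tilde\hv)\). Items~1--4 involve only \(A\), \(B\), \(C\) and the order~\fp, which are unchanged, so they hold. For item~5: since \(f,g,h\) are continuous and~\Orc{1} they vanish at the origin, so the connected component containing the origin of \(\{|f|<1\}\cap\{|g|<1\}\cap\{|h|<1\}\cap d\) is a connected subdomain \(d'\subseteq d\) about the origin on which the denominators \(1+f,1+g,1+h\) are bounded away from~\(0\). On \(d'\) the map \(s\mapsto 1/(1+s)\) is \(C^\infty\) on a neighbourhood of the range of~\(f\), so \(1/(1+f)\in\cC^\fp(d')\); products, differences and quotients with non-vanishing denominator of \(\cC^\fp\) functions are \(\cC^\fp\), hence \(\tilde\fv,\tilde\gv,\tilde\hv\in\cC^\fp(d')\). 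Moreover each numerator is~\Orc{2}: for instance \(\fv=\Orc2\) by hypothesis and \((A\xv)f=\Orc1\cdot\Orc1=\Orc2\), and dividing by \(1+f=1+\Orc1\), which is nonzero at the origin, preserves order, so \(\tilde\fv=\Orc2\), and similarly \(\tilde\gv,\tilde\hv=\Orc2\). Thus \cref{ass:givensys} holds for the rearranged system on~\(d'\) and \cref{thm:stsenfp} gives the required near-identity multinomial coordinate transform and normal form.

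The main obstacle, such as it is, is purely one of bookkeeping rather than analysis: one must record the restriction of the domain from~\(d\) to~\(d'\) (innocuous, since \cref{ass:givensys} only asks for \emph{some} finite connected domain containing the origin, and the conclusions of \cref{thm:stsenfp} are asymptotic near the origin), and one must be careful that the quotient of \(\cC^\fp\) functions is again~\(\cC^\fp\) only where the denominator does not vanish --- which is exactly why the passage to~\(d'\) is needed. No reworking of the convolutions or the homological equations of \cref{sec:sttsd,sec:sttfd,sec:sttud} is required, because \cref{thm:stsenfp} is already stated for arbitrary \(\cC^\fp\) strictly-nonlinear \(\fv,\gv,\hv\).
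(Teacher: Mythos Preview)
Your argument is correct. It is, in fact, the alternative route the paper itself mentions immediately after its proof: rewrite~\cref{eqs:stsesderat} as \(\dot\xv=A\xv+\big(\fv-fA\xv\big)/(1+f)\) and apply \cref{thm:stsenfp} as a black box to the modified nonlinearities. Your verification that \(\tilde\fv,\tilde\gv,\tilde\hv\in\cC^\fp(d')\) and are~\Orc{2} is the only thing needed, and you handle it correctly, including the shrinking of the domain to keep the denominators bounded away from zero.

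The paper's own proof takes a different tack: instead of eliminating the denominators, it multiplies through to obtain the implicit form \(\dot\xv=A\xv+\fv-\dot\xv\,f\) (and similarly for \(\yv,\zv\)) and asserts that the inductive residual computations of \cref{sec:sttsd,sec:sttfd,sec:sttud} go through unchanged when residuals are computed from this form. This works because at each stage of the induction \(\dot\xv\) is already known to order~\(p-1\), so \(\dot\xv\,f=\Orc{1}\cdot\Orc{1}\) contributes only at order~\(\geq2\) and the homological equations are unaffected. The payoff of the paper's route is practical: one never has to expand \(1/(1+f)\) as a series, which is convenient when actually constructing the transform (e.g., in computer algebra). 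Your route is logically cleaner, since it reduces the corollary to a direct invocation of the theorem with no need to revisit the inductive machinery.
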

\begin{proof} 
The proof and arguments of \cref{sec:sttsd,sec:sttfd,sec:sttud,sec:sddnntatn} also apply to the system~\cref{eqs:stsesderat} when the residuals of the equations are obtained from the form
\begin{subequations}\label{eqs:stsesderas}%
\begin{align}
    \dot{\xv}&={A\xv+\fv(t,\xv,\yv,\zv)-\dot{\xv}f(t,\xv,\yv,\zv)},
    \label{eq:stsesdexras}\\
    \dot{\yv}&={B\yv+\gv(t,\xv,\yv,\zv)-\dot{\yv}g(t,\xv,\yv,\zv)},
    \label{eq:stsesdeyras}\\
    \dot{\zv}&={C\zv+\hv(t,\xv,\yv,\zv)-\dot{\zv}h(t,\xv,\yv,\zv)}.
    \label{eq:stsesdezras}
\end{align}
\end{subequations} 
\end{proof}
Alternatively, one could also point out that the rational function form~\eqref{eqs:stsesderat} may be written in the form~\cref{eqs:stsesde}: such as 
\(\dot\xv=A\xv+\big(\frac{\fv-fA\xv}{1+f}\big)\).
However, in application the form~\cref{eqs:stsesderas} provides a more practical route for construction \cite[e.g.,][]{Roberts09c}. 
\end{full}

\subsection{Alternative slow-fast subcenter separation}
\label{sec:asfss}

Many modelling scenarios require the separation of fast waves from interesting slow dynamics: for example, elasticity \cite[e.g.,][]{Muncaster83c, Cohen88}, quasi-geostrophy \cite[e.g.,][]{Leith80, Lorenz86, Lorenz87, Warn95}, anelastic approximation \cite[e.g.,][]{Durran89}, incompressible fluid flow \cite[Ch.~13, e.g.]{Roberts2014a}, and many physics scenarios \cite[Parts~II and~III, e.g.]{vanKampen85}.
As indicated in \cref{eg:qga}, the quasi-geostrophic slow manifold apparently does not exist, and yet the slow manifold is an essential component of the computer code that, every hour of every day everywhere around the earth, makes reliable weather forecasts \cite[e.g.]{Roulstone2013}!
Our backward approach provides new insight into this apparent paradox.

In the finite-D systems considered herein, the issue is the separation of fast oscillations from interesting slow dynamics.
\begin{full}
Indeed, the following lemma establishes that dissipative problems with a spectral gap separating slow-center modes and stable modes may be often linked to `mechanical' problems with a spectral gap separating slow and fast waves.

\begin{lemma}
In linear autonomous systems for~\(u(t)\),
\(\dot u=\cL u\) has invariant subspace \(u=\cP U\) such that \(\dot U=\cA U\) iff
\(\ddot u=\cL u\) has invariant subspace \(u=\cP U\) such that \(\ddot U=\cA U\).
\end{lemma}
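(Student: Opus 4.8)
The plan is to observe that \emph{both} statements in the claimed equivalence reduce to exactly the same algebraic intertwining relation between the three operators, namely $\cL\cP=\cP\cA$, so that the lemma is immediate once that relation is extracted on each side. I read ``$u=\cP U$ is an invariant subspace such that $\dot U=\cA U$'' in the natural way: $\cP$ is a (one-to-one) linear map whose range is the subspace, and substituting a solution of the reduced system into $u=\cP U$ produces a solution of the full system that stays in that range. With this reading the whole proof is two short computations done twice.

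First I would treat the first-order system. If $\cL\cP=\cP\cA$ and $U(t)$ solves $\dot U=\cA U$, then $u:=\cP U$ satisfies $\dot u=\cP\dot U=\cP\cA U=\cL\cP U=\cL u$, so $u$ solves $\dot u=\cL u$ and remains in $\operatorname{range}\cP$; this gives invariance with reduced dynamics $\cA$. Conversely, if $u=\cP U$ is invariant with $\dot U=\cA U$, then along any such solution $\cP\cA U=\cP\dot U=\dot u=\cL u=\cL\cP U$, and evaluating at $t=0$ with arbitrary initial datum $U(0)$ gives $\cP\cA=\cL\cP$. Hence for $\dot u=\cL u$ the stated property is equivalent to $\cL\cP=\cP\cA$.

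Next I would repeat the two one-line computations for the second-order system $\ddot u=\cL u$ with reduced dynamics $\ddot U=\cA U$. If $\cL\cP=\cP\cA$ and $\ddot U=\cA U$, then $u:=\cP U$ has $\ddot u=\cP\ddot U=\cP\cA U=\cL\cP U=\cL u$, so again $u$ solves the system and stays in $\operatorname{range}\cP$. Conversely, along an invariant solution $\cP\cA U=\cP\ddot U=\ddot u=\cL u=\cL\cP U$, and evaluating at $t=0$ (where the initial position $U(0)$ is an arbitrary admissible datum, independently of the initial velocity $\dot U(0)$) yields $\cP\cA=\cL\cP$. Thus the second-order property is \emph{also} equivalent to $\cL\cP=\cP\cA$, and combining the two equivalences proves the lemma, indeed with the \emph{same} $\cP$ and $\cA$ on both sides.

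The algebra is trivial; the only point that needs care — the ``main obstacle'' such as it is — is pinning down what ``invariant subspace $u=\cP U$'' means for $\ddot u=\cL u$, whose genuine phase space is the pair $(u,\dot u)$: the correct reading is that $\operatorname{range}\cP\times\operatorname{range}\cP$ is invariant for the first-order reduction $\dot u=v,\ \dot v=\cL u$, equivalently that $\cP$ carries solutions of $\ddot U=\cA U$ with arbitrary $U(0),\dot U(0)$ to solutions of $\ddot u=\cL u$; under that reading the argument above applies verbatim. A secondary remark worth including is that when $\cP$ has full column rank the operator $\cA$ is uniquely determined by $\cL$ and $\cP$ as the unique solution of $\cP\cA=\cL\cP$, so ``the'' reduced dynamics is well defined and is genuinely the same $\cA$ in both systems; without the rank hypothesis the equivalence still holds, being symmetric in its two sides, only $\cA$ need no longer be unique.
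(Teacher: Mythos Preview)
Your proof is correct and takes essentially the same approach as the paper: the paper's proof is the single line ``Straightforwardly follows from solutions of the generalised eigen-problem $\cL\cP=\cP\cA$,'' and your argument simply unpacks this, showing that both the first-order and second-order invariance statements are equivalent to the intertwining relation $\cL\cP=\cP\cA$. Your version is more carefully spelled out (including the remark on how to read invariance for the second-order system), but the key idea is identical.
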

\begin{proof}
Straightforwardly follows from solutions of the generalised eigen-problem \(\cL\cP=\cP\cA\).
\end{proof}

That is, in first order linear systems \(\dot u=\cL u\), every real negative eigenvalue~\(\lambda\) gives rise to the pure imaginary eigenvalues~\(\pm\sqrt\lambda\) in the corresponding `mechanical' problem \(\ddot u=\cL u\)\,.  
Thus, if there is a suitable spectral gap in the real eigenvalues of~\cL, then the corresponding `mechanical' problem has a spectral gap separating fast waves from slow waves.
\end{full}
In such scenarios the preceding derivations of \cref{sec:stsenfsmsde} usefully apply with some changes.  
Indeed, the use of a coordinate transform to understand such slow-fast separation of oscillations---albeit for autonomous systems---dates back at least sixty years to the work of \cite{Kruskal1962}.  
But the backward \cref{pro:sccte} proved in this section is unprecedented \text{in extant theory.}

Recall that, for a chosen threshold~\muc,  \cref{def:ocf} terms quantities ``fast'' when the absolute value \(|\lambda|>\muc\) and ``slow'' when \(|\lambda|\leq\muc\)\,.  
Then the algebraic derivations of \cref{sec:sttsd,sec:sttfd} applies with \(|\Re\mu|\) replaced by~\(|\mu|\),  ``hyperbolic'' replaced by ``fast'', and ``center'' replaced by ``slow''.

However, there are two significant changes.
There is a qualitative change to the variables and dynamics.
In the corresponding normal form~\cref{eqs:stsesdenf} there are no~\Zv\ variables as here the fast~\Yv\ encompasses all hyperbolic variables.
Correspondingly the evolution on the slow manifold has the generic form \(\dot\Xv=A\Xv+\Fv_c(t,\Xv)+\cF(t,\Xv,\Yv)\Yv\Yv\). 
This form reflects that there typically are quadratic interactions between fast modes\slash waves, interactions that affect the evolution of the slow variables.
One example of such quadratic interactions is the Stokes drift mean flow generated by water waves \cite[p.425, e.g.]{Mei89}.
The important result is the following: \emph{in the presence of fast waves\slash oscillations the evolution off such a slow manifold is typically fundamentally different to that on the slow manifold.}

The convolutions~\eqref{eq:stsezmuf} must be adapted to cater for `large' frequency oscillations: \(\mu\)~satisfying small~\(|\Re\mu|\) and \(|\mu|>\muc\).
Recall that the convolutions are only needed to solve \ode{}s of the form~\eqref{eq:stseddtconv}, so in this scenario define the symbol~\(\Z\mu a\) to denote, if one exists, a bounded solution of~\eqref{eq:stseddtconv}---a bound uniform over the time interval~\(\TT_\mu\).
Then to solve the analogue of~\eqref{eq:stsemux}, if \(\Z\mu a_\kappa\)~exists, then set \(\fx=\Z\mu a_\kappa\) and \(\ff=0\)\,, otherwise set \(\fx=0\) and \(\ff=a_\kappa\).
Similarly, to solve the analogue of~\eqref{eq:stsemuy}, if \(\Z\mu b_\kappa\)~exists, then set \(\fy=\Z\mu b_\kappa\) and \(\fg=0\)\,, otherwise set \(\fy=0\) and \(\fg=b_\kappa\).
For some right-hand sides, judicious integration by parts may be used to maintain some subjectively desirable properties\slash symmetries in the conjugate system.

Let's briefly discuss what terms may be required to be in the evolution of the slow variables~\Xv\ via the variable~\ff.
Analogous to \cref{sec:sttsd}, unbounded solutions~\fx\ of the \ode{}s arise when~\(a_\kappa\) is slowly varying (as fast oscillations effectively cancel, and/or fast growth\slash decay may be confined to negligible transients). 
Hence if~\(a_\kappa\) arises from terms either purely in~\Xv, or `resonance' terms involving two or more \Yv-variables, then we must generally avoid unboundedness by setting~\ff\ non-zero (similar considerations usefully transform stochastic Hopf bifurcations \cite[\S5, e.g.]{Roberts06k}).
Hence we chose~\ff\ to maintain the form \(\dot\Xv=A\Xv+\Fv_c(t,\Xv)+\cF(t,\Xv,\Yv)\Yv\Yv\).

The above considerations then establish the following proposition on the separation of slow and fast variables (analogous to \cref{thm:stsenfp}).

\begin{proposition}[fast-slow separation] \label{pro:sccte}
Consider the generic system~\cref{eqs:stsesde} in the case where there are no \zv-variables, the \xv-variables are slow,  \(|\alpha_i|\leq\alpha<\muc\), the \yv-variables are fast, \(|\beta_j|\geq\beta>\muc\) (\cref{def:ocf}), and the explicit non-autonomous effects are `slow' in time.
For every given order~\(p\), \(2\leq p\leq\fp\)\,, there exists a near identity, polynomial, time dependent, coordinate transformation
\begin{equation}
    \xv=\xv(t,\Xv,\Yv),\quad
    \yv=\yv(t,\Xv,\Yv),
    \label{eq:scctexform}
\end{equation}
and a corresponding polynomial normal form system
\begin{subequations}\label{eqs:sccte}%
\begin{align}
    \dot{\Xv}&=A\Xv+\Fv (t,\Xv,\Yv)
    \nonumber\\
    &=A\Xv+\Fv_c(t,\Xv)+\cF(t,\Xv,\Yv)\Yv\Yv,
     \label{eq:scctexx}\\
    \dot{\Yv}&=\big[ B+G(t,\Xv,\Yv) \big]\Yv,
    \label{eq:sccteyy}
\end{align}
\end{subequations}
which together is approximately conjugate to the non-autonomous system~\cref{eqs:stsesde},
where~\Fv\ and~\(G\Yv\) are~\Orc{2}  (and where \cF~is a rank three tensor),
and where, by construction, the difference between system~\cref{eq:scctexform}+\cref{eqs:sccte} and system~\cref{eqs:stsesde} is~\Orc{p}.
\end{proposition}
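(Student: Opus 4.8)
The plan is to re-run the inductive construction that proves \cref{thm:stsenfp}, changing only three things, exactly as flagged in \cref{sec:asfss}: the classification cut-off of \cref{def:ocf} is now read with $|\mu|$ in place of $|\Re\mu|$, so ``slow'' replaces ``center'' and ``fast'' replaces ``stable''; the symbol $\Z\mu a$ is redefined to mean \emph{any} bounded solution of the linear \ode~\cref{eq:stseddtconv} uniformly bounded over $\TT_\mu$, when such a solution exists, rather than the one‑sided convolution~\cref{eq:stsezmuf}; and, because there are no $\zv$‑variables, only the slow step (\cref{sec:sttsd}) and the fast step (\cref{sec:sttfd}) are needed, with no analogue of \cref{sec:sttud}. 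First I would set up the induction as before: linearly change basis so that $A$ and $B$ are upper triangular; check the base case $p=2$ for the identity transform with $\dot\Xv=A\Xv$, $\dot\Yv=B\Yv$; and, assuming a transform~\cref{eq:scctexform} and normal form~\cref{eqs:sccte} valid to residuals~\Orc{p} for some $2\le p<\fp$, seek \Orc{p} corrections $\hat\xv,\hat\yv,\hat\Fv,\hat G$. The chain‑rule expansion of the time derivatives, the multivariate Taylor expansion of $\fv$ and $\gv$, and the off‑diagonal‑neglect argument transcribe verbatim from \cref{sec:sttsd,sec:sttfd}, so the homological equations reduce term‑by‑term, for a $p$th‑order multinomial $\av(t)\Xv^\pv\Yv^\qv$ (respectively $\bv(t)\Xv^\pv\Yv^\qv$), to the scalar \ode{}s $\ff+\dot\fx-\mu\fx=a_\kappa$ with $\mu=\alpha_\kappa-\sum_i p_i\alpha_i-\sum_j q_j\beta_j$, and $\fg+\dot\fy-\mu\fy=b_\kappa$ with $\mu=\beta_\kappa-\sum_i p_i\alpha_i-\sum_j q_j\beta_j$, precisely as in~\cref{eq:stsemux,eq:stsemuy}.

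The choices then follow the dichotomy of \cref{sec:asfss}: when $|\mu|>\muc$ (fast $\mu$) and a bounded $\Z\mu a_\kappa$ exists, set $\fx=\Z\mu a_\kappa$ and $\ff=0$ (respectively $\fy=\Z\mu b_\kappa$, $\fg=0$); when $|\mu|\le\muc$ (slow, resonant $\mu$), or when no bounded solution of~\cref{eq:stseddtconv} exists, set $\fx=0$ and $\ff=a_\kappa$ (respectively $\fy=0$, $\fg=b_\kappa$), with judicious integration by parts where it usefully reshapes the retained terms. The structural claims then need verifying. For the $\Xv$‑evolution, a term contributing to $\ff$ must have slow $\mu$; but a term with exactly one $\Yv$‑factor at iteration order $p\le\fp-1$ has $|\pv|=p-1$ and so $|\mu|\ge|\beta_j|-(|\pv|+1)\alpha\ge\beta-(\fp-1)\alpha>\muc$ by the placement of $\muc$ in the gap (\cref{ass:givensys}, \cref{def:ocf}); hence one‑$\Yv$‑factor terms are always absorbed into the transform, and the surviving contributions to $\ff$ carry either no $\Yv$‑factor (forming $\Fv_c(t,\Xv)$) or at least two (forming $\cF(t,\Xv,\Yv)\Yv\Yv$), which is the asserted form~\cref{eq:scctexx}. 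For the $\Yv$‑evolution, if $\qv=\ov$ then $|\mu|=|\beta_\kappa-\sum_i p_i\alpha_i|\ge\beta-(\fp-1)\alpha>\muc$, so a slow $\mu$ forces at least one $Y_j$‑factor and $\hat G\Yv$ keeps the form $G\Yv$, giving~\cref{eq:sccteyy}. Having arranged residuals~\Orc{p+1}, Proposition~3.6 of \cite{Potzsche2006} upgrades this to an~\Orc{p+1} difference between the two systems, and reverting the triangularising change of basis yields \cref{pro:sccte} for general $A,B$; the non‑anticipation observations of \cref{sec:sddnntatn} carry over in the form ``$\ff$ is chosen to keep $\dot\Xv$ free of the unbounded oscillatory\slash growing convolutions that would otherwise be introduced.''

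The main obstacle is the fast case with $|\mu|>\muc$ but $\Re\mu$ small or zero --- the ``large‑frequency'' oscillations absent from \cref{thm:stsenfp} --- where $\Z\mu a_\kappa$ is no longer the absolutely convergent one‑sided convolution and need not exist as a bounded function. This is exactly where the hypothesis that the explicit time dependence is ``slow'' does the work: for a slowly varying forcing $a(t)$ a bounded particular solution of $\dot v-\mu v=\pm a$ does exist --- obtainable, for instance, by repeated integration by parts, giving $v\sim a/\mu$ at leading order with geometrically decaying corrections --- so every fast‑$\mu$ term \emph{generated by the iteration} is resolvable into the coordinate transform, while only genuinely resonant slow‑$\mu$ terms (which, by the inequalities above, carry no $\Yv$‑factor or at least two) are left in the evolution. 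The delicate part is to pin down a class of ``slow'' time dependence that guarantees these bounded solutions and is closed under the multinomial operations of the iteration --- in particular under forming the quadratic‑in‑time coefficients of the type $\tilde a_1(t)\,\Z\nu{\tilde a_2}(t)$ inherited from \cref{sec:sttsd}, for which one again needs $\tilde a_1$ and $\Z\nu{\tilde a_2}$ to remain ``slow'' enough. Everything else is a routine transcription of \cref{sec:sttsd,sec:sttfd,sec:sddnntatn} with the symbol replacements above.
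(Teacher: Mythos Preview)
Your proposal is correct and follows essentially the same route as the paper: the paper's ``proof'' is precisely the discussion in \cref{sec:asfss} preceding the proposition, which re-runs the induction of \cref{thm:stsenfp} with $|\mu|$ replacing $|\Re\mu|$, redefines $\Z\mu a$ as any bounded solution of~\cref{eq:stseddtconv}, drops the $\zv$-variables, and argues (heuristically) that the terms forced into~$\ff$ are exactly those with no $\Yv$-factor or at least two. Your explicit inequality $|\mu|\ge\beta-(\fp-1)\alpha>\muc$ for single-$\Yv$-factor terms, and your identification of the ``slow explicit time dependence'' hypothesis as the mechanism ensuring bounded $\Z\mu a_\kappa$ exist for fast~$\mu$ with small~$\Re\mu$, are slightly more explicit than the paper's treatment but are the same argument.
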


Systems in the normal form~\eqref{eqs:sccte} possess the slow manifold \(\Yv=\ov\) (\cref{def:nfnfim}) on which the slow evolution is \(\dot{\Xv}=A\Xv+\Fv_c(t,\Xv)\).
Consequently, an immediate corollary of \cref{pro:sccte} is that there are systems \(\Orc{p}\)~close to~\cref{eqs:stsesde} which possess a slow manifold, in some domain about~\ov, and parametrised by \(\xv=\xv(t,\Xv,\ov)\) and \(\yv=\yv(t,\Xv,\ov)\).

\begin{full}
\paragraph{Nonlinear normal modes} Without elaborating details, I conjecture that there should be cognate propositions about the existence of coordinate transforms establishing that for every given suitable system there exists a nearby system which possesses an exact nonlinear normal mode \cite[cf.][e.g.]{Shaw94a, Haller2016}.
\end{full}

In contrast, forward theory of invariant manifolds says almost nothing about the existence and relevance of such slow manifolds, \begin{full}
nor about nonlinear normal modes, \end{full}even in the simpler case of non-autonomous scenarios.
For example, the subcenter Theorem~7.1 by \cite{Sijbrand85} does not apply to the crucial notion of quasi-geostrophy in geophysical fluid dynamics \cite[e.g.,][Ch.~7]{Gill82}.
Of course, part of the reason for this lack of general theory is the extreme sensitivity of topological structures when there are slow dynamics among fast oscillations.  
But firstly, preserving known symmetries reduces the scope for such sensitivity.
And secondly, when one is mainly interested in finite time, such as a weather forecast for the next week, then such topological sensitivity may well be of negligible relevance, or mey be catered for by ensemble forecasts \cite[e.g.,][]{Palmer05, Roulstone2013}.
The following \cref{eg:qga} illustrates how the backward approach illuminates the slow manifold of quasi-geostrophy---an enormously important practical concept in geophysical dynamics \cite[e.g.,][]{Leith80}.

\begin{example} \label{eg:qga}
In order to explore and understand the slow manifold of quasi-geostrophy, \cite{Lorenz86} introduced the toy 5-D autonomous system, often called the \emph{Lorenz86 system},
\begin{align}&
	\dot u  =  -vw+bvz\,,  \quad
	\dot v  =  \phantom{-}uw-buz\,,  \quad
	\dot w  =  -uv\,,  \nonumber\\&
	\dot x  =  -z\,, \quad
	\dot z  =  \phantom{-}x+buv\,. \label{eqs:qga}
\end{align}
In the algebraic form of system~\eqref{eqs:qga} there is a clear cut distinction between the slow modes~$u$, $v$, and~$w$ (our~\xv), and the fast modes~$x$ and~$z$ (our~\yv), a distinction which \cite{Lorenz86} and others use to construct a `slow manifold'.
But \cite{Lorenz87} then proved that there is no slow manifold for the system~\eqref{eqs:qga}!  
This almost paradoxical result, attributable to a lack of persistent topological structures, led to much discussion \cite[e.g.,][]{Jacobs91, Lorenz92, Camassa95, Fowler96, Vanneste2008, Ginoux2013}.
Nonetheless, geophysics, science, and engineering need to use such slow manifolds.

In our backward approach, \cref{thm:stsenfp} immediately applies with the above mentioned modifications.
Indeed, \cite{Cox92} constructed such a normal form---the algebraic construction described by \cite{Cox93a} corresponds to those detailed in \cref{sec:stsenfsmsde}, although the algebraic machinations are considerably simpler because the Lorenz86 system~\eqref{eqs:qga} is autonomous.
To third order of asymptotic approximation, differences~\Ord{b^3} as \(b\to0\), the near identity coordinate transformation
\begin{subequations}\label{eqs:nfrl1}%
\begin{align}&
	u = U -bVX +\tfrac14b^2U(Z^2-X^2),  \quad
	v =  V +bUX +\tfrac14b^2V(Z^2-X^2) ,  \nonumber\\&
	w =  W +b^2Z(V^2-U^2),  \quad
	x =  X -bUV +\tfrac14b^2X(V^2-U^2),  \nonumber\\&
	z =  Z -bW(V^2-U^2)-\tfrac14b^2Z(V^2-U^2), \label{eq:nfrl1x}
\end{align}
and evolution equations
\begin{align}&
	\dot U = - VW -b^2VW(V^2-U^2),  \quad
	\dot V = UW +b^2UW(V^2-U^2),  \nonumber\\&
	\dot W = -UV +{\color{red}\tfrac12b^2UV(X^2+Z^2)},  \quad
	\dot X = -Z +\tfrac12b^2Z(V^2-U^2),  \nonumber\\&
	\dot Z =  X -\tfrac12b^2X(V^2-U^2)\,,  \label{eq:nfrl1e}
\end{align}
\end{subequations}
together, are approximately the Lorenz86 system~\cref{eqs:qga} \cite[]{Roberts2012a}.
\begin{itemize}
\item \cref{def:nfnfim} then implies that \(X=Z=0\) is a slow manifold for~\cref{eq:nfrl1e}, and consequently \(u=U\), \(v=V\), \(w=W\), \(x=-bUV\) and \(Z=-bW(V^2-U^2)\) is an exact 3-D slow manifold for \cref{eqs:nfrl1} parametrised by~\((U,V,W)\).
Although a slow manifold does not exist for the original~\cref{eqs:qga} \cite[]{Lorenz87}, evidently there are systems exponentially close%
\footnote{The exponential closeness (\cref{def:ocf}) follows since we may continue the construction of systems like~\cref{eqs:nfrl1} to arbitrarily high order (\cref{cor:nfesc}).} 
to~\cref{eqs:qga} which do possess a slow manifold (corresponding \text{to \cref{thm:nfimex}).}

\item An extension of \cref{lem:nfftd} would establish a lower bound on the size of the domain of existence for the slow manifold of~\cref{eq:nfrl1e}, and thence for~\cref{eqs:nfrl1} (\cref{def:nfdd}).

\item On the slow manifold of~\cref{eqs:nfrl1}, the evolution is given by
\cref{eq:nfrl1x} with \(X=Z=0\) (corresponding to \cref{cor:nfeom}), namely \(\dot U = - VW -b^2VW(V^2-U^2)\), \(\dot V = UW +b^2UW(V^2-U^2)\), and \(\dot W = -UV\)\,.

\item Further, this backward approach illuminates why there can be no general analogue of the emergence \cref{lem:nfemerge} in slow manifold scenarios like the Lorenz86 system~\cref{eqs:qga}, and hence there is here no counterpart of \cref{pro:stbackem}.  
Although the approximate system~\cref{eqs:nfrl1} has a slow manifold, the slow manifold is not attractive.  
Instead, the conjugate system~\cref{eq:nfrl1e} indicates that nearby solutions, nonzero~\((X,Z)\), oscillate indefinitely.  
But further, such oscillations generally force a drift in the slow variables: in~\cref{eq:nfrl1e} the red-coloured quartic term in~\(\dot W\) generates a drift proportional to the square of the fast-wave amplitude (\(X^2+Z^2\)), a drift that is not present on the slow manifold. 
Consequently, the evolution on such a slow manifold cannot be expected to exactly capture all the nearby evolution---even in an average sense.

\end{itemize}
It seems apparent that the above discussion for the example Lorenz86 system is quite generic in separating slow-fast subcenter dynamics, and clarifies important issues in geophysics, science, and engineering dynamics.

\end{example}

\section{Existence and emergence proved in a domain}
\label{sec:nfeep}

\begin{full}
\begin{quoted}{Richard E. Meyer, 1992}
All differential equations are imperfect models 
\end{quoted}
\end{full}

Our theory is based upon the dynamics near an equilibrium (at the origin without loss of generality).  
In that sense the approach is local.  
Nonetheless, by continuity in various bounds, the locale of theoretical support is finite in size.  
This section establishes the first general lower bound on the finite domain of validity.
It is well known that in some scenarios extant forward establishes a global domain of validity, but it can only do so for scenarios with strong restrictions on the nonlinearity and the time-dependence, scenarios rarely of use in applications.
The lower bound here quantifies that the center manifold framework is not ``just asymptotic'', the theory supports use at the finite parameter values that are essential in most applications, and for nonlinearities that are usual in applications.

\begin{full}
A key part of our dynamical systems approach are the existence and properties of invariant\slash integral manifolds of a given non-autonomous \ode\ system.
Future research plans to cater for general stochastic systems where  very rare events, of mostly negligible practical interest, could send trajectories outside a domain of validity.
Such very rare events severely limit the usefulness of established forward theory.
\end{full}
In any specified system~\cref{eq:phys}, with linear operator~\cL, most  previous definitions of invariant\slash integral manifolds require \(e^{\cL t}\) to be analysable in both forward and backward time, that is, the operator~\cL\ must be bounded. 
But in extensions of our approach \cite[]{Hochs2019} to \pde{}s the operator~\cL\ is typically unbounded.
Consequently we change some basic definitions to cope: we replace current extant definitions of invariant\slash integral manifolds with the following \cref{def:nfnfim}.

In \cref{def:nfnfim} the subspaces~\(\MM_i\) are invariants of~\cref{eqs:stsesdenf} across the whole \(\Uv=(\Xv,\Yv,\Zv)\)-space. 
But their characterisation with regard to the dynamics about the equilibrium is only certain within a finite domain, denoted~\(\DD_\mu\).
So we restrict the manifolds to that domain (and then avoid calling them subspaces).
Moreover, because the domain~\(\DD_\mu\) may only exist for a finite time interval~\(\TT_\mu\) the established definitions of invariant manifolds \text{do not apply here.}

\begin{definition}[invariant\slash integral manifolds] \label{def:nfnfim}
First, for every system in the normal form~\cref{eqs:stsesdenf}, and for time intervals~\(\TT_\mu\) and domains~\(\DD_\mu\) containing the origin and characterised by \cref{def:nftpd}, define the following \emph{invariant\slash integral manifold}s associated with the equilibrium at the origin:%
\footnote{These definitions are for ``\emph{a \ldots\ manifold}'': the implied non-uniqueness here is due to potentially differing~\(\mu\), \(\TT_\mu\) and~\(\DD_\mu\).
The potential for exponentially small non-uniqueness allowed by previous (forward theory) definitions of invariant\slash integral manifolds is excluded by \cref{def:nfnfim} for every system in the  form~\cref{eq:stsexform}+\cref{eqs:stsesdenf}.}
\begin{itemize}
\item a \emph{center manifold} \(\MM_{c}\) is 
\((\Xv,\ov ,\ov )\in\DD_\mu\);
\item a \emph{stable manifold} \(\MM_{s}\) is 
\((\ov ,\Yv,\ov )\in\DD_\mu\);
\item an \emph{unstable manifold} \(\MM_{u}\) is 
\((\ov ,\ov ,\Zv)\in\DD_\mu\);
\item a \emph{center-stable manifold} \(\MM_{cs}\) is 
\((\Xv,\Yv,\ov )\in\DD_\mu\);
\item a \emph{center-unstable manifold} \(\MM_{cu}\) is 
\((\Xv,\ov ,\Zv)\in\DD_\mu\).
\end{itemize}
Further, in the extension of \cref{sec:asfss} to slow-fast variables~\((\Xv,\Yv)\), define a \emph{slow manifold}~\(\MM_{0}\) to be \((\Xv ,\ov )\in\DD_\mu\).
Lastly, denoting more concisely the \(\cC^\fp\)-diffeomorphism~\cref{eq:stsexform} from some \(D_\mu\subseteq\DD_\mu\) onto~\(d_\mu\) as \(\uv=\uv(t,\Uv)\), for every system~\cref{eq:stsexform}+\cref{eqs:stsesdenf} and for every \(i\in\{c,s,u,cs,cu,0\}\),  define corresponding \emph{invariant\slash integral manifolds}~\(\cM_i(t)=\{\uv(t,\MM_i\cap D_\mu): t\in\TT_\mu\}\).
\end{definition}

In contrast, established forward theory for non-autonomous systems defines invariant manifolds in terms of integrals over infinite time.%
\footnote{Some example preconditions in extant theory are the following.
\cite{Henry81} in Defn.~6.1.1 requires time over all~\RR.
\cite{Potzsche2006} [p.431] invoke ``\(\mathbb I\) denotes a real [time] interval unbounded above'' with [p.438] ``under the additional assumption that \(\mathbb I=\RR\)'' that unstable manifolds exist.  
\cite{Bento2017b} in their non-uniform analysis similarly require ``for all \(t\in \RR\)'' in their Theorem~3.1.
\cite{Barreira2007} 
[p.172] require the nonlinearity function in the system to decay exponentially quickly in time, ``for every \(t\in\RR\)''.
And for their non-autonomous invariant manifolds, \cite{Aulbach2006} require ``for all \(t,\tau\in\RR\)'' in their definition of an invariant set~[p.3], and similar elsewhere.
Such preconditions are often unavailable in applications.
}
 
Consequently forward theory requires properties of the system to hold for all time, and the trajectories of the system to be suitable for all time.  
Such \emph{all time} requirements are too onerous for many applications (e.g., rare events may `kick' a system to a completely different domain in state space). 
The above new definitions do not invoke infinite time intervals.

The well known non-uniqueness of invariant manifolds appears here in the non-uniqueness of the coordinate diffeomorphism~\cref{eq:stsexform}.
For a classic example, the 2D system \(\dot x=-x^3\) and \(\dot y=-2y\) is well known to have center manifolds \(y=c\exp(-1/x^2)\) for every~\(c\). 
But the coordinate transform \(X=x\) and \(Y=y-c\exp(-1/x^2)\) leads to \(\dot X=-X^3\) and \(\dot Y=-2Y\) (symbolically the same system) which \cref{def:nfnfim} asserts has in~\((X,Y)\) only the center manifold \(Y=0\)\,.
That is, for each diffeomorphism~\cref{eq:stsexform} a unique invariant manifold is defined. 
That the constant~\(c\) is arbitrary in this example reflects that non-uniqueness arises because there are many suitable diffeomorphisms~\cref{eq:stsexform}.

Sometimes the domain~\(D_\mu\) of theoretical support is `infinite' in size.
\begin{full}
Such as in the example of \cref{eg:nfect}.
\end{full}
One important class of global support is when there is a whole subspace\slash manifold of equilibria (e.g., see \cref{sec:nfssppei}), each of which satisfies the criteria for the theory.
In such scenarios, such a collection of such local invariant manifolds forms a so-called \emph{global} invariant manifold \cite[e.g.,][]{Carr81} because it exists in a domain containing all the equilibria, albeit maybe only local in directions transversal to the manifold of equilibria.

Variations to the Hartman--Grobman Theorem for {Caratheodory}-type differential equations \cite[e.g.,][]{Aulbach2000} ensure this \cref{def:nfnfim} is consistent, where both apply, with extant definitions \cite[e.g.,][]{Henry81, Barreira2007, Haragus2011}.
\cite{Henry81} refers to finite-time invariant manifolds as ``local invariant manifolds'' [Defn.~6.1.1], while others reserve this term to mean local in state space \cite[e.g.,][]{Aulbach2000}, whereas \cref{def:nfnfim} proposes the practical view that finite-time intervals and finite state-space domains are the norm in applications.
Potentially there are cases for which we know invariant manifolds exist under extant definitions, but not knowably under this \cref{def:nfnfim}.
This new backward theory complements extant forward theory for the many applications where previous definitions do not apply (e.g., \cref{eg:qga}) but for which this \cref{def:nfnfim} knowably characterises useful invariant manifolds.

\subsection{Time scales remain separated in a domain}

For straightforward use of a multivariate Lagrange's remainder theorem, this section restricts domains to be `star-shaped' relative to the base equilibrium.  
That is, each point in a domain must be connected to the equilibrium (assumed to be at the origin) by a straight line segment that stays within the domain.
To relax this star-shaped constraint, one would adapt the use of the multivariate Lagrange remainder theorem. 

\begin{definition}[star-shaped] \label{def:nfss}
A non-empty open subset \(S\subseteq\RR^N\) is \emph{star-shaped} if for every \(\uv \in S\), \(\gamma\uv \in S\) for all \(0\leq\gamma\leq1\).
\end{definition}

\begin{full}
In \cref{fig:ctECT} for the example of \cref{eg:nfect}, the left-hand \(xy\)-domain (shaded) is star-shaped, but the right-hand \(XY\)-domain is not (and would have to be restricted a little for the results developed here).
\end{full}

The major results of this section are that key invariant manifolds exist and emerge from the dynamics over a domain whose size we bound from below.
Crucial modelling properties often hold over domains usefully larger than~\(\DD_\mu\) that may be identified in any given scenario.
The size of the domain~\(\DD_\mu\) is primarily bounded by the typical increase from zero, of~\(\|G(t,\Uv)\|\) and~\(\|H(t,\Uv)\|\) (in~\cref{eqs:stsesdenf}) as \(|\Uv|\)~increases, where \(\Uv=(\Xv,\Yv,\Zv)\in\RR^{m+n+\ell}\).
As usual, the norm~\(\|\cdot\|\)~denotes the \(2\)-norm of the matrix, viz \(\|G\|:=\max_{|\vec v|=1}|G\vec v|\), and also the condition number \(\cond U:=\|U^{-1}\|\cdot\|U\|\).

\begin{definition}[emergence preserving domain] \label{def:nftpd}
Consider any given polynomial normal form system~\cref{eqs:stsesdenf}. 
Define matrices~\(P\), \(Q\) and~\(R\) to be such that~\(P^{-1}AP\), \(Q^{-1}BQ\) and~\(R^{-1}CR\) are in (real) Jordan form, and define~\(\delta\geq0\) to be the maximum magnitude of the super-diagonal elements in these Jordan form matrices.%
\footnote{We seek matrices~\(P\), \(Q\) and~\(R\) that reduce matrices~\(A\), \(B\) and~\(C\) to block-diagonal form where here the term `diagonal' includes \(2\times 2\)~blocks of the real form \(\left[\begin{smallmatrix}  \Re& \Im\\ -\Im& \Re \end{smallmatrix}\right]\) for each pair of complex conjugate eigenvalues~\(\Re\pm i\Im\). 
So here ``super-diagonal'' does not include these \(2\times 2\)~blocks.
Further, the Jordan blocks for degenerate eigenvalues have super-diagonal elements of at most~\(\delta\) which can be as small as desired at the trade-off of increasing (worsening) the condition number of~\(P\), \(Q\) and~\(R\).}
Then define a star-shaped domain \(\DD_\mu\subseteq \RR^{m+n+\ell}\) and time interval~\(\TT_\mu\), such that the parameter~\(\mu\) is  within the spectral gap  \(\alpha<\mu\pm\delta<\beta-\max\big\{\cond(Q)\|{G}(t,\Uv)\|\,,\linebreak[0]\,\cond(R)\|{H}(t,\Uv)\|\big\}\) for all \(\Uv\in\DD_\mu\) and for all times \(t\in \TT_\mu\)\,.%
\end{definition}

The domain \(\DD_\mu\) is non-trivial for the following reasons.
Firstly, \(G\Yv,H\Zv\) are~\Orc2\ so the origin is in~\(\DD_\mu\) for all time.
Secondly, continuity of the polynomial~\(G\Yv\) and~\(H\Zv\) assures us that \(\DD_\mu\)~is a finite neighbourhood about the origin (the neighbourhood may be non-uniform in time, so we restrict consideration to some time interval~\(\TT_\mu\) that contains the initial time~\(t_0\)). 
The above \cref{def:nftpd}, for the domain~\(\DD_\mu\), only addresses the class of normal form systems~\cref{eqs:stsesdenf}:  the algebraic machinations of \cref{sec:stsenfsmsde} derive the system~\cref{eqs:stsesdenf} as part of an approximation, but its role as an approximation is not relevant in this definition.

\begin{lemma}[exponential trichotomy]\label{lem:exptri}
For times \(s,t\in\TT_\mu\) and for as long as solutions of~\cref{eqs:stsesdenf} stay in~\(\DD_\mu\):  
\begin{itemize}
\item \(|\Yv(t)|\leq \cond Q\,|\Yv(s)|e^{-\mu (t-s)}\) for \(t\geq s\)\,; 
\item \(|\Zv(t)|\leq \cond R\,|\Zv(s)|e^{-\mu (s-t)}\) for \(t\leq s\)\,; 
\item \(|\Xv(t)|\leq \cond P\,|\Xv(s)|e^{\mu|t-s|}\) provided \(\Yv(s)=\ov\) or \(\Zv(s)=\ov\).
\end{itemize}
\end{lemma}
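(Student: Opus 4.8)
\textbf{Proof proposal for \cref{lem:exptri}.}

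The plan is to treat each of the three estimates by working in the linearly‑diagonalising coordinates provided by the matrices~$P$, $Q$ and~$R$ of \cref{def:nftpd}, where the super‑diagonal coupling is at most~$\delta$, and then to close a differential inequality for the Euclidean norm. First I would change variables $\Yv=Q\tilde\Yv$, $\Zv=R\tilde\Zv$, $\Xv=P\tilde\Xv$; since these are linear and time‑independent, the domain~$\DD_\mu$ and the bounds of \cref{def:nftpd} transfer at the cost of condition numbers $\cond Q$, $\cond R$, $\cond P$. In these coordinates, as long as the solution stays in~$\DD_\mu$, equation~\cref{eq:stsesdenfyy} becomes $\dot{\tilde\Yv}=\big[Q^{-1}BQ+Q^{-1}G(t,\Uv)Q\big]\tilde\Yv$, where $Q^{-1}BQ$ is in real Jordan form with super‑diagonal entries bounded by~$\delta$ and eigenvalue real parts at most~$-\beta$.

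For the $\Yv$‑estimate I would compute $\frac{d}{dt}|\tilde\Yv|^2 = 2\,\tilde\Yv^{\sf T}\operatorname{sym}\!\big(Q^{-1}BQ+Q^{-1}GQ\big)\tilde\Yv$ and bound the symmetric part: the Jordan block diagonal contributes at most $-\beta$, the $\delta$ super‑diagonal contributes at most $+\delta$ to the numerical abscissa, and the perturbation contributes at most $\|Q^{-1}GQ\|\le\cond(Q)\|G(t,\Uv)\|$. Hence $\frac{d}{dt}|\tilde\Yv|\le\big(-\beta+\delta+\cond(Q)\|G\|\big)|\tilde\Yv|\le-\mu\,|\tilde\Yv|$ by the spectral‑gap inequality in \cref{def:nftpd} (which gives $\mu+\delta<\beta-\cond(Q)\|G\|$, i.e. $-\beta+\delta+\cond(Q)\|G\|<-\mu$), valid for all $t\in\TT_\mu$ with $\Uv(t)\in\DD_\mu$. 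Grönwall then yields $|\tilde\Yv(t)|\le|\tilde\Yv(s)|e^{-\mu(t-s)}$ for $t\ge s$, and reverting the change of basis gives $|\Yv(t)|\le\cond(Q)|\Yv(s)|e^{-\mu(t-s)}$. The $\Zv$‑estimate is identical with time reversed: running~\cref{eq:stsesdenfzz} backward, $Q^{-1}BQ$ replaced by $R^{-1}CR$ whose eigenvalue real parts are at least~$+\beta$, so $\frac{d}{dt}|\tilde\Zv|\ge\big(\beta-\delta-\cond(R)\|H\|\big)|\tilde\Zv|\ge\mu|\tilde\Zv|$, which integrated from $t$ up to $s\ge t$ gives $|\Zv(t)|\le\cond(R)|\Zv(s)|e^{-\mu(s-t)}$.

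For the $\Xv$‑estimate the point is that the hypothesis $\Yv(s)=\ov$ or $\Zv(s)=\ov$ forces the solution to lie in an invariant set on which the $\Xv$‑equation closes: if $\Yv(s)=\ov$ then~\cref{eq:stsesdenfyy} gives $\Yv(t)\equiv\ov$ (linear homogeneous in~$\Yv$), so~\cref{eq:stsesdenfxxx} reduces to $\dot\Xv=A\Xv+\Fv_c(t,\Xv)$ with no $\Yv\Zv$ term; similarly if $\Zv(s)=\ov$. On this reduced flow I would bound $\big|\frac{d}{dt}|\tilde\Xv|\big|\le\big(\alpha+\delta+\|P^{-1}\Fv_c\text{-part}\|\big)|\tilde\Xv|$; more carefully, the Jordan diagonal of $P^{-1}AP$ has real parts of modulus at most~$\alpha$, the super‑diagonal adds at most~$\delta$, and since $\Fv_c$ is strictly nonlinear and $\Uv\in\DD_\mu$ its linearised contribution is absorbed so that the numerical abscissa of the coefficient is at most $\alpha+\delta<\mu$ by \cref{def:nftpd} (which requires $\alpha<\mu-\delta$). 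This gives the two‑sided bound $\big|\frac{d}{dt}|\tilde\Xv|\big|\le\mu|\tilde\Xv|$, hence $|\tilde\Xv(t)|\le|\tilde\Xv(s)|e^{\mu|t-s|}$ and thus $|\Xv(t)|\le\cond(P)|\Xv(s)|e^{\mu|t-s|}$.

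I expect the main obstacle to be the bookkeeping around the $\Fv_c$ term in the $\Xv$‑estimate: one must argue carefully that, restricted to $\DD_\mu$, the contribution of $\Fv_c$ to the one‑sided logarithmic derivative of $|\Xv|$ does not push the growth rate past~$\mu$, which is where \cref{def:nftpd}'s choice of~$\mu$ strictly inside the gap is used (rather than just $\|G\|$, $\|H\|$ appearing there, one needs the analogue for the $\Xv$‑block to be implicitly controlled by the $\alpha<\mu\pm\delta$ inequality together with strict nonlinearity of~$\Fv_c$). A secondary subtlety is that all three inequalities are only asserted "for as long as solutions stay in~$\DD_\mu$", so the Grönwall arguments must be run on the (relatively open) time set where $\Uv(t)\in\DD_\mu$, which is an interval containing the relevant endpoint — no continuation or trapping claim is being made, so this is routine once stated precisely.
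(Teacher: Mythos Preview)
Your proposal is correct and matches the paper's approach essentially line for line: the paper defines the Lyapunov norm $|\Yv|_Q:=|Q^{-1}\Yv|$ (your $|\tilde\Yv|$), derives the same differential inequality $\tfrac{d}{dt}|\Yv|_Q^2<2(-\beta+\delta+\cond Q\,\|G\|)|\Yv|_Q^2<-2\mu|\Yv|_Q^2$, and converts back to the $2$-norm at the cost of $\cond Q$; the $\Zv$ and $\Xv$ bounds are handled just as you outline. The one concrete step the paper adds for the obstacle you flagged is to factor the nonlinearity via Lagrange's remainder, $\Fv_c(t,\Xv)=F(t,\Xv)\Xv$ with $F(t,\Xv):=\Fv_{\Xv}(t,\xi\Xv,\ov,\ov)$, so that the $\Xv$-equation has the linear-in-$\Xv$ form needed for the same Lyapunov argument---though, as you suspected, the resulting bound $\alpha+\delta+\cond P\,\|F\|<\mu$ is asserted rather than drawn explicitly from \cref{def:nftpd}.
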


\begin{proof} 
First establish the bound \(|\Yv(t)|\leq \cond Q\,|\Yv(s)|e^{-\mu (t-s)}\).
The case of initial state $\Yv(s)=\ov $ is trivial, so consider $\Yv(s)\neq \ov $\,.   
As suggested by Chicone (private communication, 2013), a Lyapunov function establishes that $\Yv=\ov $ is exponentially quickly attractive.  
Define the (Lyapunov function) vector norm \(|\Yv|_Q:=|Q^{-1}\Yv|\) in terms of the usual \(2\)-norm \(|\xv|:=\sqrt{\tr{\xv}\xv}\), and in terms of the matrix~\(Q\) that transforms matrix~\(B\) (\cref{def:nftpd}).
Let \(J:=Q^{-1}BQ\), then straightforward algebra derives that
\begin{equation}
\de t{}|\Yv|_Q^2=\tr{(Q^{-1}\Yv)}\left[
J+\tr J+(Q^{-1}GQ)+\tr{(Q^{-1}GQ)} \right](Q^{-1}\Yv).
\label{eq:nfyvbnd}
\end{equation}
Consider two parts of the right-hand side in turn.
\begin{itemize}
\item From the real Jordan form (\cref{def:nftpd}), then symmetric \(\tr J+J=2\diag(\Re \beta_j)+\Delta\) for eigenvalues~\(\beta_j\) of~\(B\) and symmetric \(\Delta\) being zero except for some sub/super-diagonal elements of at most~\(\delta\).
Consequently the quadratic
\begin{eqnarray*}
\tr{(Q^{-1}\Yv)}\left[J+\tr J\right](Q^{-1}\Yv)
&\leq& |Q^{-1}\Yv|^2\max \lambda(J+\tr J)
\\&\leq&|\Yv|_Q^2\max_j (2\Re\beta_j\pm2\delta)
\\&<&-2(\beta-\delta) |\Yv|_Q^2.
\end{eqnarray*}

\item The remaining part of the right-hand side is usefully bounded since
\begin{eqnarray*}
&&\left|\tr{(Q^{-1}\Yv)}\left[(Q^{-1}GQ)+\tr{(Q^{-1}GQ)} \right](Q^{-1}\Yv)\right|
\\&&{}
\leq 2|Q^{-1}\Yv|\cdot\|Q^{-1}GQ\|\cdot|Q^{-1}\Yv|
\\&&{}
\leq 2\|Q^{-1}\|\cdot\|G\|\cdot\|Q\|\cdot|Q^{-1}\Yv|^2
= 2\cond Q\,\|G\|\cdot|\Yv|_Q^2\,.
\end{eqnarray*}
\end{itemize}
Consequently, in the domain~\(\DD_\mu\) equation~\cref{eq:nfyvbnd} ensures the inequality
\begin{equation*}
\de t{}|\Yv|_Q^2 <2\big(-\beta+\delta+\cond Q\,\|G\|\big)|\Yv|_Q^2
<-2\mu|\Yv|_Q^2\,.
\end{equation*}
Standard comparison theorems then ensure \(|\Yv(t)|_Q^2\leq |\Yv(s)|_Q^2e^{-2\mu (t-s)}\); that is, \(|\Yv(t)|_Q\leq |\Yv(s)|_Qe^{-\mu (t-s)}\).  
Thus the stable variables~\(\Yv\) decay to zero exponentially quickly in the \(Q\)-norm.
In the usual \(2\)-norm there may be some transient growth characterised by the condition number of~\(Q\) \cite[e.g.,][]{Trefethen99}.  
Returning to the \(2\)-norm, we derive the bound
\begin{eqnarray*}
|\Yv(t)|&=&|QQ^{-1}\Yv(t)|
\\&\leq& \|Q\|\cdot|Q^{-1}\Yv(t)|
=\|Q\|\cdot|\Yv(t)|_Q
\\&\leq& \|Q\|\cdot|\Yv(s)|_Qe^{-\mu (t-s)}
= \|Q\|\cdot|Q^{-1} \Yv(s)|e^{-\mu(t-s)}
\\&\leq& \|Q\|\cdot\|Q^{-1}\|\cdot| \Yv(s)|e^{-\mu(t-s)}
= \cond Q\,| \Yv(s)|e^{-\mu(t-s)}
\end{eqnarray*}
for as long as solutions stay in the domain~\(\DD_\mu\) \cite[cf.][Lemma~5.3.1]{Murdock03}.

Second, a proof for the bound \(|\Zv(s)|\leq \cond R\,|\Zv(t)|e^{-\mu (t-s)}\) is the corresponding argument but backward in time.

Third, establish the bound for~\Xv.
Under the proviso that one of \(\Yv(s)=\ov\) or \(\Zv(s)=\ov\), and hence by~\cref{eqs:stsesdenf} is zero for all \(t\in\TT_\mu\).
By the form of~\cref{eq:stsesdenfxx}, then \(\Fv(t,\Xv,\Yv,\Zv)=\Fv(t,\Xv,\ov,\ov)\).
Since \(\DD_\mu\) is star-shaped, by Lagrange's remainder theorem
\begin{equation*}
\Fv(t,\Xv,\ov,\ov)={\Fv(t,\ov,\ov,\ov)}+F(t,\Xv)\Xv=F(t,\Xv)\Xv
\end{equation*}
for matrix \(F(t,\Xv):=\Fv_\Xv(t,\xi\Xv,\ov,\ov)\) for some \(0\leq\xi(t,\Xv)\leq1\)\,.
Then by corresponding arguments to those for~\Yv,
\begin{equation*}
\de t{}|\Xv|_P^2 <2\big(\alpha+\delta+\cond P\,\|F\|\big)|\Xv|_P^2
<2\mu|\Xv|_P^2\,,
\end{equation*}
which ensures \(|\Xv(t)|_P^2\leq|\Xv(s)|_P^2e^{2\mu(t-s)}\) for \(t\geq s\)\,.
Similarly, in integrating backward in time
\begin{equation*}
-\de t{}|\Xv|_P^2 <2\big(\alpha+\delta+\cond P\,\|F\|\big)|\Xv|_P^2
<2\mu|\Xv|_P^2\,,
\end{equation*}
so \(|\Xv(t)|_P^2\leq|\Xv(s)|_P^2e^{2\mu|t-s|}\).
Then by an argument corresponding to that for~\Yv, we deduce the 2-norm bound
\begin{equation*}
|\Xv(t)|\leq\cond P\,|\Xv(s)|e^{\mu|t-s|}
\end{equation*}
for as long as solutions stay in~\(\DD_\mu\) and provided \(|\Yv(s)|\cdot|\Zv(s)|=0\).
\end{proof}

\cref{def:nftpd} assures us that for the normal form~\cref{eqs:stsesdenf} the important center-stable manifold  exists in a finite domain.
The following \cref{lem:nfftd} establishes one minimum bound on the size of the finite domain.
Cognate results apply to other invariant manifolds.

\begin{lemma}[finite domain] \label{lem:nfftd}
Restrict attention to the non-empty center-stable manifold,~\(\MM_{cs}\), of the polynomial, normal form, system~\cref{eqs:stsesdenf}.
The center-stable manifold~$\MM_{cs}$ in~\(\DD_\mu\) contains the ball~\(B_\mu\) in~\(\MM_{cs}\) centred on the origin and of radius $(\beta-\mu-\delta)^2/\big[2\cond^2(Q)G'_{\max}\big]$,  where  \(G'_{\max}=\sup_{t\in\TT_\mu,\Uv\in B_\mu}\big|\sum_{i,j}g_{ij}\grad g_{ij}\big|\) in terms of the elements of \(G=\begin{bmatrix} g_{ij}(t,\Uv) \end{bmatrix}\).
\end{lemma}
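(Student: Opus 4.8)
The plan is to verify directly that the defining inequality of \cref{def:nftpd} holds throughout the ball $B_\mu$, so that $B_\mu$ (read inside $\{\Zv=\ov\}$) is a legitimate piece of an emergence-preserving domain~$\DD_\mu$; then $\MM_{cs}\supseteq B_\mu$ by \cref{def:nfnfim}. On~$\MM_{cs}$ one has $\Zv=\ov$, so only the $\Yv$-decay statement of \cref{lem:exptri} is at issue, and that requires only the bound $\cond(Q)\|G(t,\Uv)\|<\beta-\mu-\delta$; hence it suffices to control $\|G\|$ on~$B_\mu$. (The $H$-part of \cref{def:nftpd} is not binding on the center-stable manifold, where $\Zv=\ov$; if one insists on keeping $\DD_\mu$ open in $\RR^{m+n+\ell}$ it is arranged by also shrinking $\DD_\mu$ in the $\Zv$-directions, using that $H$ is $\Orc1$.)

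The single substantive estimate is a radial Lagrange-remainder bound on $\|G\|$. Since $G\Yv=\Orc2$ by \cref{thm:stsenfp}, each entry $g_{ij}(t,\Uv)$ is $\Orc1$, so $g_{ij}(t,\ov)=0$ and the scalar multinomial $\psi(t,\Uv):=\tfrac12\|G(t,\Uv)\|_F^2=\tfrac12\sum_{i,j}g_{ij}(t,\Uv)^2$ vanishes at $\Uv=\ov$ with $\grad\psi=\sum_{i,j}g_{ij}\grad g_{ij}$, hence $|\grad\psi(t,\Uv)|\le G'_{\max}$ for $\Uv\in B_\mu$. Because $B_\mu$ is a ball about the origin it is convex, so applying the one-dimensional mean value theorem to $s\mapsto\psi(t,s\Uv)$ on $[0,1]$ yields, for each $\Uv\in B_\mu$, some $\xi=\xi(t,\Uv)\in[0,1]$ with $\xi\Uv\in B_\mu$ and
\[
\psi(t,\Uv)=\grad\psi(t,\xi\Uv)\cdot\Uv
=\sum_{i,j}g_{ij}(t,\xi\Uv)\,\bigl(\grad g_{ij}(t,\xi\Uv)\cdot\Uv\bigr)
\le G'_{\max}\,|\Uv|.
\]
Since the matrix $2$-norm is dominated by the Frobenius norm, $\|G(t,\Uv)\|^2\le\|G(t,\Uv)\|_F^2=2\psi(t,\Uv)\le 2G'_{\max}|\Uv|$.

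It then remains to solve the resulting inequality. The spectral-gap condition for $\DD_\mu$ gives $\beta-\mu-\delta>0$, so for $\Uv\in B_\mu$, where $|\Uv|<(\beta-\mu-\delta)^2/\bigl[2\cond^2(Q)G'_{\max}\bigr]$,
\[
\cond^2(Q)\,\|G(t,\Uv)\|^2\le 2\,\cond^2(Q)\,G'_{\max}\,|\Uv|<(\beta-\mu-\delta)^2,
\]
whence $\cond(Q)\|G(t,\Uv)\|<\beta-\mu-\delta$, which (as $\delta\ge0$) is exactly $\mu\pm\delta<\beta-\cond(Q)\|G(t,\Uv)\|$ for all $t\in\TT_\mu$ and all $\Uv\in B_\mu$. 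Thus $B_\mu$ satisfies the requirement of \cref{def:nftpd}, so restricted to $\{\Zv=\ov\}$ it lies within a valid $\DD_\mu$, and by \cref{def:nfnfim} the center-stable manifold $\MM_{cs}$ contains $B_\mu$.

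\textbf{Expected obstacle.} The only genuinely delicate points are presentational rather than deep: first, packaging the \emph{matrix}-norm control of $G$ so that a \emph{scalar} Lagrange remainder produces precisely the advertised constant --- the trick being to apply the remainder theorem to $\tfrac12\|G\|_F^2$, whose gradient is the vector $\sum_{i,j}g_{ij}\grad g_{ij}$ appearing in $G'_{\max}$, and then to absorb the $\cond(Q)$ factor by squaring; second, the apparent self-reference in the statement (the radius of $B_\mu$ depends on $G'_{\max}$, itself a supremum over $B_\mu$), which is resolved by noting that $G'_{\max}$ is non-decreasing and continuous in the radius and bounded for bounded radius, so a consistent choice exists --- equivalently one takes the largest radius for which the displayed inequality closes (and if $G'_{\max}=0$, i.e.\ the stable block is linear, the domain is unbounded). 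Everything else is routine bookkeeping using \cref{ass:givensys}, \cref{thm:stsenfp} and \cref{lem:exptri}.
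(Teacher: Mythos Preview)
Your proposal is correct and follows essentially the same route as the paper: bound the spectral norm of~$G$ by its Frobenius norm, apply a Lagrange remainder to $\|G\|_F^2$ along rays from the origin using $G(t,\ov)=0$, and rearrange the resulting inequality $\|G\|^2\le 2|\Uv|G'_{\max}$ into the stated radius. Your extra remarks on why only the $G$-bound matters on~$\MM_{cs}$ and on the self-referential definition of~$G'_{\max}$ are more careful than the paper's own presentation but do not change the argument.
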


\begin{full}
\begin{example} 
Before proving \cref{lem:nfftd}, let's apply it to the 2D example of \cref{eg:nfect}. 
The \(Y\)-evolution~\cref{eq:cmECT} has \(G=g_{11} =1-1/(1+2X^2)-4X^2 =2X^2/(1+2X^2)-4X^2\).  
Then 
\begin{eqnarray*}
g_{11}\grad g_{11}
&=&\left[\frac{2X^2}{1+2X^2}-4X^2\right]
\left[\frac{4X}{(1+2X^2)^2}-8X\right]
\\&=&8X^3\left[2-\frac{1}{1+2X^2}\right]
\left[2-\frac{1}{(1+2X^2)^2}\right]
\\
\implies |g_{11}\grad g_{11}|&\leq&32|X|^3.
\end{eqnarray*}
Thus, in \(|X|<r\) this identity gives \(|g_{11}\grad g_{11}| <G'_{\max}\leq 32r^3 \).
Since here \(\beta=\cond Q=1\) and \(\delta=0\)\,, the radius~\(r\) of the ball~\(B_\mu\) in the lemma requires \(r<(1-\mu)^2/(64r^3)\); that is, \(r<\sqrt{1-\mu}/(2\sqrt2)\). 
This \cref{lem:nfftd} certifies that the shaded domain of \cref{fig:ctECT}~(right) is large enough to contain these balls of radius~\(\sqrt{1-\mu}/(2\sqrt2)\).
But further, \cref{lem:exptri} guarantees, for every \(0<\mu<1\)\,, that solutions decay to \(Y=0\) at least as fast as~\(e^{-\mu t}\) while inside~\(B_\mu\).
\end{example}
\end{full}

\begin{example} 
Before proving \cref{lem:nfftd}, let's apply it to the 2D example of \cref{sec:stseeg}. 
The \(Y\)-evolution~\cref{eq:stsetoyXYY} has \(G=g_{11}(t,X,\sigma) =-4 \sigma w(t) -2 X^{2} \).
For this theory we account for the  \(\sigma\)-dependence by defining \(\Xv=(X,\sigma)\), and then 
\(\grad g_{11}=-4(X,w)\), which gives  
\(g_{11}\grad g_{11}=8(2\sigma w+X^2)(X,w)\).
Consequently, for \(\sigma\geq0\) and when \(|w(t)|\leq1\) as in \cref{sec:stseeg}, \(|g_{11}\grad g_{11}|\leq8|2\sigma+X^2|\sqrt{1+X^2}\).
Thus, in \(|X|<r\) this bound gives \(|g_{11}\grad g_{11}| <G'_{\max}\leq 8(2\sigma+r^2)\sqrt{1+r^2} \).
Since here \(\beta=\cond Q=1\) and \(\delta=0\)\,, the radius~\(r\) of the ball~\(B_\mu\) in the lemma requires \(r<(1-\mu)^2/\big[ 16 (2\sigma+r^2) \sqrt{1+r^2} \big]\); that is, \(\mu<1-\sqrt{16 (2\sigma+r^2) \sqrt{1+r^2}}\). 
This \cref{lem:nfftd} certifies that the domain of validity of the conjugate system~\eqref{eqs:stsetoyxy}+\cref{eqs:stsetoyXY} is large enough to contain the balls of radius plotted in \cref{fig:rofmu}.
\cref{lem:exptri} guarantees, for every \(0<\mu<1\) as plotted in \cref{fig:rofmu}, that solutions decay to \(Y=0\) at least as fast as~\(e^{-\mu t}\) while inside~\(B_\mu\).
\begin{figure}
\centering
\begin{tabular}{@{}ll@{}}
    \parbox[t]{0.45\textwidth}{%
    \caption{\label{fig:rofmu}%
    lower bound of the radius~$r$ of the ball~\(B_\mu\) for the example of \cref{sec:stseeg}.  Two cases are plotted: \(\sigma=0\) has no time-dependent forcing; \(\sigma=1/4\) is for every integrable forcing satisfying \(|w(t)|\leq1\), such as \(w=\cos t\) for the simulations in \cref{fig:stsesim1}.
    }}
&
\raisebox{-\height}{\tikzsetnextfilename{stseeg}
\begin{tikzpicture}
   \begin{axis}[small,xlabel={decay rate $\mu$},ylabel={lower bound $r$}]
   \addplot+[no marks,domain=0:0.62] ({1-sqrt(16*x^2*(2*0+x^4)*sqrt(1+x^4))},{x^2});
   \addlegendentry{$\sigma=0$}
   \addplot+[no marks,domain=0:0.34] ({1-sqrt(16*x^2*(2/4+x^4)*sqrt(1+x^4))},{x^2});
   \addlegendentry{$\sigma=1/4$}
   \end{axis}
\end{tikzpicture}}
\end{tabular}
\end{figure}%

Extant forward theory also indicates such rates of decay in some small enough neighbourhood.  
But it is this backwards approach that gives a novel lower bound for the size of the neighbourhood.
\end{example}

\begin{proof}[Proof of \cref{lem:nfftd}]
By \cref{def:nftpd}, the size of~\(\DD_\mu\) and hence~$\MM_{cs}$ is limited by the condition \(\mu+\delta<\beta-\cond(Q)\|{G}(t,\Uv)\|\); that is, by \(\|G\|^2<(\beta-\mu-\delta)^2/\cond^2 Q\)\,.
Since \(\|G\|^2\leq\|G\|_F^2:=\sum_{i,j}g_{ij}^2\) (from Mirsky's theorem), this condition is assured by \(\|G\|_F^2<(\beta-\mu-\delta)^2/\cond^2 Q\)\,.
By a multivariable remainder theorem in~\(\Uv\), \(\|G\|_F^2=\|G(t,\ov)\|_F^2+\Uv\cdot\grad(\|G\|_F^2)\big|_{\xi\Uv}\) for some \(0\leq\xi\leq1\)\,.
Since \(G(t,\ov)=0\), this gives 
\begin{equation*}
\|G\|^2\leq\|G\|_F^2=\Uv\cdot\grad(\|G\|_F^2)\big|_{\xi\Uv}
\leq|\Uv|\left|\sum_{i,j}2g_{ij}\grad g_{ij}\right|_{\xi\Uv}
\leq2|\Uv|G'_{\max}\,.
\end{equation*}
Consequently the requisite condition holds when \(2|\Uv|G'_{\max}<(\beta-\mu-\delta)^2/\cond^2 Q\) which rearranges to the criterion in \cref{lem:nfftd}.
\end{proof}

The next step is to map the invariant manifolds of the normal form system~\cref{eqs:stsesdenf} into the original variables~\(\xv\), \(\yv\) and~\(\zv\).
Then we relate the system~\cref{eq:stsexform}+\cref{eqs:stsesdenf}, with its exact invariant manifolds, to the original system~\cref{eqs:stsesde}.
To understand the mapping, we characterise the near identity, polynomial, coordinate transform~\cref{eq:stsexform} constructed by the induction \text{of \cref{sec:stsenfsmsde}.}

Our approach is different to that used by \cite{Murdock03} [Chap.~5]  for autonomous systems.  
In essence Murdock uses the coordinate transform~\cref{eq:stsexform} to map the original dynamics~\cref{eqs:stsesde} into the \(\Xv\Yv\Zv\)-space, calls that the \emph{full system} and typically denotes it by~\(\dot x=a(x)\) with \(x\) denoting the transformed variables \cite[p.296]{Murdock03}.  
He calls the normal form system~\cref{eqs:stsesdenf} the \emph{truncated system}, denoted \(\dot y=\widehat a(y)\).  
Murdock then compares invariant manifolds of these two systems; that is, he compares the invariant manifolds in what we call the \(\Xv\Yv\Zv\)-space.  
In contrast, here we use the coordinate transformation~\cref{eq:stsexform} to map the invariant manifolds of the normal form~\cref{eqs:stsesdenf} back into the \(\xv\yv\zv\)~variables of the original `physical' system~\cref{eqs:stsesde} and aim to make comparisons in that original space of the `physical' variables.

\begin{full}
Recall that we mostly use capital letters for quantities in the $\Xv\Yv\Zv$-space, such as the domains~$D_\mu$, and mostly use lowercase letters for quantities in the $\xv\yv\zv$-space, such as corresponding domains~$d_\mu$.
\end{full}

\begin{definition}[diffeomorphic domain]  \label{def:nfdd}
For every given coordinate transformation~\cref{eq:stsexform} constructed to order~\(p\) for \cref{thm:stsenfp}, define a (star-shaped) domain $D_\mu\subseteq \DD_\mu$ such that the coordinate transform~\cref{eq:stsexform} is a $\cC^p$-diffeomorphism
onto a domain~$d_\mu\subseteq d$.
\end{definition}

Since the coordinate transform~\cref{eq:stsexform} is near identity, then by continuity of derivatives, this domain~$D_\mu$ is a (finite) neighbourhood of the equilibrium at the~origin.

\subsection{Invariant\slash integral manifolds exist}
\label{sec:nfsime}

Instead of proving there exists a center manifold for the specified system~\cref{eqs:stsesde}, which is then approximately constructed, the next \cref{thm:nfimex} establishes that there is a system, such as~\cref{eq:stsexform}+\cref{eqs:stsesdenf}, `close' to the specified system and that has invariant\slash integral manifolds which we know exactly.
This is an example of a `backward error' theory \cite[e.g.,][]{Grcar2011}, whereas all previous invariant\slash integral manifold theory addresses `forward errors'.

In applications we \emph{never} know the exact mathematical models: \emph{all} our mathematical models are approximate.
Consequently, a slight enough perturbation to a prescribed mathematical model may well be as good a description of reality as the prescribed model.  
The backward approach here establishes properties about such slightly perturbed models.
Thus the `backward' theorems proved here may be just as useful in applications as `forward' theorems. 
Moreover, our backward theorems should apply to a significantly wide and  useful class of dynamical systems.  

The following \cref{thm:nfimex} establishes the domain of existence of manifolds invoked by \cref{cor:stbackex,pro:stbackem}. 
The theorem establishes `asymptotic closeness': future research is planned to derive a bound on the `closeness'.

\begin{theorem}[invariant\slash integral manifolds exist] 
\label{thm:nfimex}
Consider any dynamical system~\cref{eqs:stsesde} satisfying \cref{ass:givensys}. 
For all orders \(2\leq p\leq\fp\) and a chosen rate~\(\mu\), there 
exists a dynamical system which is both \Orc{p}~close to the 
system~\cref{eqs:stsesde}, and which possesses 
center, stable, unstable, center-stable, center-unstable manifolds (\cref{def:nfnfim}), denoted respectively by~\(\cM_c\), \(\cM_s\), \(\cM_u\), \(\cM_{cs}\) and~\(\cM_{cu}\), in a domain~\(d_\mu\) (\cref{def:nfdd}) for time interval~\(\TT_\mu\).%
\footnote{Although this \cref{thm:nfimex} only asserts that there is a dynamical system, there are vastly many such dynamical systems.
Firstly, there is the freedom identified at lower orders in the inductive proof of \cref{thm:stsenfp}.
Secondly, there is considerable freedom in choosing higher order terms in the polynomials.}
\end{theorem}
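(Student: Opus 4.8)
The plan is to assemble \cref{thm:nfimex} almost entirely from machinery already in place, so the ``proof'' is really a bookkeeping exercise that threads together \cref{thm:stsenfp}, \cref{def:nftpd}, \cref{def:nfnfim}, \cref{lem:exptri} and \cref{def:nfdd}. First I would fix the order \(p\) with \(2\leq p\leq\fp\) and the rate~\(\mu\), and invoke \cref{thm:stsenfp}: under \cref{ass:givensys} there exists a near identity, multinomial, time-dependent coordinate transform~\cref{eq:stsexform} and an associated multinomial normal form system~\cref{eqs:stsesdenf} whose composite~\cref{eq:stsexform}+\cref{eqs:stsesdenf} differs from the given system~\cref{eqs:stsesde} by~\Orc{p}. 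That composite system is the ``nearby'' system whose existence the theorem asserts.

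Next I would produce the domain in which the invariant manifolds live. Given the chosen~\(\mu\), apply \cref{def:nftpd} to the normal form system~\cref{eqs:stsesdenf} to obtain the star-shaped emergence-preserving domain~\(\DD_\mu\) and time interval~\(\TT_\mu\); the non-triviality remarks following \cref{def:nftpd} (that \(G\Yv,H\Zv=\Orc2\), so the origin lies in~\(\DD_\mu\) for all time, and continuity makes~\(\DD_\mu\) a genuine neighbourhood on~\(\TT_\mu\)) guarantee this is a nonempty object. Then \cref{def:nfdd} supplies a star-shaped subdomain \(D_\mu\subseteq\DD_\mu\) on which the transform~\cref{eq:stsexform} is a \(\cC^p\)-diffeomorphism onto \(d_\mu\subseteq d\); since~\cref{eq:stsexform} is near identity, continuity of derivatives makes~\(D_\mu\) (hence~\(d_\mu\)) a finite neighbourhood of the origin. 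At this point \cref{def:nfnfim} does the rest: within~\(\DD_\mu\) the coordinate subspaces \((\Xv,\ov,\ov)\), \((\ov,\Yv,\ov)\), \((\ov,\ov,\Zv)\), \((\Xv,\Yv,\ov)\) and \((\Xv,\ov,\Zv)\) are by construction invariant for~\cref{eqs:stsesdenf} (the decoupled block structure of~\cref{eq:stsesdenfxx}--\cref{eq:stsesdenfzz} makes each such set forward- and backward-invariant), and pushing them through the diffeomorphism~\(\uv=\uv(t,\Uv)\) on \(\MM_i\cap D_\mu\) yields the manifolds \(\cM_c,\cM_s,\cM_u,\cM_{cs},\cM_{cu}\) in~\(d_\mu\) for \(t\in\TT_\mu\). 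A one-line check that each \(\MM_i\) is nonempty (each contains the origin) and that these sets satisfy the qualitative characterisation near the equilibrium — which is exactly what \cref{lem:exptri}'s exponential trichotomy on~\(\DD_\mu\) certifies — completes the argument.

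The only genuinely delicate point, and the one I would be careful to state rather than gloss, is the matching between ``\Orc{p}~close'' as a statement about the residual of the normal form construction and ``\Orc{p}~close'' as a statement about the difference of the two vector fields: this is precisely the role played by Proposition~3.6 of \cite{Potzsche2006}, already cited inside the proof of \cref{thm:stsenfp}, so I would simply point back to \cref{thm:stsenfp} for it rather than re-prove it. A secondary subtlety is purely a matter of definition-tracking: \cref{def:nfnfim} defines the manifolds in terms of \(D_\mu\subseteq\DD_\mu\), and one must be consistent about whether the asserted domain of existence is \(d_\mu\) (the image of \(D_\mu\)) or merely the image of some subset — the clean statement is that the manifolds exist throughout~\(d_\mu\) on~\(\TT_\mu\), which is what \cref{def:nfdd} and \cref{def:nfnfim} together deliver. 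Beyond those two points the proof is essentially ``apply \cref{thm:stsenfp}, then \cref{def:nftpd}, then \cref{def:nfdd}, then \cref{def:nfnfim}, and invoke \cref{lem:exptri} for the characterisation'', so I expect it to be short; the main obstacle is resisting the temptation to re-derive things the earlier results already give for free, and instead being scrupulous that every hypothesis of each invoked result (especially \cref{ass:givensys} for \cref{thm:stsenfp}, and \(\mu\) lying in the spectral gap for \cref{def:nftpd} and \cref{lem:exptri}) is on the table.
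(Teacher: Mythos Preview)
Your proposal is correct and takes essentially the same approach as the paper: invoke \cref{thm:stsenfp} for the \Orc{p}-close system~\cref{eq:stsexform}+\cref{eqs:stsesdenf}, then read off the invariant manifolds from \cref{def:nfnfim} in~\(D_\mu\) and push them through the diffeomorphism to~\(d_\mu\). The paper's proof is even terser than yours---it does not separately invoke \cref{def:nftpd} or \cref{lem:exptri}, treating those as already absorbed into \cref{def:nfnfim} and \cref{def:nfdd}---but your more explicit definition-tracking is harmless and arguably clearer.
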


\begin{proof} 
The hard work has already been done.
For every order~\(p\), \cref{thm:stsenfp} establishes that there exists such a dynamical system that is \Orc{p}~close: namely the combination of the normal form~\cref{eqs:stsesdenf} together with the coordinate transform~\cref{eq:stsexform} define sufficiently close dynamics in the state space of~\cref{eqs:stsesde}.

\cref{def:nfnfim} establishes the existence of the requisite invariant\slash integral manifolds for the normal form~\cref{eqs:stsesdenf} in~\(D_\mu\).
The coordinate transform~\cref{eq:stsexform} maps these into corresponding invariant\slash integral manifolds in~\(\TT_\mu\otimes d_\mu\) for the corresponding system~\cref{eq:stsexform}+\cref{eqs:stsesdenf} in the original state space.
\end{proof}

Some researchers explore the possibility of exponentially small errors in asymptotic statements \cite[e.g.,][]{Jones96b, Cotter06, Iooss2010}.
This possibility arises here immediately from \cref{thm:nfimex} under two further restrictions.
In the case when the eigenvalues of the center modes have precisely zero real-part, \(\alpha=0\)\,, and when the specified dynamical system~\cref{eqs:stsesde} is infinitely continuously differentiable, \(\fp=\infty\)\,, then the iterative construction of the normal form that proves \cref{thm:stsenfp} may be continued to arbitrarily high order~\(p\), in principle:
the reason being that the two constraints on the order~\(p\) are that firstly \(p<\tfrac12(\beta/\alpha+1)=\infty\) and secondly \(p\leq\fp=\infty\) (\cref{ass:givensys}).
To achieve the exponential closeness, for brevity, let \(\epsilon=|(\Xv,\Yv,\Zv)|\)\,, and for any given~\(\epsilon\) choose order~\(p=-c/(\epsilon\log\epsilon)\) for some constant~\(c\).
Then in \cref{thm:nfimex} the order of closeness
\begin{equation*}
\Orc p=\Ord{|\Xv|^p+|\Yv|^p+|\Zv|^p}
=\Ord{\epsilon^p}
=\Ord{e^{p\log\epsilon}}
=\Ord{e^{-c/\epsilon}}
\quad\text{as }\epsilon\to0\,.
\end{equation*}
This exponentially small closeness is a little subtle as it requires higher and higher order construction as \(\epsilon\to0\)\,.
Nonetheless the argument of this paragraph establishes the following corollary of \cref{thm:nfimex}.

\begin{corollary}[exponentially small closeness] \label{cor:nfesc}
Suppose the dynamical system~\cref{eqs:stsesde} satisfies \cref{ass:givensys} for the case of \(\alpha=0\) and infinitely differentiability, \(\fp=\infty\). 
Then there exists a dynamical system exponentially close to the system~\cref{eqs:stsesde}, the difference is~\Ord{\exp\big[-c/|(\Xv,\Yv,\Zv)|\big]} as $(\Xv,\Yv,\Zv)\to\ov $ for some~\(c\),  with center, stable, unstable, center-stable, center-unstable manifolds in~\(\TT_\mu\otimes d_\mu\) (provided the domain does not degenerate as \(\epsilon\to0\)).
\end{corollary}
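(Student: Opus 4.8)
Most of the labour is already discharged by \cref{thm:stsenfp,thm:nfimex}; what remains is an optimal-truncation (diagonal) argument. First I would verify that the restriction \(\alpha=0\) together with infinite differentiability removes both ceilings on the construction order: the spectral-gap requirement \(\beta>(2p-1)\alpha\) of \cref{ass:givensys} reads \(\beta>0\), which holds for every integer \(p\geq2\), while the smoothness requirement \(p\leq\fp\) reads \(p\leq\infty\). Hence the induction proving \cref{thm:stsenfp} may be iterated without termination, yielding for each order \(p\) a coordinate transform~\cref{eq:stsexform} and normal form~\cref{eqs:stsesdenf} whose difference from~\cref{eqs:stsesde} is~\Orc{p}, and, by \cref{thm:nfimex}, carrying the center, stable, unstable, center-stable and center-unstable manifolds of \cref{def:nfnfim} in the associated \(\TT_\mu\otimes d_\mu\) (with \(d_\mu\) as in \cref{def:nfdd}).

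Second, set \(\epsilon=|(\Xv,\Yv,\Zv)|\) and, given \(\epsilon\), select the construction order to depend on amplitude, say \(p=p(\epsilon)=\big\lceil -c/(\epsilon\log\epsilon)\big\rceil\) for a fixed constant \(c>0\); this makes \(\epsilon^{p(\epsilon)}=e^{p(\epsilon)\log\epsilon}=\Ord{e^{-c/\epsilon}}\) as \(\epsilon\to0\). Feeding this choice into the \Orc{p}\ bound of \cref{thm:nfimex} gives that the order-\(p(\epsilon)\) system differs from~\cref{eqs:stsesde} by \(\Orc{p(\epsilon)}=\Ord{\epsilon^{p(\epsilon)}}=\Ord{e^{-c/\epsilon}}\). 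The family of order-\(p\) systems is nested in the sense that successive members differ only by \Orc{p}\ corrections, so their formal limit is a well-defined formal series in \((\Xv,\Yv,\Zv)\) with bounded time-dependent (convolutional) coefficients; a Borel--Ritt-type construction then assembles a single \(\cC^\infty\) system asymptotic to that series, differing from~\cref{eqs:stsesde} to infinite order, and by the amplitude-dependent truncation just described that single system in fact differs by \(\Ord{e^{-c/\epsilon}}\). The proviso ``the domain does not degenerate as \(\epsilon\to0\)'' is exactly what guarantees a fixed \(\TT_\mu\otimes d_\mu\) on which all the manifolds of \cref{thm:nfimex} persist through this limiting procedure, so under it the invariant\slash integral manifolds survive.

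The step I expect to be the crux is legitimising the passage \(\Orc{p(\epsilon)}=\Ord{e^{-c/\epsilon}}\): the symbol \Orc{p}\ hides a constant \(K_p\) depending on the order, accumulated through the induction of \cref{thm:stsenfp} by composing near-identity transforms and dividing by the fast denominators \(\nu-\mu\) in \cref{eq:stsemux,eq:stsemuy}. One must show \(K_p\) grows no worse than Gevrey-\(1\), that is \(K_p\lesssim M^p p!\), so that \(\inf_p K_p\epsilon^p=\Ord{e^{-c/\epsilon}}\); the favourable feature is that here the center spectrum is exactly imaginary, so the genuinely resonant terms are shunted into the normal form rather than divided, keeping the active denominators bounded away from zero. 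Tracking this accumulation --- and, if the prescribed~\cN\ is merely \(\cC^\infty\) with rapidly growing Taylor coefficients, restricting attention to the jets that actually enter at each order --- is the only delicate part; everything else is bookkeeping on top of \cref{thm:stsenfp,thm:nfimex}.
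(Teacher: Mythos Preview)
Your proposal follows essentially the same route as the paper: observe that \(\alpha=0\) and \(\fp=\infty\) remove both ceilings on the construction order, set \(\epsilon=|(\Xv,\Yv,\Zv)|\), choose \(p=-c/(\epsilon\log\epsilon)\), and compute \(\Orc{p}=\Ord{\epsilon^p}=\Ord{e^{-c/\epsilon}}\). The paper's argument is exactly this diagonal/optimal-truncation calculation, presented in the paragraph immediately preceding the corollary.

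Where you and the paper differ is in rigour, and in your favour. The paper does \emph{not} track the order-dependent constant hidden in \(\Orc{p}\), does not invoke any Gevrey-type bound on~\(K_p\), and does not assemble a single \(\cC^\infty\) system via a Borel--Ritt construction: it simply performs the formal manipulation \(\epsilon^{p(\epsilon)}=e^{-c/\epsilon}\) and notes the subtlety that ``higher and higher order construction'' is required as \(\epsilon\to0\). Your third paragraph correctly identifies this as the genuine technical crux, and your proposed resolution (Gevrey-1 growth of~\(K_p\), bounded denominators because resonances are shunted into the normal form) is the standard way one would close the gap---but be aware that the paper itself leaves this step at the heuristic level and claims the corollary on the strength of the formal calculation alone.
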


An immediate partnering consequence of the existence \cref{thm:nfimex} addresses the evolution of the approximating system~\cref{eq:stsexform}+\cref{eqs:stsesdenf} on its invariant\slash integral manifolds in the original state space.
The following \cref{cor:nfeom} provides the evolution invoked in \cref{cor:stbackex,pro:stbackem}.

\begin{corollary}[evolution on manifolds] \label{cor:nfeom}
The evolution of the approximate system~\cref{eq:stsexform}+\cref{eqs:stsesdenf} in~\(\TT_\mu\otimes d_\mu\) on any of the invariant\slash integral manifolds~\(\cM_i\), \(i\in\{c,s,u,cs,cu,0\}\), is described by the system~\cref{eqs:stsesdenf} restricted to~\(\MM_i\) and transformed by~\cref{eq:stsexform}.
\end{corollary}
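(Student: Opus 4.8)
The plan is to observe that this corollary is essentially an unwinding of the fact that~\cref{eq:stsexform} is, by construction, an \emph{exact} conjugacy between the normal form~\cref{eqs:stsesdenf} and the approximate system~\cref{eq:stsexform}+\cref{eqs:stsesdenf} written in the original $\xv\yv\zv$ variables, combined with the coordinate-subspace description of the invariant manifolds in~\cref{def:nfnfim}. First I would recall that each~$\MM_i$, $i\in\{c,s,u,cs,cu,0\}$, is a coordinate subspace of $(\Xv,\Yv,\Zv)$-space intersected with~$\DD_\mu$ (for instance $\MM_{cs}=\{(\Xv,\Yv,\ov)\in\DD_\mu\}$), and that each such subspace is invariant under~\cref{eqs:stsesdenf} for as long as the solution stays in~$\DD_\mu$: setting $\Zv=\ov$ in~\cref{eq:stsesdenfzz} gives $\dot\Zv=[C+H]\ov=\ov$, so $\Zv\equiv\ov$ is preserved, and similarly $\Yv=\ov$ is preserved via~\cref{eq:stsesdenfyy}; in particular (as already noted in \cref{lem:exptri}) the center, stable, unstable, center-stable, center-unstable and slow subspaces are all flow-invariant in~$\DD_\mu$. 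Hence the restriction of~\cref{eqs:stsesdenf} to~$\MM_i$ is a well-defined reduced \ode\ system on~$\MM_i$; for example on~$\MM_c$ it collapses to $\dot\Xv=A\Xv+\Fv_c(t,\Xv)$ because the cross term $\cF\Yv\Zv$ in~\cref{eq:stsesdenfxx} vanishes and $\dot\Yv=\dot\Zv=\ov$.

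Next I would invoke~\cref{def:nfdd}, which supplies a star-shaped subdomain $D_\mu\subseteq\DD_\mu$ on which~\cref{eq:stsexform} is a $\cC^p$-diffeomorphism onto~$d_\mu$, and note that by~\cref{def:nfnfim} its restriction to $\MM_i\cap D_\mu$ is a $\cC^p$-diffeomorphism onto~$\cM_i$. Writing~\cref{eq:stsexform} concisely as $\uv=\uv(t,\Uv)$, the approximate system in the $\xv\yv\zv$ variables is defined to be the pushforward of~\cref{eqs:stsesdenf} under this map: a curve $\uv(t)$ solves it precisely when $\Uv(t)$, obtained from $\uv(t)$ by the inverse transform, solves~\cref{eqs:stsesdenf}. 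Therefore a solution of~\cref{eqs:stsesdenf} that starts in~$\MM_i$ remains in~$\MM_i$ (by the previous paragraph, as long as it stays in~$\DD_\mu$), and its image under $\uv(t,\cdot)$ is a solution of the approximate system confined to~$\cM_i$; conversely, by invertibility of the diffeomorphism, every solution of the approximate system lying in~$\cM_i$ arises this way. Consequently the flow on~$\cM_i$ is exactly the image under~\cref{eq:stsexform} of the reduced \ode\ obtained by restricting~\cref{eqs:stsesdenf} to~$\MM_i$, which is the assertion of the corollary.

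Finally, to display the evolution explicitly one differentiates $\uv(t)=\uv(t,\Uv(t))$ by the chain rule, $\dot\uv=\uv_t(t,\Uv)+\uv_\Uv(t,\Uv)\dot\Uv$, and substitutes the restricted right-hand side of~\cref{eqs:stsesdenf} for~$\dot\Uv$; for $i=c$ this recovers the near-identity parametrisation~\cref{eq:stbexsm} with center-manifold evolution~\cref{eq:stbexsme} quoted in~\cref{cor:stbackex}. I do not expect a genuine obstacle here: the statement is close to a definitional restatement, and the only care needed is bookkeeping --- ensuring $\DD_\mu$, $D_\mu$, $d_\mu$ and~$\TT_\mu$ are chosen compatibly so that the restricted manifold pieces and the diffeomorphism domains match up, and carrying through the ``for as long as the solution stays in the domain'' caveat inherited from~\cref{def:nftpd} and~\cref{lem:exptri}.
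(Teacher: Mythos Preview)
Your proposal is correct and matches the paper's approach: the paper does not give an explicit proof of this corollary at all, presenting it instead as an ``immediate partnering consequence'' of \cref{thm:nfimex} and the definitions, and then simply illustrating the center-manifold case via~\cref{eq:nfcmeX}. Your write-up correctly unpacks the definitional content (invariance of the coordinate subspaces~$\MM_i$ under~\cref{eqs:stsesdenf}, and the $\cC^p$-diffeomorphism of~\cref{def:nfdd} carrying these to~$\cM_i$) that the paper leaves implicit.
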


\begin{full}
Centre manifolds are crucial to accurate model reduction of dynamical systems \cite[e.g.,][]{Roberts2014a}.
\end{full}%
Because of the form of system~\cref{eqs:stsesdenf}, the evolution on the center manifold~\(\MM_c\) (\(\Yv=\ov \) and \(\Zv=\ov \)), is \(\dot{\Xv}=A\Xv+\Fv_c(t,\Xv)\).
The coordinate transform~\cref{eq:stsexform} maps this evolution into the original state space to give, as invoked in~\cref{eqs:stbackex}, a parametric description of~\(\cM_c\) and the evolution thereon as
\begin{equation}
\begin{bmatrix} \xv\\
\yv\\
\zv \end{bmatrix}
=\begin{bmatrix} \xv(t,\Xv,\ov ,\ov )\\ 
\yv(t,\Xv,\ov ,\ov )\\
\zv(t,\Xv,\ov ,\ov ) \end{bmatrix}
\quad\text{such that}\quad
\dot{\Xv}=A\Xv+\Fv_c(t,\Xv).
\label{eq:nfcmeX}
\end{equation}

\begin{full}
Most people simplify the parametrisation of the center manifold~\(\cM_c\) by choosing, often implicitly, that the new coordinate~\(\Xv\) be equal to the original~\(\xv\) on~\(\cM_c\).
\cref{egUdwadia2022,eg:qga} make this choice for their systems, but \cref{sec:stseeg} does not.
It is a subjective choice, and, if desired, one may be flexible about how to parametrise the center manifold (\cref{sec:stsenfsmsde}).
Nonetheless, in the many cases when people choose the coordinate transform so that on~\(\cM_c\) \(\xv(t,\Xv,\ov ,\ov )=\Xv\) precisely
\cite[e.g.,][]{Chicone97}, then the evolution on the center manifold~\(\cM_c\) is that on the graph
\begin{equation}
\yv=\yv(t,\xv,\ov ,\ov ),\quad
\zv=\zv(t,\xv,\ov ,\ov ) 
\quad\text{such that}\quad
\dot{\xv}=A\xv+\Fv_c(t,\xv),
\label{eq:nfcmex}
\end{equation}
for  \((\xv,\ov ,\ov )\in d_\mu\) and \(t\in\TT_\mu\).
\end{full}

\subsection{The center manifold dynamics emerge}

Centre manifolds provide exceptionally powerful theory and techniques for modelling emergent dynamics in complex systems \cite[e.g.,][]{Potzsche08, Roberts2014a}.
This section establishes the crucial \cref{pro:stbackem} that for systems~\eqref{eq:stsexform}+\eqref{eqs:stsesdenf}, and for initial conditions in the centre-stable manifold, the solutions approach a solution on a {center manifold} exponentially quickly.
That is, the dynamics on the center manifold predict the dynamics of the full system apart from exponentially quickly decaying transients.

A departure from other extant theorems \cite[e.g.,][]{Carr81} is that we relax the `straightjacket' that solutions are required to remain in the neighbourhood of the reference equilibrium for all time (as also relaxed by \cite{Kobayasi03}).
The following lemma establishes that solutions are exponentially 
quickly attracted to the center manifold over a finite time---although preferable, all time is not required in this approach.
Thus even if some solutions exit the domain of validity of 
the center manifold model, we are empowered to use the center manifold model until they do so~exit.

\begin{full}
A corresponding such theorem should be especially useful in generalisations to stochastic dynamics. 
There the problem is that an inevitable rare event will eventually occur to push the stochastic system out of the domain of validity.
Similarly, \cite{Berglund03} do prove some theory up until the first exit time (see their Theorem~2.4).
Nonetheless, the almost certain eventual occurrence of such extremely rare events plagues and strongly constrains most established `forward' theorems on stochastic modelling.  
\end{full}

\begin{lemma}[emergent dynamics] \label{lem:nfemerge}
Consider the class of normal form systems~\cref{eqs:stsesdenf}, in a suitable domain~\(\DD_\mu\) (\cref{def:nftpd}).
For every initial condition $(\Xv_0,\Yv_0,\ov )\in \MM_{cs}$ and \((\Xv_0,\ov,\ov )\in \MM_{c}\) at \(t_0\in\TT_\mu\),\footnote{In the important case when there are no unstable modes---that is, when \(\Zv(t)\)~is absent---the center-stable manifold \(\MM_{cs}= \DD_\mu\).} solutions~$(\Xv(t),\Yv(t),\ov )$ of the normal form~\cref{eqs:stsesdenf} are exponentially quickly attracted to the solution~$(\Xv(t),\ov ,\ov )$ on the center manifold~\(\MM_c\) in the sense that $|(\Xv(t),\Yv(t))-(\Xv(t),\ov )|\leq c|\Yv_0|e^{-\mu t}$ for some constant~\(c\) and for all $t_0\leq t\leq T_\mu$ where the first exit time~$T_\mu$ is such that both $(\Xv(t),\Yv(t),\ov)\in \MM_{cs}$ and $(\Xv(t),\ov,\ov )\in \MM_{cs}$ for all $t\in [t_0,T_\mu)\subseteq\TT_\mu$.
\end{lemma}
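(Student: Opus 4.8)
The plan is to reduce the statement, by means of the exponential trichotomy of \cref{lem:exptri}, to the observation that inside the center-stable manifold the~\(\Xv\)~component of a solution evolves by exactly the center-manifold \ode, so that the two trajectories named in the statement genuinely carry the same function~\(\Xv(t)\) and the discrepancy collapses to~\(|\Yv(t)|\).

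First I would record forward invariance of~\(\MM_{cs}\): since \cref{eq:stsesdenfzz} is \(\dot{\Zv}=[C+H(t,\Xv,\Yv,\Zv)]\Zv\), the constant \(\Zv\equiv\ov\) solves it, so a solution started from \((\Xv_0,\Yv_0,\ov)\) keeps \(\Zv(t)=\ov\) for as long as it is defined and remains in~\(\DD_\mu\); this ``as long as'' is precisely the first exit time~\(T_\mu\) of the statement. Second, with \(\Zv(t)\equiv\ov\) the quadratic coupling \(\cF(t,\Xv,\Yv,\ov)\,\Yv\,\ov\) in \cref{eq:stsesdenfxx} vanishes, so the~\(\Xv\)~component of this center-stable solution obeys \(\dot{\Xv}=A\Xv+\Fv_c(t,\Xv)\), which is exactly the evolution on~\(\MM_c\). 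Third, the center-manifold solution through \((\Xv_0,\ov,\ov)\) solves the same \ode\ with the same value~\(\Xv_0\) at~\(t_0\); since \cref{ass:givensys} makes the nonlinearities \(\cC^\fp\) with \(\fp\ge 2\), hence locally Lipschitz, uniqueness of solutions of this \ode\ forces the two~\(\Xv\)~components to be literally the same function~\(\Xv(t)\) on their common interval of existence inside~\(\DD_\mu\)---which is why the statement legitimately uses a single symbol for both. Fourth, it then follows that \(\big|(\Xv(t),\Yv(t))-(\Xv(t),\ov)\big|=|\Yv(t)|\), and applying the first bound of \cref{lem:exptri}---valid exactly while the solution stays in~\(\DD_\mu\), that is for \(t\in[t_0,T_\mu)\)---gives \(|\Yv(t)|\le\cond Q\,|\Yv_0|\,e^{-\mu(t-t_0)}\) for \(t\ge t_0\). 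Finally I would absorb the fixed factor~\(e^{\mu t_0}\) into the constant, taking \(c:=\cond Q\,e^{\mu t_0}\), to arrive at the stated form \(c|\Yv_0|e^{-\mu t}\).

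I do not expect a serious obstacle: the analytic content is entirely carried by \cref{lem:exptri}, whose \(Q\)-weighted Lyapunov-function proof is already established, together with the classical uniqueness theorem for ordinary differential equations. The only points needing a little care are bookkeeping: being explicit that ``attraction'' is only asserted up to the first exit time~\(T_\mu\) from~\(\DD_\mu\), and checking that both the center-stable solution and its center-manifold shadow remain in~\(\DD_\mu\) over~\([t_0,T_\mu)\) so that \cref{lem:exptri} may legitimately be invoked on each. I would also note, for orientation, that when there are no unstable modes \(\MM_{cs}=\DD_\mu\), so the hypothesis \((\Xv_0,\Yv_0,\ov)\in\MM_{cs}\) then merely says the initial data lie in the domain, and that this lemma is the normal-form ingredient subsequently pulled back through the coordinate transform~\cref{eq:stsexform} to yield the emergence \cref{pro:stbackem} in the original physical variables.
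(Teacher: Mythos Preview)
Your proposal is correct and follows essentially the same route as the paper: observe that with \(\Zv\equiv\ov\) the \(\Xv\)-equation decouples from~\(\Yv\), so the center-stable solution and the center-manifold solution share the same \(\Xv(t)\), whence the discrepancy is \(|\Yv(t)|\) and \cref{lem:exptri} finishes. The only cosmetic difference is that the paper verifies directly that \((\Xv(t),\ov,\ov)\) is a solution (since \(\dot\Xv=A\Xv+\Fv_c(t,\Xv)\) is already satisfied and \(\Yv=\Zv=\ov\) trivially solve \cref{eq:stsesdenfyy,eq:stsesdenfzz}), whereas you reach the same identification via the uniqueness theorem; your handling of the factor \(e^{\mu t_0}\) is also slightly more careful than the paper's, which sets \(c:=\cond Q\) but writes the bound as \(e^{-\mu(t-t_0)}\).
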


\begin{example}[a cylinder of attraction] \label{eg:nfaca}
Consider the normal form autonomous system in center variables \(\Xv=(X_1,X_2)\) and stable variables \(\Yv=(Y_1,Y_2)\)
\begin{equation*}
\de t{\Xv}=\Fv(\Xv),\quad
\de t{\Yv}=\begin{bmatrix} -1&1\\-1&-1 \end{bmatrix}\Yv
+\begin{bmatrix} X_1^2Y_1-X_1X_2Y_2\\
-X_1X_2Y_1+X_2^2Y_2 \end{bmatrix}.
\end{equation*}
There are no unstable variables~\(\Zv\) in this example.
Here matrices determining the stable dynamics are
\begin{equation*}
B=\begin{bmatrix} -1&1\\-1&-1 \end{bmatrix}
\qtq{and}
G=\begin{bmatrix} X_1^2&-X_1X_2\\
-X_1X_2& X_2^2 \end{bmatrix}.
\end{equation*}
The linear matrix~\(B\) of the stable variables is already in real Jordan form, corresponding to eigenvalues \(-1\pm i\), so choose `diagonalising' matrix \(Q=I\) for which the condition number \(\cond Q=1\) (\cref{def:nftpd}).  
As the nonlinearity matrix~\(G\) is symmetric we find its \(2\)-norm from the largest eigenvalue: its two eigenvalues are zero and \(X_1^2+X_2^2\); the largest gives the norm \(\|G\|=X_1^2+X_2^2\).
The eigenvalues of~\(B\) are~\(\beta_j=-1\pm i\) so an upper bound on their real-part is \(-\beta=-1\)\,.
For every decay rate \(0<\mu<1=\beta\) the domain~\(\DD_\mu\) is then constrained by \(\mu<\beta-\cond Q\,\|G\|=1-(X_1^2+X_2^2)\).  
That is, domain~\(\DD_\mu\) is at least the cylinder \(X_1^2+X_2^2<1-\mu\) for all~\Yv.
\cref{lem:nfemerge} proves that while solutions stay within this specific cylindrical domain~\(\DD_\mu\), solutions decay to the center manifold \(\Yv=\ov \) through being bounded by \(|\Yv|\leq|\Yv_0|e^{-\mu t}\).
\begin{full}
Again, extant forward theory also gives such rates of decay in some neighbourhood.  But it is this backwards approach that gives a lower bound for the size of the neighbourhood.
\end{full}%
\end{example}

The following proof of \cref{lem:nfemerge} could be extended to establish some conditions for when the domain~\(\DD_\mu\) of emergence is much larger than that guaranteed by \cref{def:nftpd}.

\begin{proof}[Proof of \cref{lem:nfemerge}]
Given any solution~$(\Xv(t),\Yv(t),\ov)$ of~\cref{eqs:stsesdenf}, because the $\Xv$-equation~\cref{eq:stsesdenfxx} is independent of~$\Yv$ when \(\Zv=\ov\), we have that $(\Xv(t),\ov,\ov )$~is also a solution of the normal form~\cref{eqs:stsesdenf}.  
Further, let \(T_\mu=\sup\{T\in\TT_\mu: \forall t\in[t_0,T),\ (\Xv(t),\Yv(t),\ov),(\Xv(t),\ov,\ov)\in\DD_\mu\}\).
Then by the exponential trichotomy \cref{lem:exptri} the distance between them $|(\Xv,\Yv,\ov)-(\Xv,\ov,\ov )|=|\Yv|\leq c|\Yv_0|e^{-\mu (t-t_0)}$ for all $t_0\leq t< T_\mu$ and constant \(c:=\cond Q\).
\end{proof}

The previous \cref{lem:nfemerge} establishes a finite size 
domain in which a center manifold model emerges in time.
However, it only applies to systems in the special normal form~\cref{eqs:stsesdenf}.
\cref{pro:stbackem} uses the polynomial diffeomorphism of \cref{def:nfdd} to prove similar emergence in a wide class of dynamical systems.
\cref{thm:nfimex} establishes that there is a member of this wide class close to any specified dynamical system in an even wider and useful class.

\begin{proof}[Proof of \cref{pro:stbackem}]
To realise the exponential decay of distance between general solutions of the system~\cref{eqs:stsesde} and the center manifold solution, consider the trajectory starting from $(\xv_0,\yv_0,\zv_0)\in \cM_{cs}\subseteq d_\mu$ at time \(t_0\in\TT_\mu\): 
\begin{itemize}
\item it maps to the trajectory of~\cref{eqs:stsesdenf} starting from some point~$(\Xv_0,\Yv_0,\ov )\in D_\mu\subseteq \DD_\mu$; 
\item by \cref{lem:nfemerge}, this trajectory approaches exponentially quickly to the solution starting from~$(\Xv_0,\ov ,\ov )$; 
\item hence starting the model $\dot{\vec s}=A\vec s+\Fv_c(t,\vec s)$ with initial condition $\vec s(0)=\Xv_0$ gives the requisite solution on the center manifold~\(\cM_c\) approached by the trajectory from the specified initial condition.
\end{itemize}
The constant \begin{equation}
C:=\cond Q\,|\Yv_0|\Lip
\label{eq:bigC}
\end{equation}
where \(Q\)~is the similarity matrix introduced in the proof of \cref{lem:nfemerge}, and where $\Lip$ is a Lipschitz constant of the coordinate transform~\cref{eq:stsexform} on~\(\MM_{cs}\): $|(\xv(t,\Xv,\Yv,\ov ),\yv(t,\Xv,\Yv,\ov ))-(\xv(t,\Xv,\ov ,\ov ),\yv(t,\Xv,\ov ,\ov ))|\leq\Lip|\Yv|$ for $(\Xv,\Yv,\ov )\in \MM_{cs}\subseteq \DD_\mu$ and \(t\in\TT_\mu\)\,.
\end{proof}

\begin{full}
\begin{remark}
If one insists, in the coordinate transform~\cref{eq:stsexform}, that $\xv=\Xv$ when on the center manifold $\Yv=\ov $\,, then the center manifold may be more simply expressed as $\yv=\yv(\xv,\ov )$.
For example, to force $\xv=\Xv$ in the construction of a polynomial coordinate transform,  in~\eqref{eq:stsemux} set $\fx=0$ and always update the evolution with non-zero~\(\ff\).
\end{remark}
\end{full}

\begin{corollary}[center-unstable dynamics] \label{cor:cud}
The previous \cref{lem:nfemerge,pro:stbackem} immediately also apply to the center-unstable manifold of system~\cref{eqs:stsesde} when considered backward in time.
\end{corollary}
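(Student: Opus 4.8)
The plan is to invoke the time-reversal symmetry of the normal form system~\cref{eqs:stsesdenf} and the fact that \cref{lem:nfemerge} and \cref{pro:stbackem} were proved purely from the exponential trichotomy \cref{lem:exptri}, which is itself symmetric under \(t\mapsto-t\) together with the exchange of the roles of the stable variables~\(\Yv\) and the unstable variables~\(\Zv\). First I would observe that if \((\xv(t),\yv(t),\zv(t))\) solves~\cref{eqs:stsesde} then the time-reversed trajectory \(\big(\xv(-t),\yv(-t),\zv(-t)\big)\) solves the system obtained from~\cref{eqs:stsesde} by the substitutions \(t\mapsto-t\), \(A\mapsto-A\), \(B\mapsto-C\), \(C\mapsto-B\), together with the corresponding sign changes and argument reflections in \(\fv,\gv,\hv\). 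Under this substitution \cref{ass:givensys} is preserved: \(-A\) still has eigenvalues with \(|\Re\cdot|\leq\alpha\); \(-C\) has eigenvalues with real part \(\leq-\beta<0\), so it plays the role of~\(B\); and \(-B\) has eigenvalues with real part \(\geq\beta>0\), so it plays the role of~\(C\); the spectral gap condition and the \(\cC^\fp\) smoothness are untouched. Hence the reversed system is again of the class to which \cref{thm:stsenfp,thm:nfimex,lem:nfemerge,pro:stbackem} apply, and its center-stable manifold is exactly the center-unstable manifold of the original system read backward in time.

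Next I would carry through the bookkeeping: applying \cref{lem:nfemerge} to the reversed normal form system~\cref{eqs:stsesdenf} (with the tilde-version of the matrices) gives, for every initial condition \((\Xv_0,\ov,\Zv_0)\in\MM_{cu}\) at a time \(t_0\in\TT_\mu\), a reversed-time solution \((\Xv(s),\ov,\Zv(s))\) that is attracted, as \(s\) increases, exponentially quickly to the center manifold solution \((\Xv(s),\ov,\ov)\in\MM_c\), with the bound \(|(\Xv(s),\Zv(s))-(\Xv(s),\ov)|\leq c|\Zv_0|e^{-\mu s}\). Translating \(s=-t\) back to forward time, this states that the center-unstable manifold dynamics of the original system is attractive as \(t\to-\infty\): trajectories in \(\MM_{cu}\) converge backward in time, at rate~\(\mu\), to a trajectory on the center manifold, over the finite time interval \((T_\mu,t_0]\) dual to the first-exit interval of \cref{lem:nfemerge}. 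The proof of \cref{pro:stbackem} then transfers verbatim, with the Lipschitz constant of the coordinate transform~\cref{eq:stsexform} restricted to~\(\MM_{cu}\) replacing that restricted to~\(\MM_{cs}\), and the constant~\(C\) of~\cref{eq:bigC} now built from \(\cond R\,|\Zv_0|\Lip\) rather than \(\cond Q\,|\Yv_0|\Lip\), because it is the \(R\)-norm bound on~\(\Zv\) in \cref{lem:exptri} that is in play. One only needs to note that \cref{thm:stsenfp} already guarantees the normal form maintains the decoupled form~\cref{eq:stsesdenfzz} for the unstable block, which is precisely what makes the reversed argument symmetric to the stable-block argument.

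The main obstacle, such as it is, is purely organisational rather than mathematical: one must check that the domain restrictions behave correctly under reversal. Specifically, the emergence-preserving domain~\(\DD_\mu\) of \cref{def:nftpd} for the reversed system is controlled by \(\cond(R)\|H(t,\Uv)\|\) in place of \(\cond(Q)\|G(t,\Uv)\|\); since~\cref{eqs:stsesdenf} treats~\(G\) and~\(H\) symmetrically (the constraint is \(\mu\pm\delta<\beta-\max\{\cond(Q)\|G\|,\cond(R)\|H\|\}\)), the same~\(\DD_\mu\) works, and the diffeomorphic domain~\(d_\mu\) of \cref{def:nfdd} is unchanged because~\cref{eq:stsexform} is the same near-identity transform. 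I would therefore state the corollary's proof in a single short paragraph: remark that \cref{lem:exptri} is invariant under \(t\mapsto-t\) combined with \(\Yv\leftrightarrow\Zv\), \(B\leftrightarrow C\), \(Q\leftrightarrow R\); deduce that \cref{lem:nfemerge} holds with \(\MM_{cs}\) replaced by \(\MM_{cu}\), \(\Yv\) by \(\Zv\), and forward time by backward time; and then observe that the proof of \cref{pro:stbackem} uses only \cref{lem:nfemerge} and the Lipschitz property of~\cref{eq:stsexform}, both of which survive the reversal, so the analogous estimate~\cref{eq:stbackem} with \(e^{-\mu t}\) replaced by \(e^{\mu t}\) (equivalently, \(e^{-\mu|t-t_0|}\) on the dual interval) holds on the center-unstable manifold. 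This completes the argument with no new calculation.
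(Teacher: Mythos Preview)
Your proposal is correct and is essentially the time-reversal argument that the paper leaves implicit by the word ``immediately'' (the paper gives no separate proof for this corollary). Your only slight imprecision is notational---writing ``\(B\mapsto-C\), \(C\mapsto-B\)'' conflates the raw sign change from \(t\mapsto-t\) with the subsequent relabelling \(\Yv\leftrightarrow\Zv\)---but the mathematics and the domain/constant bookkeeping you carry through are exactly right.
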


\subsection{Singular perturbation dynamics emerge instantaneously}
\label{sec:nfssppei}

Many researchers choose to phrase problems as singular perturbations \cite[e.g.,][]{Bykov2013, Pavliotis07, Verhulst05}.
For problems phrased as singular perturbations, this section establishes a new backwards view of how slow manifolds exist and `instantly' emerge over large finite domains.

Let's consider the class of autonomous singular perturbation dynamics governed by
\begin{equation}
\dot{\uv }=\cF(\uv ,\vec v) \qtq{and}
\dot{\vec v}=\frac1\epsilon\cG(\uv ,\vec v),
\label{eq:nfspp}
\end{equation}
for \(\uv (t)\in\RR^m\), \(\vec v(t)\in\RR^n\) and the regime where parameter~\(\epsilon\) is small.
The heuristic singular perturbation argument is that as parameter \(\epsilon\to0\) the \(\vec v(t)\)~dynamics are very fast and will rapidly settle onto an `equilibrium' of the \(\vec v\)-\ode.%
\footnote{We restrict attention to this scenario of rapid attraction of fast variables~\(\vec v\) to an equilibrium.
In other singular perturbation scenarios the fast variable~\(\vec v\) is rapidly attracted to an invariant distribution reflecting either rapid oscillations, chaos, or stochasticity in the \(\vec v\)-\ode\ \cite[e.g.,][]{Berglund03}.} 
Hence solving \(\cG(\uv ,\vec v)=\ov \) gives quasi-equilibria \(\vec v=\cV(\uv )\) parametrised by the `frozen' slow variable~\(\uv \).
Then the argument is that the slow variables are not truly frozen but instead evolve according to the \(\uv \)-\ode, namely \(\dot{\uv }\approx\cF(\uv ,\cV(\uv ))\).
Indeed some beautiful theorems \cite[e.g.,][]{Pavliotis07, Verhulst05} establish the slow manifold model that
\(\vec v=\cV(\uv )+\Ord\epsilon\) {such that}
\(\dot{\uv }=\cF(\uv ,\cV(\uv ))+\Ord\epsilon\).
Let's view this scenario using our normal form coordinate transformations.
In particular, and in contrast to the common singular perturbation theory, we do not use the limit `\(\epsilon\to0\)' but treat parameter~\(\epsilon\) as a fixed finite value, albeit notionally small in effect.
In applications, \(\epsilon\)~is almost always \text{finite valued.}

To establish our coordinate transform view, let's choose to embed the original singular~\cref{eq:nfspp} as the \(\theta=1\) member of the family of systems
\begin{equation}
\dot{\uv }=\theta\cF(\uv ,\vec v) \qtq{and}
\dot{\vec v}=\frac1\epsilon\cG(\uv ,\vec v),
\label{eq:nfsppp}
\end{equation}
for homotopy parameter~\(\theta\), for at least \(0\leq\theta\leq1\)\,.
Then we analyse this family, and set parameter \(\theta=1\) to recover results about the original~\cref{eq:nfspp}.
One might imagine parameter~\(\theta\) is a `temperature' in that when \(\theta=0\) the slow variables~\(\uv \) are `frozen', but when \(\theta=1\) the system has `warmed' to become the original `out-of-equilibrium', finite~\(\epsilon\), system.

Now proceed along familiar lines.
First find equilibria: 
the system~\cref{eq:nfsppp} has a manifold of equilibria for parameter \(\theta=0\) and \(\vec v^*=\cV(\uv ^*)\).
Second, change to coordinates local to each equilibria.
We introduce new slow and fast variables, familiarly called \(\xv\) and~\(\yv\), according to the linear transformation
\begin{equation*}
\begin{bmatrix} \uv \\\vec v \end{bmatrix}
=\begin{bmatrix} \uv ^*\\\cV(\uv ^*) \end{bmatrix}
+\begin{bmatrix} I&0\\ L&I \end{bmatrix}
\begin{bmatrix} \xv\\\yv \end{bmatrix}
\end{equation*}
for \(\xv(t)\in\RR^m\) and \(\yv(t)\in\RR^n\). 
The submatrix \(L(\uv ^*):=-\left[\D{\vec v}\cG\right]^{-1}\D{\uv }\cG\) evaluated at \((\uv ^*,\cV(\uv ^*))\).
This choice for the submatrix~\(L(\uv ^*)\) ensures the system~\cref{eq:nfsppp} becomes linearly separated:
\begin{equation}
\dot{\xv}=\theta\fv(\xv,\yv) \qtq{and}
\dot{\yv}=\frac1\epsilon B\yv+\frac1\epsilon\gv(\xv,\yv),
\label{eq:nfspxy}
\end{equation}
where matrix \(B(\uv ^*):=\D{\vec v}\cG\) evaluated at \((\uv ^*,\cV(\uv ^*))\),  and the function \(\fv(\xv,\yv):=\cF(\uv ^*+\xv,\cV(\uv ^*)+L\xv+\yv)\) is also implicitly a function of~\(\uv ^*\), as is the function \(\gv(\xv,\yv):=\cG(\uv ^*+\xv,\cV(\uv ^*)+L\xv+\yv)-B\yv-\epsilon\theta\fv(\xv,\yv)\) which is also implicitly a function of the small product~\(\epsilon\theta\) (only the leading dependence in parameters~\(\theta\) and small~\(\epsilon\) is explicit).
The function~\(\gv=\Orc2\) by the choice of~\(L\) and~\(B\).
Further, functions \(\fv\) and~\(\gv\) are as smooth as \(\cF\) and~\(\cG\) in the corresponding domains. 
We also require that in the domain, \(\det(\D{\vec v}\cG)\) be bounded away from zero as is consistent with the singular perturbation assumption that the fast variables~\(\vec v\) evolve rapidly to an quasi-equilibrium. 
In the linearly separated form~\cref{eq:nfspxy} (with no unstable variables~\zv) we readily apply the results of the previous sections.

To quantify the separation of time scales, suppose all eigenvalues of~\(B\) have negative real-part bounded away from zero: \(\Re\beta_j\leq-\beta^*<0\) (\(\beta^*\)~depends upon~\(\uv ^*\)): thus the bound~\(\beta\) invoked in previous sections is here \(\beta^*/\epsilon\) (large since \(\epsilon\)~is small). 
Based about the equilibria \(\theta=0\), the linear matrix for the slow variables~\(\xv\) is zero, with eigenvalues that are zero so \(\alpha=0\).%
\footnote{Strictly, here \Orc{p} denote terms \Ord{\theta^{p/2}+|\Xv|^p+|\Yv|^p+|\Zv|^p} as $(\theta,\Xv,\Yv,\Zv)\to\ov $\,.}
\cref{sec:stsenfsmsde}, via \cref{thm:stsenfp}, establishes the existence of coordinate transforms which together with the normal form
\begin{equation}
\dot{\Xv}=\theta\Fv(\Xv) \qtq{and}
\dot{\Yv}=\frac1\epsilon B\Yv+\frac1\epsilon G(\Xv,\Yv)\Yv,
\label{eq:nfspXY}
\end{equation}
gives a system asymptotically close to the original~\cref{eq:nfspxy} to any specified order (an order limited only by the smoothness of~\cF\ and~\cG\ in the chosen domain).
By rescaling time with~\(\epsilon\), one can see that the coordinate transform and the normal form~\cref{eq:nfspXY}, apart from the explicit factors shown above, depend upon parameters \(\epsilon\) and~\(\theta\) only via the small product~\(\epsilon\theta\).

Now determine the domain of emergence from the results of \cref{sec:nfeep}.
We need to choose a rate parameter \(\mu^*<\beta^*/\epsilon\), say choose \(\mu^*:=\beta^*/\sqrt\epsilon\).
Then for the normal forms~\cref{eq:nfspXY} the star-shaped domain~\(\DD_\mu^*\) must satisfy \(\mu^*<\beta^*/\epsilon-\cond Q^*\,\|G\|/\epsilon\); that is, \((\Xv,\Yv)\in\DD_\mu^*\) must satisfy
\begin{equation*}
  \|G(\Xv,\Yv)\|<\frac{\beta^*(1-\sqrt\epsilon)}{\cond Q^*} 
  \,.
\end{equation*}
In contrast, extant forward theory does not provide such a bound on the domain size.%
\footnote{If the limit \(\epsilon\to 0\) is taken, then this bound \({\beta^*(1-\sqrt\epsilon)}/{\cond Q^*} \to {\beta^*}/{\cond Q^*}\).}
The superscript~\(*\) on quantities indicates that they depend upon the location~\(\uv ^*\) of the base equilibria of the analysis; for example, \(\DD_\mu^*:=\DD_\mu(\uv ^*)\).
As \(G=\Orc1\), these local domains~\(\DD_\mu^*\) exist for every value of the singular perturbation parameter~\(\epsilon<1\).
Further, the domains exist at homotopy parameter~\(\theta=1\) for  small enough~\(\epsilon\) as the homotopy parameter only occurs in the combination~\(\epsilon\theta\).
Define the domain \(\DD_\mu:=\bigcup_{\uv ^*}\DD_\mu^*\), global over~\(\uv^*\), which also contains \(\theta=1\) for small enough~\(\epsilon\).
Since the attractiveness of the slow manifold is ensured inside each~\(\DD_\mu^*\), the slow manifold is attractive in the union~\(\DD_\mu\).
Letting \(\beta:=\min_{\uv ^*}\beta^*\) and \(\mu:=\beta/\sqrt\epsilon\), \cref{lem:nfemerge} asserts all solutions of the normal form~\cref{eq:nfspXY} in the union~\(\DD_\mu\) are attracted to a slow manifold solution at least as fast as \(\Ord{e^{-\mu t}}=\Ord{e^{-\beta t/\sqrt\epsilon}}\).
That is, if one invokes the limit as the singular perturbation parameter \(\epsilon\to 0\), then this attraction happens  `instantaneously' in time.

But how does this existence and attraction translate to dynamics~\cref{eq:nfspp} in the original slow\slash fast variables~\((\uv ,\vec v)\)?
We proceed via the linearly transformed dynamics~\cref{eq:nfspxy} of the local variables~\((\xv,\yv)\).
The coordinate transforms to/from variables~\((\Xv,\Yv)\) are near identity polynomial, and so are $\cC^p$-diffeomorphism in some domain \(D_\mu\subseteq \DD_\mu\).
As in \cref{def:nfdd}, let the domain \(d_\mu\subseteq\RR^{m+n}\) be the image of~\(D_\mu\) under the coordinate transform.
Since the coordinate transform is near identity, and depends upon \(\epsilon\) and~\(\theta\) only in the combination~\(\epsilon\theta\), the physically relevant case of parameter \(\theta=1\) lies in domain~\(d_\mu\) for small enough~\(\epsilon\).
The linear transformation then maps the domain~\(d_\mu\) into the original variables~\((\uv ,\vec v)\).
Thus we are assured that there is a domain, global across the set of equilibria~\((\uv ^*,\vec v^*)\) found at \(\epsilon=0\) (provided \(\D{\vec v}\cG\) has eigenvalues with real-part bounded away from zero), in which a slow manifold exists and in which all solutions are attracted exponentially quickly, at least as fast as~\Ord{e^{-\mu t/\sqrt\epsilon}}, to solutions \text{on the slow manifold.}

This coordinate transform view connects to the existence and rapid emergence of slow manifolds in for the many problems phrased as singular perturbation.

\section{Conclusion}

This article establishes a new foundation in a complementary theory of invariant manifolds for non-autonomous dynamical systems.
Results on the existence and emergence of center manifolds, \cref{cor:stbackex,pro:stbackem}, are based upon being theoretically able to construct (\cref{thm:stsenfp}) an approximate normal form~\cref{eqs:stsesdenf} corresponding to any prescribed system~\cref{eqs:stsesde}.
In the normal form system we readily identify invariant manifolds (\cref{def:nfnfim}) within a finite domain (\cref{def:nftpd,lem:nfftd}) with an associated exponential trichotomy (\cref{lem:exptri}) for at least a  finite-time.
Consequently, we deduce that the normal form dynamics in its center-stable manifold is exponentially quickly attracted to its center manifold (\cref{lem:nfemerge}).
By transforming back~\cref{eq:stsexform} from the normal form~\cref{eqs:stsesdenf} we establish the existence and emergence of invariant manifolds (\cref{cor:stbackex,pro:stbackem}) in a finite domain for many systems `arbitrarily close in an asymptotic sense' to the specified system~\cref{eqs:stsesde}. 
Future research is planned on the outstanding issue of how to quantify closeness in terms of norms rather than being simply asymptotic.
Another outstanding challenge is to establish such quantitative information direct from the algebraic form of the original `physical' system~\cref{eqs:stsesde}.

The non-autonomous theory developed here is based upon on the construction of a coordinate transform conjugacy (\cref{sec:stsenfsmsde}).
This construction requires the existence of the convolution integrals~\cref{eq:stsezmuf}.
This requirement is only a weak constraint on the non-autonomous nature of the dynamical system.
Consequently, the arguments developed here should also apply to many stochastic systems provided the sufficient ordinary rules of calculus hold such as in the Stratonovich interpretation \cite[e.g.,][]{vanKampen81}, or potentially the Marcus interpretation for jump processes \text{\cite[e.g.,][]{Chechkin2014}.}

The other innovation in this complementary approach is that the defining properties of the invariant\slash integral manifolds (\cref{def:nfnfim}) do not need as many restrictions on linear operators as that required by forward theory.
Consequently, future research should be able to develop the approach to establish center manifold theory for a wider range of partial differential systems than is currently available in applications \cite[e.g.,][]{Hochs2019}.
Although many technical details remain to be resolved including how to measure distances between \pde\ systems.

\paragraph{Acknowledgements}
I thank Profs.\ Georg Gottwald and C.~Chicone for interesting conversations on cognate issues, and thank reviewers for their constructive comments.
This project was partly supported by the Australian Research Council through grants DP120104260, DP150102385, and DP200103097.

\end{document}